\theoremstyle{plain}
\newtheorem{thm}{Theorem}
\newtheorem{prop}[thm]{Proposition}
\newtheorem{conj}[thm]{Conjecture}
\newtheorem{cor}[thm]{Corollary}
\newtheorem{lemma}[thm]{Lemma}
\theoremstyle{definition}
\newtheorem{definition}[thm]{Definition}
\newtheorem{remark}[thm]{Remark}
\newcommand{\field}[1]{\mathbb{#1}}
\newcommand{\Z}{\field{Z}}
\newcommand{\C}{\field{C}}
\newcommand{\sgn}{\operatorname{sgn}}
\newcommand{\im}{\operatorname{Im}} 
\newcommand{\re}{\operatorname{Re}} 
\newcommand{\img}{\operatorname{im}} 
\newcommand{\tr}{\operatorname{tr}} 
\newcommand{\qdim}[1]{\operatorname{qdim}\left[#1\right]}
\renewcommand{\hom}{\operatorname{Hom}}
\newcommand{\dd}{{d}} 
\newcommand{\SmatMM}[2]{S^{\operatorname{Vir}}_{(#1),(#2)}} 
\newcommand{\SmatFT}[3]{S^{#1}_{#2}\left(#3\right)} 
\newcommand{\SingVOA}[1]{\mathcal{M}(#1)} 
\newcommand{\KacT}[1]{\mathcal{T}_{#1}} 
\newcommand{\bea}{\begin{eqnarray}}
\newcommand{\eea}{\end{eqnarray}}
\newcommand{\be}{\begin {equation}}
\newcommand{\ee}{\end{equation}}
\newcommand{\W}{\mathcal W}
\newdimen{\Virwidth}
\newcommand{\FF}[1]{\mathcal{F}_{#1}}          
\newcommand{\SingKer}[1]{\mathcal{K}_{#1}}  
\newcommand{\SingIm}[1]{\mathcal{I}_{#1}}   
\newcommand{\SingImP}[1]{M_{#1}}   
\newcommand{\VirIrr}[1]{\mathcal{L}_{#1}}   
\newcommand{\chmap}{\mathrm{ch}}
\newcommand{\ch}[1]{\chmap \bigl[ #1 \bigr]}     
\newcommand{\thetaf}[2]{\theta_{#1}\left(#2\right)}    
\newcommand{\ptheta}[2]{P_{#1}\left(#2\right)}         
\newcommand{\regptheta}[3]{P^{#1}_{#2}\left(#3\right)} 
\newcommand{\scr}[1]{\mathcal{Q}_{#1}}          
\newcommand{\scrs}[2]{\mathcal{Q}_{#1}^{[#2]}}  
\newcommand{\strip}[1]{\mathbb{S}(#1)}          
\begin{document}

\title{On Regularised Quantum Dimensions of the Singlet Vertex Operator Algebra and False Theta Functions}

\author{Thomas Creutzig, Antun Milas and Simon Wood} 

\thanks{T.C. was supported by an NSERC Research Grant (RES0020460).}
  \thanks{ A.M. was  supported by a Simons Foundation grant.}
  \thanks{S.W. was
  supported by an ARC Discovery Early Career Researcher Award (DE140101825)}

 \maketitle

\begin{abstract}

We study a family of non-\(C_2\)-cofinite vertex operator algebras, called the
singlet vertex operator algebras, and connect several important concepts
in the theory of vertex operator algebras, quantum modular forms,  and modular tensor categories. 
More precisely, starting from explicit formulae for characters of modules over the singlet
vertex operator algebra, which can be expressed in terms of false theta functions and
their derivatives, we first deform these characters by using a complex parameter 
$\epsilon$.  We then apply modular transformation properties of regularised
partial theta functions to study asymptotic
behaviour of regularised characters of irreducible modules and compute their regularised quantum dimensions.
We also give a purely geometric description of the 
regularisation parameter as a uniformisation parameter of the fusion variety coming from atypical blocks.
It turns out that the quantum dimensions behave very differently 
depending on the sign of the real part of \(\epsilon\). 
The map from the space of characters equipped with the Verlinde product to the space of regularised
quantum dimensions turns out to be a genuine ring isomorphism for positive
real part of \(\epsilon\)
while for sufficiently negative real part of \(\epsilon\) its surjective image 
gives the fusion ring of a rational vertex operator algebra. 
The category of modules of this rational vertex operator algebra should be viewed as obtained
through the process of a semi-simplification procedure widely used in the theory of quantum groups. 
Interestingly, 
the modular tensor category structure constants of this vertex operator algebra, can be also detected from vector valued quantum modular forms formed by distinguished atypical characters. 

\end{abstract}

\section{Introduction}

Vertex operator algebras and modular objects are intimately connected through the study of their graded dimensions  of modules, also known as characters. 
If the vertex operator algebra is rational, and satisfies a certain cofiniteness condition, this connection is well known, while in the non-rational case the situation is far from being understood. 
Let us briefly recall the rational story. 

\subsection{Quantum dimensions and the Verlinde formula of rational vertex operator algebras }

In the late 80's 
Verlinde conjectured an intriguing relation between the modular properties of characters of a
certain two-dimensional conformal field theory and its fusion ring \cite{V}. Shortly after, Moore and Seiberg explained how this relation arises from the axioms of rational
conformal field theory \cite{MS}. The algebraic axiomatisation of (chiral) two-dimensional conformal field theory is 
given by the representation theory of vertex operator algebras. 
The important question was thus to understand precisely the connection between
modular forms and rational vertex operator algebras on a mathematical level of rigour. 
In the early 90's, Zhu \cite{Z} proved that the space of torus one
point functions, and especially characters of modules, of a rational vertex
operator algebra satisfying the $C_2$-cofiniteness condition, is finite
dimensional and carries an action of the modular group $SL(2; \Z)$.  Finally,
in 2005, Huang \cite{Hu2} proved that the representation category of a rational
$C_2$-cofinite vertex operator algebra (with some additional mild properties)
has the structure of a modular tensor category. 

Equivalence classes of simple
objects in a modular tensor category carry a projective action of the modular
group and it turns out that this categorical projective $SL(2; \Z)$ action
descends to the modular one. An important property of a modular tensor
category is that the Verlinde formula holds (see e.g. the book \cite{T}). 
Let $V$ be a rational vertex operator algebra with simple modules  $M_0=V, M_1, \dots, M_n$, then the characters are the graded (by conformal dimension) trace functions
\begin{displaymath}
\ch{M_i}(\tau):= \tr_{M_i}\left( q^{L_0-\frac{c}{24}}\right),
\end{displaymath}
 with $q=e^{2\pi i\tau}$ and $c$ the central charge of $V$. The modular $S$-transformation defines a $(n+1)\times (n+1)$ matrix $S_{ij}$ via
\begin{displaymath}
\ch{M_i}\left(-\frac{1}{\tau}\right) = \sum_{j=0}^n S_{ij} \ch{M_j}(\tau).
\end{displaymath}
The Verlinde formula (proven in \cite{Hu}) explicitly computes the fusion coefficients in terms of
the $S$ matrix coefficients. On the space of equivalence classes
of modules $[M_i]$, the product
\begin{displaymath}
[M_i] \times [M_j] = \sum_{k=0}^n{N_{ij}}^k  [M_k]
\end{displaymath}
is given by 
\begin{displaymath}
{N_{ij}}^k = \sum_{\ell=0}^n \frac{S_{i\ell}S_{j\ell}\left(S^{-1}\right)_{\ell k}}{S_{0 \ell}}.
\end{displaymath}
This formula implies that for each $r=0,\dots, n$ the map
\begin{displaymath}
[M_i] \mapsto \frac{S_{ir}}{S_{0 r}}
\end{displaymath}
is a ring homomorphism. 
The numbers $\frac{S_{ir}}{S_{0 r}}$ are called generalised quantum dimensions of the simple object $M_i$, while 
for $r=0$ they are just called quantum dimensions.
Let us assume that the vacuum vector is the element of least conformal dimension of all simple modules of $V$, then the corresponding term dominates in the $q\rightarrow 0^+$ limit and hence 
the categorical dimension $\frac{S_{i0}}{S_{0 0}}$
satisfies
\begin{equation} \label{qdim-def}
\frac{S_{i 0}}{S_{0 0}} =
\lim_{\tau\rightarrow 0^+} \frac{\ch{M_i}\left({\tau}\right) }{\ch{M_0}\left({\tau}\right) },
 \end{equation}
where \(\tau\) approaches 0 from the upper half plane along the imaginary axis.
This relation between quantum dimensions and asymptotic behaviour of characters was first noted in \cite{DV}. 
For a thorough discussion of quantum dimensions in rational vertex operator algebras see \cite{Hu2} and \cite{DJX} (see also \cite{M} and \cite{BM}), where a striking relation to indices of subfactors is also pointed out. 

\subsection{Modularity  and non-$C_2$-cofinite vertex operator algebras}

Non-rational vertex operator algebras can further be subdivided into two classes:
$C_2$-cofinite and non-$C_2$-cofinite vertex operator algebras. $C_2$-cofiniteness is a condition that implies
that the vertex operator algebra has only finitely many simple modules, up to isomorphism. Even though there is no general Verlinde-type formula
for $C_2$-cofinite vertex operator algebras,  results of Miyamoto \cite{Miy} guarantee, under the additional assumption that all modules are infinite dimensional, that at least the space of characters, 
supplemented by pseudo characters, is modular invariant. The $(p_+, p_-)$-triplet $W$-algebra is the best
understood family of this type \cite{AdM1,AdM2,FGST1,FGST2,Kau,GK,NT,TW}.
In these cases, irreducible characters and their modular properties have been
computed. They are sums of vector valued modular forms of weight
zero, one and two. So in some sense we get a picture 
that resembles the rational case. For the $(1,p)$-triplet algebra there is
even a proposal for a generalised Verlinde formula indirectly obtained from
the action of $SL(2; \Z)$ on characters \cite{FHST}.

Verlinde-type algebras in the case of non-rational, non $C_2$-cofinite vertex operator
algebras have been studied by the authors and D. Ridout
\cite{CR1,CR2,CR3,CM,RW}. These vertex operator
algebras are expected to have uncountably many isomorphism classes of
modules. The best known example is the rank one Heisenberg vertex operator algebra.
The modular $S$-transformation of Heisenberg characters defines an integral
kernel, the $S$-kernel, in terms of which the Verlinde formula can be expressed.
This is explained in great detail in Section 1.2 of \cite{CR3}. 

One general feature of these irrational theories is that they admit typical
modules (labelled by a continuous parameter) and atypical modules
(parametrised by a discrete set).  When it comes to modular transformation
properties, the $S$-transformation on the character should produce both typical and atypical characters.
So we expect
\begin{equation} \label{cont-S}
\ch{M}(-1/\tau) = \underbrace{ \int_{\Omega} S_{M,\nu} \ch{M_{\nu}}(\tau) d \nu }_{{\rm continuous  \ \ part}} +\underbrace{ \sum_i \alpha_{M,i} \ch{M_i} (\tau)}_{{\rm discrete \ part}},
\end{equation}
where $\ch{M_i}$ are certain atypical characters and $S_{M,\nu}$ is the $S$-kernel defined on some domain $\Omega$ parametrising typicals.
The reader should exercise caution here as the above relation might only exist in a distributional setting (see for instance \cite{CR1}, \cite{CR2}, \cite{RW}); however, see also below. 
In the case of the Heisenberg vertex operator algebras, only the continuous Gaussian integral appears as there are no atypical modules.
This is no surprise as the representation category of the Heisenberg algebra is semi-simple under suitable conditions on modules. Other more complicated examples of non $C_2$-cofinite vertex operator algebras include 
reducible but indecomposable modules. 
In many examples interesting modular-like objects appear in characters.  For
example, mock modular forms appear in the atypical modules of many super
vertex operator algebras due to important work of Kac and Wakimoto, see
e.g. \cite{KW1}. In \cite{AC}, the mock modular properties were related
to the fusion ring of a family of such theories. Meromorphic Jacobi forms of
both positive and negative index also appear as the analytic continuations of
highest-weight module characters of affine vertex operator algebras at admissible,
rational level \cite{KW2}. Here the relation of modular transformations to the
Verlinde algebra is subtle as one has to work with the literal trace
functions, that is, series expansions where the coefficients are dimensions of
weight spaces and not their analytic continuations, see \cite{CR1, CR2}. 

The case we are interested in here concerns certain vertex operator algebras whose atypical characters are essentially determined by certain false theta functions
and their derivatives. A classical {\em partial} theta series is the theta like sum  given by 
\begin{displaymath}
P_{a,b}(q) = \sum_{m=0}^\infty q^{a(m+\frac{b}{2a})^2},\quad a,b \in\mathbb{N}.
\end{displaymath}
A \emph{false} theta function is a difference of two partial
theta functions with fixed $a$. These objects appear in connection to representation theory \cite{BM,CM}, mock modular forms, and as Fourier coefficients of negative index meromorphic Jacobi forms \cite{BCR}.
These appearances are related as the $\SingVOA{p}$ singlet algebras are the coset vertex operator algebras of a family of W-algebras whose atypical
characters can be analytically continued to meromorphic Jacobi forms and their Fourier coefficients are essentially
singlet algebra characters \cite{CRW}.
The definition of false theta functions can also be modified to higher ranks \cite{BM2}. 

\subsection{The $\epsilon$-plane and quantum dimensions of the singlet vertex operator algebra}

In this paper we are primarily concerned with the $\SingVOA{p_+,p_-}$ singlet
algebra, which is defined as the intersection of the kernels
of two screening operators. 
Additionally we will also present results for the $\SingVOA{p}$ singlet algebra studied previously in \cite{CM}.

As previously shown in \cite{CM}, a modular group action on $\SingVOA{p}$ atypical
characters only exists, in the sense of (\ref{cont-S}) being a genuine
equality of functions rather than distributions, if we introduce a regularisation 
parameter $\epsilon$ and view $\ch{M^\epsilon}(\tau)$ as a function of both $\epsilon$
and $\tau$. There are several reasons why regularisation is useful. For example,
it allows us to distinguish each irreducible module by its regularised character. 
Moreover, if  we require $\epsilon \in \mathbb{C} \setminus i \mathbb{R}$, then atypicals transform as
\begin{equation} \label{cont-S1p}
\ch{M^\epsilon}(-1/\tau) = \underbrace{ \int_{\mathbb{R}} S^\epsilon_{M,\nu}
  \ch{\mathcal{F}^\epsilon_{\nu}}(\tau) d \nu }_{{\rm continuous\ part}}
+\underbrace{ G_{M}^\epsilon(\tau)}_{\substack{\rm discrete\\ \rm correction\ part}},
\end{equation}
where $\mathcal{F}_\nu$ are Fock modules and $S^\epsilon_{M,\nu}$ is now the regularised $S$-kernel. 
Our first main result gives such a formula for the characters of atypical
$\SingVOA{p_+,p_-}$ and $\SingVOA{p}$ modules including 
explicit formulae for the correction term expressed in theta functions (see
Proposition \ref{prop:S-trafo} together with equation \eqref{eq:charI+} and Theorem
\ref{S-atypical}).  A similar formula, but without correction term or
regularisation parameter, was found in \cite{RW} by considering characters as
algebraic distributions. The regularisation scheme considered in this article has
the advantage of seeing the correction term which gives rise to the
rich structure discussed below as well as to a parametrisation of the fusion variety discussed below.
In contrast to the $\SingVOA{p}$ vertex operator algebra
this term does {\em not} vanish when $\epsilon$ goes to zero. 

Keeping in mind formula (\ref{qdim-def}), which is valid for rational models,
we {\em define} the regularised quantum dimension of $M$ to be
\begin{displaymath}
\qdim{M^\epsilon}=\lim_{\tau \rightarrow 0^+} \frac{\ch{M^\epsilon}(\tau)}{\ch{V^\epsilon}(\tau)},
\end{displaymath}
where $V$ is the vertex operator algebra; in our case $\SingVOA{p_+,p_-}$ or
\(\SingVOA{p}\).
As was shown in \cite{BM}, (non-regularised) quantum dimensions 
should enjoy nice properties even beyond rational vertex operator algebras.

The aim of this article is to compute a conjectural formula for the fusion
ring of the singlet algebra by means of regularised quantum dimensions.
While computing these formulae, we also obtain many analytic results which are
purely number theoretic in nature, for example, asymptotic properties of
quotients of characters and modular-like transformation 
properties of ``weight'' $\frac{3}{2}$ false theta functions.
What is interesting is that the quantum dimensions behave very differently in the
two disjoint domains separated by the ``wall''\footnote{
Of course, one can also use $i \epsilon$ as a regularisation parameter instead of
$\epsilon$.  This leads to the disjoint domains
$\im(\epsilon)>0$ and $\im(\epsilon)<0$, respectively.}:
\begin{displaymath}
B_\epsilon^q = - 
 \mathrm{min}\left\{ \Big|\frac{m}{\sqrt{2 q}}-\mathrm{Im}\left(\epsilon\right)\Big| \  \Big\vert \ m\in\mathbb Z\setminus q\mathbb Z\ \right\}, 
\end{displaymath}
where $q=p$ or $q=p_-p_+$ for $\SingVOA{p}$ or $\SingVOA{p_+, p_-}$,
respectively (see Figure \ref{fig:wall} for a visualisation of the wall \(B_\epsilon^q\)).

Our findings can be summarised 
as follows.  For $\re(\epsilon)>B_\epsilon^q$, the discrete part in the
$S$-transformation formula vanishes and only the continuous part
contributes to the asymptotic properties of regularised characters. 
Regularised quantum dimensions of irreducibles in this regime are defined everywhere, and are
 given explicitly by (\ref{qdim-cont}). 
We also show that the regularised Verlinde-type formula for irreducible modules 
matches the main result of \cite{RW} (see Theorem \ref{fusion.ring}).  In particular,
$\ch{M^\epsilon}(\tau) \rightarrow \qdim{M^\epsilon}$ is injective (see
Theorem \ref{injective}).  This ring is conjecturally the Grothendieck (or
fusion) ring  of a suitable quotient of the module category of the singlet algebra.  

The region $\re(\epsilon)<B_\epsilon^q$ is more subtle. Here the quantum dimensions are 
strip-wise constant functions of \(\epsilon\).  This sharp difference comes from the fact that 
the correction part now not only contributes to the modular transformation
properties of regularised characters, but actually dominates the continuous
part in the asymptotics.

Regularised quantum dimensions of atypicals in this regime
are given in formula \eqref{atypicals-qdim}. 
Moreover, the fusion ring structure 
leaks through the wall at the drip points \(B_\epsilon^q\cap i\mathbb{R}\),
meaning that at the end of each open strip there is a point on the wall
 where the limit from the left and right agree.
The surjective image of the (conjectural) fusion ring is then isomorphic to the fusion ring of minimal models
(see Theorem \ref{injective} for the precise statement).

\subsection{Semi-simplification and quantum modular forms }
\label{sec:semisimplification}

The results obtained in Proposition \ref{fusion-1p} and Theorem
\ref{injective} are somewhat surprising because the correction  
term appears to capture modular data of a rational VOA, that is, of the
Virasoro minimal models and the $A^{(1)}_1$ WZW models, depending on
\(\epsilon\) for the $\SingVOA{p_+,p_-}$ algebra; 
and of $A^{(1)}_1$ WZW models for the $\SingVOA{p}$ algebra.

We provide two explanations of this phenomenon. One is purely categorical, but
requires that we postulate the existence of an appropriate categorical structure 
on the tensor category of modules.
While the other is a computational approach that reproduces the modular data of
the minimal models and that of the $A_1^{(1)}$ WZW models directly 
from irreducible atypical characters.

Let us first consider  the $\SingVOA{p}$ case in the categorical approach.
Here we expect the category of modules to be   rigid, which we assume for the moment.
The representation category of $\SingVOA{p}$ contains standard modules $\FF{\lambda}$.
As one can see in Section \ref{sec:1,p}, their quantum dimensions in the half-plane where the correction term dominates
is zero. We can regard the standard modules $\FF{\lambda}$ as negligible objects in the 
tensor category and they form an ideal. Thus, it is possible to restrict 
the category of all $\SingVOA{p}$-modules to just the category of atypical blocks, that is, 
to modules with finite composition series in terms of atypicals. We then form a new category 
where  the $\hom$ spaces are moded out by negligible morphisms.  This procedure of moding out negligible morphisms is well-known in the theory of quantum groups (see \cite{AP} for details). 
In particular, in the quotient category the standard modules $\mathcal{F}_{\lambda}$ are isomorphic
to the zero object. This quotient category (also known as  the ``semi-simplification'') is expected 
to be a modular tensor category.

For the $\SingVOA{p_+,p_-}$ vertex operator algebra, things are more subtle. We do not expect the category
of modules to be rigid and therefore there is no notion of a categorical trace. 
Instead, we first mod out the full category by the tensor ideal of minimal models (both \cite{RW} and our paper provide sufficient
evidence for the existence of such an ideal). Then the resulting quotient
category - after  restricted to a full subcategory - is conjecturally braided
and rigid. Assuming that, we proceed as above and after semi-simplification,
we expect to get the $(p_+,p_-)$-minimal models. 
In summary, we conjecture that:
\begin{itemize}
\item[(a)] The quotient category of $\SingVOA{p_+,p_-}$ singlet modules with respect to the tensor ideal of Virasoro minimal models is braided and rigid. Its ``semi-simplified''
category is equivalent to the modular tensor category of $(p_+,p_-)$-Virasoro minimal models. 
\item[(b)]  The category of $\SingVOA{p}$ singlet modules is braided and rigid.  Its ``semi-simplified'' category is equivalent to the modular tensor category of $A_1^{(1)}$ at level $p-2$. 
\end{itemize}

Although elegant,  this categorical approach is largely conjectural. So we
turn to the more concrete computational approach.
The main issue when dealing with singlet
characters is that they do not look anything like finite-dimensional vector
valued modular forms. However, the  problem of infinite dimensionality is 
easily cured by taking the quotient of the space of characters by the subspace
of characters of modules with vanishing quantum dimension.
This leaves us with a finite-dimensional space of distinguished atypical characters.  
In order to extract an $S$-matrix from this space we make use of quantum modular forms ({\em qmf}) introduced in \cite{Za}.
Quantum modular forms are of interest in connection to mock modular forms and invariants of
$3$-manifolds, so it is not a priori clear whether they play an important role
in the representation theory of irrational vertex operator algebras (see \cite{BCR,BM} for some indications that the two subjects are indeed related).
The main property of qmf, at least  for purposes of this paper, is that they
live { both} in the upper and the lower half-plane; and that radial limits from
both sides agree on a distinguished subset of the rationals - the {\em quantum
  set}. It is already known, that one can form a quantum modular form on the upper half-plane by choosing 
a false theta function, while on the lower half 
it is an Eichler integral. Then we prove that  
there is a vector valued quantum modular form of weight $\frac{1}{2}$ that
transforms in exactly the same way as  characters of the  $A_1^{(1)}$ WZW models
at level $p-2$ (see Theorem  \ref{qm-1p}).
Similar constructions for the $\SingVOA{p_+,p_-}$ singlets give rise to a vector 
valued quantum modular form of weight $\frac{3}{2}$ whose $S$-matrix is the one 
of the $(p_+,p_-)$-minimal models (see Theorem  \ref{32}).

\subsection{Fusion varieties}
The fusion rings of the singlet algebras in the ${\rm Re}(\epsilon)>B_\epsilon^q$ regime are infinite-dimensional commutative algebras with uncountably many generators
due to the continua of standard modules. If we restrict ourselves to the
subcategory discussed in Section \ref{sec:semisimplification}, we find that its fusion 
ring can be described as a quotient of a certain polynomial ring (see Proposition \ref{chebyshev}).
As such it can be viewed as the ring of functions on an algebraic variety - the {\em fusion variety}. 
For all singlet vertex operator algebras we explicitly describe this singular curve, which is always of genus zero (see Theorem \ref{fusion-variety}).
From the perspective of fusion varieties, the $\epsilon$-parameter is essentially the uniformisation parameter.
This is another reason why $\epsilon$-regularisation is an important ingredient in studying irrational vertex operator algebras.
 
\subsection{Summary of Results}

This work is structured as follows. In Section 2, the Heisenberg vertex algebra and singlet vertex algebras are introduced. 
In Section 3, characters of modules of these vertex algebras are stated.  
The original work starts with Section 4, where the characters are regularised following and generalising previous ideas \cite{CM}. Their modular properties are then computed in Section 5. These properties depend on the regularisation parameter and explicit formulae are stated in Theorem 8, the main result of this section.
In Section 6 the modular properties are used to determine the quantum dimensions of modules (Propositions 11, 12, 17 and 19) which are then compared to the Verlinde rings (Theorem 20) obtained by different methods in \cite{RW, CM}. The dependence on the regularisation parameter is illustrated in Figure 1.
In Section 7, varieties whose rings of functions are the Verlinde rings of the previous section are studied (Proposition 23 and Theorem 24). Theorem 26 then relates quantum dimensions of the singlet vertex algebras to those of well-known rational vertex algebras. 
 Finally, in Section 8, a vector-valued quantum modular form is associated to each singlet vertex algebra (Theorems 31 and 32).

\section{The Heisenberg and singlet vertex operator algebras}

In this section, we recall well known structures needed to describe the singlet vertex operator algebra.
We use the notation of \cite{FGST1,FGST2} (see also \cite{AdM2}, \cite{TW} for related results).
Let \(p_+,p_-\geq 1\), \(p_+\neq p_-\) and
\(\operatorname{gcd}(p_+,p_-)=1\). In later sections we will occasionally consider
the case when \(p_+,p_-\geq2\) and the case when either \(p_+=1\) or \(p_-=1\) separately. Additionally let
\begin{equation*}
  \alpha_+=\sqrt{\frac{2p_-}{p_+}}, \qquad\alpha_-=-\sqrt{\frac{2p_+}{p_-}},\qquad
  \alpha_0=\alpha_++\alpha_-,\qquad \alpha=p_+\alpha_+=-p_-\alpha_-=\sqrt{2p_+p_-},
\end{equation*}
and
\begin{align*}
  \alpha_{r,s}=\frac{1-r}{2}\alpha_++\frac{1-s}{2}\alpha_-=\alpha_{r+p_+,s+p_-},\qquad
  \alpha_{r,s;n}=\alpha_{r,s+np_-}=\alpha_{r-np_+,s}=\alpha_{r,s}+\frac{n}{2}\alpha\,,
\end{align*}
for \(r,s,n\in\mathbb{Z}\).

Let \(\hat{\mathfrak{h}}\) be the rank 1 extended Heisenberg Lie algebra with generators \(a_n,n\in\mathbb{Z}\), 
satisfying the commutation relations
\begin{align*}
  [a_m,a_n]=m\delta_{m,-n} C\,
\end{align*}
where $C$ is central.
We denote by \(\hat{\mathfrak{h}}_+\) and \(\hat{\mathfrak{h}}_-\) the
commutative Lie algebras generated by Heisenberg  generators labelled by
positive and negative integers respectively.
Let \(\FF{\lambda},\lambda\in\mathbb{C}\) be the standard Fock module over
\(U(\hat{\mathfrak{h}})\) generated by a highest weight vector $v_\lambda$ such that
\begin{align*}
  a_n v_{\lambda} &= \delta_{n,0}\lambda v_\lambda ,\ n\geq0\\
  U(\hat{\mathfrak{h}}_-) v_\lambda &=\FF{\lambda}\,
\end{align*}
where $C$ acts as $1$.

The Fock module \(\FF{0}\) carries the structure of a vertex algebra, which is
generated by the field
\begin{align*}
  Y(a_{-1}{\bf 1},z)=a(z)=\sum_{n \in \mathbb{Z}}a_n z^{-n-1},
\end{align*}
and where the vacuum vector is ${\bf 1}=v_0$. This vertex algebra becomes a
vertex operator algebra by choosing the conformal vector to be
\begin{align*}
  \omega= \frac{1}{2}(a_{-1}^2+\alpha_0a_{-2}){\bf 1}.
\end{align*}
The Fourier modes $L(n)$ of \(\omega\) then span the Virasoro algebra
with central charge
\begin{align*}
  c=1-3\alpha_0^2=1-6\frac{(p_+-p_-)^2}{p_+p_-}\,.
\end{align*}
This choice of conformal structure determines the conformal dimension of
the highest weight vectors \(v_\lambda\in\FF{\lambda}\) to be
\begin{displaymath}
\Delta_\lambda=\tfrac{1}{2}\lambda(\lambda-\alpha_0)=\tfrac12(\lambda-\tfrac{\alpha_0}{2})^2-\tfrac{\alpha_0}{8}.
\end{displaymath}

The minimal model vertex operator algebras at
\(c=1-6\frac{(p_+-p_-)^2}{p_+p_-}, p_+,p_-\geq2\) can be realised as subquotients of Fock
modules \cite{Fel,IK}. The representation theory of these vertex operator algebras
is completely reducible and the distinct isomorphism classes of irreducible
representations are labelled by the set 
\begin{displaymath}
  \KacT{p_+, p_-}=\left\{(r,s)\left|\ 1 \leq r < p_+ ,1 \leq s < p_- , s p_+ > r p_-\right.\right\}.
\end{displaymath}
We denote these irreducible representations by \(\VirIrr{r,s}\) and their
conformal dimension is $\Delta_{r,s}=\frac{(p_+ r-p_- s)^2-(p_+ - p_-)^2}{4 p_+  p_-}$. 
In this parametrisation the vacuum module is given by \(\VirIrr{1,1}\). Note
that \(\Delta_{r,s}=\Delta_{p_+-r,p_--s}\) which is why the irreducible minimal
model representations are labelled by \(\KacT{p_+,p_-}\) rather than the set \(\{(r,s)|1\leq
r<p_+, 1\leq s <p_-\}\).

\subsection{The singlet vertex operator algebra}

The Feigin-Fuchs
modules $\FF{r,s;n}:= \FF{\alpha_{r,s;n}}$ organise into so called Felder
complexes \cite{Fel} by
means of the two commuting screening operators \(\scr{+}\) and \(\scr{-}\).
For \(1\leq r <p_+\), \(1\leq s\leq p_-\), \(n\in\mathbb{Z}\)
\begin{equation}
  \begin{tikzpicture} 
    \path (0,0) node (1) {\(\cdots\)} 
    ++(3,0) node (2) {\(\FF{p_+-r,s;n-1}\)} edge [<-] node [above] {\(\scrs{+}{r}\)} (1)
    ++(3,0) node (3) {\(\FF{r,s;n}\)} edge [<-] node [above] {\(\scrs{+}{p_+-r}\)} (2) 
    ++(3,0) node (4) {\(\FF{p_+-r,s;n+1}\)} edge [<-] node [above] {\(\scrs{+}{r}\)} (3)
    ++(3,0) node (5) {\(\cdots\)} edge [<-] node [above] {\(\scrs{+}{p_+-r}\)} (4);
\end{tikzpicture}
\end{equation}
while for \(1\leq r \leq p_+\), \(1\leq s< p_-\), \(n\in\mathbb{Z}\)
\begin{equation}
 \begin{tikzpicture}
    \path (0,-2) node (6) {\(\cdots\)} 
    ++(3,0) node (7) {\(\FF{r,p_--s;n+1}\)} edge [<-] node  [above] {\(\scrs{-}{s}\)} (6) 
    ++(3,0) node (8) {\(\FF{r,s;n}\)} edge [<-] node  [above] {\(\scrs{-}{p_--s}\)} (7) 
    ++(3,0) node (9) {\(\FF{r,p_--s;n-1}\)} edge [<-] node [above] {\(\scrs{-}{s}\)} (8) 
    ++(3,0) node (10) {\(\cdots\,.\)} edge [<-] node  [above] {\(\scrs{-}{p_--s}\)} (9);
  \end{tikzpicture}
\end{equation}
We omit explicit formulae for the ``powers'' $\scrs{\pm}{i}$ as they involve contour integrals (see \cite{TW} and references therein for more details).
These complexes are exact everywhere but \(\FF{r,s;0}\) if \(r<p_+\) and \(s<p_-\). If
either \(r=p_+\) or \(s=p_-\), then they are exact everywhere. 

These two commuting screening operators conveniently allow one to define three
vertex operator subalgebras of \(\FF{0}=\FF{1,1;0}\).
\begin{definition} For \(p_+,p_-\geq2\), let
  \begin{displaymath}
    \begin{split}
      \SingVOA{p_+,p_-}^+&=\ker \scr{+}: \FF{1,1;0}\rightarrow \FF{p_+-1,1;1},\quad
      p_+\geq2,\\
      \SingVOA{p_+,p_-}^-&=\ker \scr{-}: \FF{1,1;0}\rightarrow
      \FF{1,p_--1;-1},\quad p_-\geq2,\\
      \SingVOA{p_+,p_-}&=\SingVOA{p_+,p_-}^+\cap \SingVOA{p_+,p_-}^-,\quad p_+,p_-\geq2,
    \end{split}
  \end{displaymath}
  while for \(p_-=1\) or \(p_+=1\), let
  \begin{displaymath}
    \begin{split}
      \SingVOA{p_+}&=\ker \scr{+}: \FF{1,1;0}\rightarrow \FF{p_+-1,1;1}, p_+\geq2,\\
      \SingVOA{p_-}&=\ker \scr{-}: \FF{1,1;0}\rightarrow \FF{1,p_--1;1}, p_-\geq2,
    \end{split}
  \end{displaymath}
  respectively.
\end{definition}
The vertex operator algebra $\SingVOA{p_+,p_+}$ is called the $(p_+,p_-)$-singlet
algebra while \(\SingVOA{p_+}\) and \(\SingVOA{p_-}\) are the \((p_+,1)\)-
and \((1,p_-)\)-singlet algebras, respectively. 
It was proven in \cite{AdM2} for $p_-=2$ and for general $p_-$ in \cite{TW} that $\SingVOA{p_+,p_-}$ is a $\W$-algebra of the singlet type, in the sense 
that is strongly generated by the Virasoro field and another primary field.
The vertex operator algebras $\SingVOA{p_+,p_-}^+$ and $ \SingVOA{p_+,p_-}^-$ have not been studied in 
the literature perhaps because they are not $\mathcal{W}$-algebras according
to standard definitions unless either \(p_+\) or \(p_-\) is 1. If \(p_+=1\)
then \(\SingVOA{p_-}=\SingVOA{1,p_-}^-\) and if \(p_-=1\) then
\(\SingVOA{p_+}=\SingVOA{p_+,1}^+\).

The minimal model vertex operator algebra is given by the (equivalent)
cohomologies of either
\(\scr{+}\) or \(\scr{-}\) at \(\FF{1,1;0}\), while the remaining minimal
model representations are given by the cohomologies at \(\FF{r,s:0}\).

\subsection{Representation theory}

Since \(\SingVOA{p_+,p_-}^\pm\) and \(\SingVOA{p_+,p_-}\) are vertex
operator subalgebras of \(\FF{0}\), the Fock modules \(\FF{\lambda}\) are also modules
over these subalgebras. When considered as modules over
\(\SingVOA{p_+,p_-}\) or \(\SingVOA{p_\pm}\),
we call the the Fock modules \emph{standard modules}. Additionally we define
the \emph{typical modules} to be the standard
modules that are simple over the vertex operator algebra being
considered. All indecomposable modules that are not typical are called \emph{atypical modules}.
\begin{definition}
  For \(n\in\mathbb{Z}\) let \(\SingKer{r,s;n}^\pm\), \(\SingIm{r,s;n}^\pm\),
  \(\SingKer{r,s;n}\) and \(\SingIm{r,s;n}\) be the following subspaces of the
  \(\FF{r,s;n}\):
  \begin{displaymath}
    \begin{split}
      \SingKer{r,s;n}^+&=\ker \scrs{+}{r}: \FF{r,s;n}\rightarrow
      \FF{p_+-r,s;n+1},\\
      \SingKer{r,s;n}^-&=\ker \scrs{-}{s}: \FF{r,s;n}\rightarrow
      \FF{r,p_--s;n-1},\\
      \SingKer{r,s;n}&=\SingKer{r,s;n}^+\cap \SingKer{r,s;n}^-,
    \end{split}\quad
    \begin{split}
      p_+\geq2, 1\leq r< p_+,1\leq s\leq p_-,\\
      p_-\geq2, 1\leq r\leq p_+,1\leq s< p_-,\\
      p_+,p_-\geq2, 1\leq r< p_+,1\leq s< p_-.
    \end{split}
  \end{displaymath}
  \begin{displaymath}
    \begin{split}
      \SingIm{r,s;n}^+&=\img \scrs{+}{p_+-r}: \FF{p_+-r,s;n-1}\rightarrow \FF{r,s;n},\\
      \SingIm{r,s;n}^-&=\img \scrs{-}{p_--s}: \FF{r,p_--s;n+1}\rightarrow \FF{r,s;n},\\
      \SingIm{r,s;n}&=\SingIm{r,s;n}^+\cap \SingIm{r,s;n}^-,
    \end{split}\quad
    \begin{split}
      p_+\geq2, 1\leq r<p_+,1\leq s\leq p_-,\\
      p_-\geq2, 1\leq r\leq p_+,1\leq s< p_-,\\
      p_+,p_-\geq2, 1\leq r<p_+,1\leq s< p_-.
    \end{split}
  \end{displaymath}
\end{definition}
Note that the singlet vacuum is \(\SingKer{1,1;0}=\SingVOA{p_+,p_-}\) and that \(\SingKer{1,1;0}^\pm=\SingVOA{p_+,p_-}^\pm\).
The $\SingKer{r,s;n}$ and the $\SingIm{r,s;n}^{\pm}$ are modules over
$\SingVOA{p_+,p_-}^\pm$ and since $\SingVOA{p_+,p_-}=\SingVOA{p_+,p_-}^+ \cap \SingVOA{p_+,p_-}^-$, both $\SingKer{r,s;n}$ and
$\SingIm{r,s;n}$ are $\SingVOA{p_+,p_-}$-modules. 
For notational simplicity it turns out to be convenient to extend the range of
the \(r,s\) label in \(\SingIm{r,s;n}\) to include the border cases when
\(r=p_+\) or \(s=p_-\), so we define
\begin{displaymath}
  \begin{split}
    \SingIm{p_+,s;n}=\SingIm{p_+,s;n}^-,\qquad
    \SingIm{r,p_-;n}=\SingIm{r,p_-;n}^+,\qquad
    \SingIm{p_+,p_-;n}=\FF{p_+,p_-;n},
  \end{split}
\end{displaymath}
where \(1\leq r<p_+\), \(1\leq s<p_-\) and \(n\in\mathbb{Z}\).

In \cite{TW} (cf. also \cite{AdM2}), Zhu's associative algebra of $\SingVOA{p_+,p_-}$ was computed. 
By using its structure one can prove that every irreducible module appears as subquotient 
of a Fock representation. This implies the following theorem.
\begin{thm} \label{class-irrep} Let \(p_+,p_-\geq2\). A simple ($\mathbb{Z}_{\geq 0}$-gradable) $\SingVOA{p_+,p_-}$-module 
is isomorphic to one of the following:
\begin{itemize}
\item[(a)] \emph{Typical}\footnote{The nomenclature ``standard'' and ``typical module'' are both commonly used in the representation theory of logarithmic vertex algebras and have their origin in associative algebras and Lie super algebras, respectively. 
}: Standard modules $\FF{\lambda}$, where $\lambda
  \notin \frac{1}{\sqrt{2p_+ p_-}} \mathbb{Z}$ or $\FF{p_+,p_-;n}, n\in\mathbb{Z}$.
\item[(b)] \emph{Atypical submodules of standard modules}:
    $\mathcal{I}_{r,s;n}$,\\ where $1 \leq r \leq p_+$ and $1 \leq s \leq p_-$ and $n \in \mathbb{Z}$.
\item[(c)] \emph{Atypical subquotients of standard modules}: irreducible Virasoro modules $\mathcal{L}_{r,s}$,\\ where \((r,s)\in\KacT{p_+,p_-}\).
\end{itemize}
\end{thm}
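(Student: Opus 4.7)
The plan is to deduce the classification from the structure of Zhu's associative algebra $A(\SingVOA{p_+,p_-})$, as computed in \cite{TW}, combined with the Felder complex structure displayed above the statement. Zhu's theorem places simple $\mathbb{Z}_{\geq0}$-gradable $\SingVOA{p_+,p_-}$-modules $M$ in bijection with simple $A(\SingVOA{p_+,p_-})$-modules via $M\mapsto M(0)$, the lowest conformal weight space. Since $\SingVOA{p_+,p_-}\subset\FF{0}$ acts on every Fock module $\FF{\lambda}$, and since the highest weight vector $v_\lambda$ has conformal weight $\Delta_\lambda$, each simple $A(\SingVOA{p_+,p_-})$-module can be realised as the top of the irreducible quotient of the cyclic $\SingVOA{p_+,p_-}$-submodule generated by some $v_\lambda$, thereby reducing the task to identifying these cyclic subquotients inside Fock modules.

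With this setup I would argue as follows. For $\lambda$ generic, i.e.\ $\lambda\notin\frac{1}{\sqrt{2p_+p_-}}\mathbb{Z}$, no non-trivial screening operator acts between $\FF{\lambda}$ and its potential neighbours in a Felder-type complex, so $\FF{\lambda}$ is already simple over $\SingVOA{p_+,p_-}$; the same conclusion holds in the boundary case $\lambda=\alpha_{p_+,p_-;n}$ because the corresponding Felder complexes are exact everywhere, as noted in the excerpt. This yields the list (a) of typical modules. For the remaining lattice values $\lambda=\alpha_{r,s;n}$ with $(r,s)$ inside the Kac range, the two Felder complexes together identify $\SingIm{r,s;n}\subset\FF{r,s;n}$ as a proper submodule, and the failure of exactness at $\FF{r,s;n}$ (combined with exactness elsewhere) forces $\SingIm{r,s;n}$ to be the unique maximal proper submodule. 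Hence $\SingIm{r,s;n}$ is itself simple, providing the atypical list (b), while the quotient $\FF{r,s;n}/\SingIm{r,s;n}\cong\VirIrr{r,s}$ furnishes the atypical subquotient in (c) whenever $(r,s)\in\KacT{p_+,p_-}$.

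The main obstacle is showing that $\VirIrr{r,s}$ is simple as a $\SingVOA{p_+,p_-}$-module, and not merely as a Virasoro module, i.e.\ that the additional strong generator $W$ of the singlet cannot produce a new proper submodule. This is precisely where the explicit presentation of $A(\SingVOA{p_+,p_-})$ from \cite{TW} is used: one verifies that on the top of $\VirIrr{r,s}$ the image of $W$ in $A(\SingVOA{p_+,p_-})$ acts by the scalar dictated by the Kac label $(r,s)$, so the Virasoro-level simplicity is preserved under the full singlet action. Finally, comparing the conformal weights $\Delta_{\alpha_{r,s;n}}$ across the three families (a), (b), (c) and matching them against the classification of simple $A(\SingVOA{p_+,p_-})$-modules provided by \cite{TW} establishes both exhaustion and pairwise non-isomorphism, completing the classification.
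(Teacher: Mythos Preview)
Your overall strategy---invoke the computation of Zhu's algebra $A(\SingVOA{p_+,p_-})$ from \cite{TW} and deduce that every simple $\mathbb{Z}_{\geq0}$-gradable module is a subquotient of some Fock module---is exactly what the paper does (indeed, the paper gives no proof beyond the sentence ``By using its structure one can prove that every irreducible module appears as subquotient of a Fock representation'').

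However, your description of the submodule structure of $\FF{r,s;n}$ is incorrect in several places, and these are not cosmetic slips.  First, $\SingIm{r,s;n}$ is \emph{not} the unique maximal proper submodule of $\FF{r,s;n}$; it is the socle.  The Fock modules $\FF{r,s;n}$ at lattice points have length greater than two over $\SingVOA{p_+,p_-}$, so the argument ``maximal proper submodule, hence simple'' fails on both counts: being a maximal proper submodule would make the \emph{quotient} simple, not the submodule, and in any case there are intermediate composition factors (namely various $\SingIm{r',s';n'}$) between the socle and the top.  Second, the quotient $\FF{r,s;n}/\SingIm{r,s;n}$ is not $\VirIrr{r,s}$.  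The Virasoro module appears only at $n=0$, and only as the quotient $\SingKer{r,s;0}/\SingIm{r,s;0}$ of the double kernel by the double image, as in the exact sequence \eqref{eq:scrkerseq}; this is also where you should locate the scalar action of the extra generator $W$.  Third, the Felder complexes fail to be exact only at $\FF{r,s;0}$ with $1\leq r<p_+$, $1\leq s<p_-$; for $n\neq0$ they are exact, so your phrase ``failure of exactness at $\FF{r,s;n}$'' conflates the two regimes.

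To repair the argument you should either (i) quote the socle filtration of $\FF{r,s;n}$ over $\SingVOA{p_+,p_-}$ (this is available in \cite{TW} or can be read off from the two commuting Felder complexes) and read off the composition factors directly, or (ii) bypass the Fock structure entirely and argue purely from the list of simple $A(\SingVOA{p_+,p_-})$-modules given in \cite{TW}, matching top components.  Either route is short once the module structure is stated correctly.
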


Recall that the Felder complexes are exact away from \(n=0\), thus
\begin{align*}
  \SingKer{r,s;n}^\mu\cong \SingIm{r,s;n}^\mu,\ \mu=\pm,\emptyset,\ n\neq0.
\end{align*}
However, at \(n=0\) when \(1\leq r<p_+,1\leq s<p_-\), the modules
\(\SingKer{r,s;0}, \SingIm{r,s;0}\) satisfy the following exact sequences
\begin{equation}\label{eq:scrkerseq}
  \begin{tikzpicture}
    \path (0,0) node (1) {\(0\)} 
    ++(2,0) node (2) {\(\SingIm{r,s;0}\)} edge [<-]  (1)
    ++(2,0) node (3) {\(\SingKer{r,s;0}\)} edge [<-]  (2)
    ++(2,0) node (4) {\(\mathcal{L}_{r,s}\)} edge [<-]  (3)
    ++(2,0) node (5) {\(0\)\,.} edge [<-]  (4);
  \end{tikzpicture}
\end{equation}

If we consider the \(\scr{+}\) Felder complex, but restrict to
\(\SingIm{r,s;n}^-\subset \FF{r,s;n}\), then because the screening operators \(\scr{+}\)
and \(\scr{-}\) commute, we obtain the complex
\begin{align}\nonumber
  \begin{tikzpicture}
    \path (0,0) node (1) {\(\cdots\)} 
    ++(3,0) node (2) {\(\SingIm{p_+-r,s;n-1}^-\)} edge [<-] node [above] {\(\scrs{+}{r}\)} (1) 
    ++(3,0) node (3) {\(\SingIm{r,s;n}^-\)} edge [<-] node [above] {\(\scrs{+}{p_+-r}\)} (2) 
    ++(3,0) node (4) {\(\SingIm{p_+-r,s;n+1}^-\)} edge [<-] node [above] {\(\scrs{+}{r}\)} (3)
    ++(3,0) node (5) {\(\cdots\)} edge [<-] node [above] {\(\scrs{+}{p_+-r}\)} (4);
  \end{tikzpicture}
\end{align}
which happens to be exact for all \(n\).\footnote{The exactness can be seen be
  comparing socle sequence decompositions.}
Similarly for the \(\scr{-}\) Felder complex we
have the exact sequence
\begin{align*}
  \begin{tikzpicture}
    \path (0,0) node (6) {\(\cdots\)} 
    ++(3,0) node (7) {\(\SingIm{r,p_--s;n+1}^+\)} edge [<-] node  [above] {\(\scrs{-}{s}\)} (6) 
    ++(3,0) node (8) {\(\SingIm{r,s;n}^+\)} edge [<-] node  [above] {\(\scrs{-}{p_--s}\)} (7) 
    ++(3,0) node (9) {\(\SingIm{r,p_--s;n-1}^+\)} edge [<-] node [above] {\(\scrs{-}{s}\)} (8) 
    ++(3,0) node (10) {\(\cdots\,,\)} edge [<-] node  [above] {\(\scrs{-}{p_--s}\)} (9);
  \end{tikzpicture}
\end{align*}

\begin{remark}  The structure of Felder's complex is somewhat simpler if either
   \(p_+\) or \(p_-\) is 1. If \(p_+=1\) then it is given by 
  \begin{displaymath}
    \begin{tikzpicture}
      \path (0,-2) node (6) {\(\cdots\)} 
      ++(3,0) node (7) {\(\FF{r,s}\)} edge [<-] node  [above] {\(\scrs{-}{s}\)} (6) 
      ++(3,0) node (8) {\(\FF{r+1,p_--s}\)} edge [<-] node  [above] {\(\scrs{-}{p_--s}\)} (7) 
      ++(3,0) node (9) {\(\FF{r+2,s}\)} edge [<-] node [above] {\(\scrs{-}{s}\)} (8) 
      ++(3,0) node (10) {\(\cdots\,,\)} edge [<-] node  [above] {\(\scrs{-}{p_--s}\)} (9);
    \end{tikzpicture}
  \end{displaymath}
  and is exact everywhere. Irreducible atypical $\SingVOA{p_-}$ modules are obtained in the range $1 \leq s \leq p_--1$:
  \begin{displaymath}
    \SingImP{r,s}=\ker(\scrs{-}{s}: \FF{r,s} \rightarrow \FF{r+1,p-s})=\img(\scrs{-}{p-s}: \FF{r-1,s} \rightarrow \FF{r,s}).
\end{displaymath}
A classification theorem for $\SingVOA{p_-}$-modules analogous to Theorem \ref{class-irrep} can be found in \cite{CM}.
\end{remark}

\section{Characters of singlet algebra modules}

In this section we will use the Felder complexes to derive character formulae
for atypical modules in terms of the characters of standard modules. These
character formulae will then offer a simple means of regularising the
characters of atypical modules in later sections.

The partial theta function \(\ptheta{a,b}{u,\tau}\) is the power series
\begin{align*}
  \ptheta{a,b}{u,\tau}=\sum_{k\geq0}z^{k+\tfrac{b}{2a}}q^{a(k+\tfrac{b}{2a})^2},\ q=e^{2\pi i\tau},z=e^{2\pi i u}\,,
\end{align*}
where \(a,b\in\mathbb{N},\tau\in\mathbb{H},u\in\mathbb{C}\), while the theta function is given by
\begin{align*}
  \thetaf{a,b}{u,\tau}=\sum_{k\in\mathbb{Z}}z^{k+\tfrac{b}{2a}}q^{a(k+\tfrac{b}{2a})^2},\ q=e^{2\pi i\tau},z=e^{2\pi i u}\,.
\end{align*}
Furthermore, we define
\begin{displaymath}
  \thetaf{a,b}{\tau}=\thetaf{a,b}{0,\tau}, \qquad \ptheta{a,b}{\tau}=\ptheta{a,b}{0,\tau},
\end{displaymath}
and
\begin{displaymath}
  \thetaf{a,b}{\tau}^\prime=z\partial_z \thetaf{a,b}{u,\tau}|_{u=0}, \qquad
  \ptheta{a,b}{\tau}^\prime=z\partial_z \ptheta{a,b}{u,\tau}|_{u=0}.
\end{displaymath}
The theta functions satisfy the well-known modular transformation formula
\begin{equation} \label{mod-theta}
  \thetaf{a,b}{\frac{u}{\tau}, -\frac{1}{\tau}} =  
  \sqrt{\frac{-i\tau}{2a}} {e^{\pi i \frac{u^2}{2a\tau}}} 
  \sum_{c=0}^{2a-1} e^{\frac{-2\pi i b c}{2a}}\thetaf{a,c}{u, \tau},
\end{equation}
while the modular transformation properties of the partial theta functions $\ptheta{a,b}{u,\tau}$ 
are much more delicate and involve the $\epsilon$-regularisation
procedure introduced in \cite{CM}. 

The character of the standard module \(\FF\lambda\) is given by
\begin{displaymath}
  \ch{\FF\lambda}(u,\tau)=\tr_{\mathcal{F}_\lambda} q^{L(0)-c/24}=
  \frac{q^{\Delta_\lambda-\tfrac{c}{24}}}{(q)_\infty}
  =\frac{q^{(\lambda-\frac{\alpha_0}{2})^2/2}}{\eta(q)}\,.
\end{displaymath}
Note that both \(\FF\lambda\) and \(\FF{\alpha_0-\lambda}\) have identical
characters. We will distinguish these two standard modules by introducing a deformation
parameter \(\epsilon\) in Section \ref{sec:regchars}.

The Virasoro minimal model representations are given by the cohomologies of the Felder complexes
and therefore, by the Euler-Poincar\'e principle, their characters are the alternating sums of the characters of all the entries of the complexes
\begin{align*}
  \ch{\VirIrr{r,s}}&=\sum_{k\in\mathbb{Z}}\ch{\FF{r,s;2k}}-\ch{\FF{p_+-r,s;2k+1}}
  =\sum_{k\in\mathbb{Z}}\ch{\FF{r,s;2k}}-\ch{\FF{r,p_-s;2k+1}}\\
  &=\frac{\theta_{p_+p_-,-rp_-+sp_+}(\tau)-\theta_{p_+p_-,rp_-+sp_+}(\tau)}{\eta(\tau)}
  =\frac{\theta_{p_+p_-,rp_--sp_+}(\tau)-\theta_{p_+p_-,rp_-+sp_+}(\tau)}{\eta(\tau)},
\end{align*}
where \((r,s)\in\KacT{p_+,p_-}\).

By choosing resolutions for the \(\SingIm{r,s;n}^+\) and co-resolutions for the
\(\SingIm{r,s;n}^-\) in the 
Felder complexes above, the Euler-Poincar\'e principle yields the following character formulae
\begin{align*}
  \ch{\SingIm{r,s:n}^+}&=(1-\delta_{s,p_-})\delta_{n\geq1}\left(\delta_{n,\text{even}}\ch{\VirIrr{r,s}}
    -\delta_{n,\text{odd}}\ch{\VirIrr{r,p_--s}}\right)\\
  &\quad +\sum_{k\geq0} \ch{\FF{p_+-r,s:n-2k-1}}-\ch{\FF{r,s:n-2k-2}}\\
  &=(1-\delta_{s,p_-})\delta_{n\geq1}\frac{1}{\eta(q)}
  \left(\delta_{n,\text{even}}(\theta_{p_+p_-,-rp_-+sp_+}(\tau)
    -\theta_{p_+p_-,rp_-+sp_+}(\tau))\right.\\
  &\quad \left.-\delta_{n,\text{odd}}(\theta_{p_+p_-,rp_-+sp_+-p_+p_-}(\tau)
    -\theta_{p_+p_-,-rp_-+sp_++p_+p_-}(\tau))\right)\\
  &\quad+
  \frac{\ptheta{p_+p_-,(2-n)p_+p_--rp_--sp_+}{\tau}-\ptheta{p_+p_-,(2-n)p_+p_-+rp_--sp_+}{\tau}}{\eta(q)},
\end{align*}
where \(1\leq r<p_+,1\leq s\leq p_-\),
\begin{align*}
  \ch{\SingIm{r,s:n}^-}&=(1-\delta_{r,p_+})\delta_{n\geq0}\left(\delta_{n,\text{odd}}\ch{\VirIrr{r,p_--s}}
    -\delta_{n,\text{even}}\ch{\VirIrr{r,s}}\right)\\
  &\quad +\sum_{k\geq0} \ch{\FF{r,s:n-2k}}-\ch{\FF{r,p_--s:n-2k-1}}\\
  &=(1-\delta_{r,p_+})\delta_{n\geq0}\frac{1}{\eta(q)}
  \left(\delta_{n,\text{odd}}(\theta_{p_+p_-,rp_-+sp_+-p_+p_-}(0,\tau)
    -\theta_{p_+p_-,-rp_-+sp_++p_+p_-}(0,\tau)\right.\\
    &\quad \left.-\delta_{n,\text{even}}(\theta_{p_+p_-,-rp_-+sp_+}(0,\tau)
    -\theta_{p_+p_-,rp_-+sp_+}(0,\tau)))\right)\\
  &\quad+
  \frac{\ptheta{p_+p_-,-np_+p_-+rp_--sp_+}{0,\tau}-\ptheta{p_+p_-,-np_+p_-+rp_-+sp_+}{0,\tau}}{\eta(q)},\\
\end{align*}
where \(1\leq r\leq p_+,1\leq s< p_-\) and
\begin{displaymath}
  \delta_{n\geq k}=
  \begin{cases}
    1&n\geq k\\
    0& n<k
  \end{cases},\quad k=0,1.
\end{displaymath}
These formulae are structurally similar to the characters of modules over the
$\SingVOA{p}$ singlet algebra studied in \cite{CM}, that is, they are expressible 
as differences of partial thetas divided by the Dedekind \(\eta\)-function. 

For the \(\SingIm{r,s;n}\) we again apply the Euler-Poincar\'e principle to
obtain the characters in terms of \(\SingIm{r,s;n}^\pm\) characters.
\begin{align*}
  \ch{\SingIm{r,s;n}}&=\sum_{k\geq0}\ch{\SingIm{r,s;n-2k}^+}-\ch{\SingIm{r,p_--s;n-2k-1}^+}\\
  &=\sum_{k\geq0}\ch{\SingIm{p_+-r,s;n-2k-1}^-}-\ch{\SingIm{r,s;n-2k-2}^-}\\
&=n\cdot\delta_{n\geq0}\left(\delta_{n,\text{even}}\ch{\VirIrr{r,s}}
  -\delta_{n,\text{odd}}\ch{\VirIrr{r,p_--s}}\right)\\
 &\quad +\sum_{k\geq0}(k+1)\left(
   \ch{\FF{p_+-r,s:n-2k-1}}+\ch{\FF{r,p_--s:n-2k-3}}\right.\\
 &\quad\phantom{+\sum_{k\geq0}(k+1)}\left.
   -\ch{\FF{r,s:n-2k-2}}-\ch{\FF{p_+-r,p_--s:n-2k-2}}\right)\\
 &=\delta_{n\geq1}\frac{n}{\eta(q)}
  \left(\delta_{n,\text{even}}(\theta_{p_+p_-,-rp_-+sp_+}(0,\tau)
    -\theta_{p_+p_-,rp_-+sp_+}(0,\tau))\right.\\
    &\quad \left.-\delta_{n,\text{odd}}(\theta_{p_+p_-,rp_-+sp_+-p_+p_-}(0,\tau)
    -\theta_{p_+p_-,-rp_-+sp_++p_+p_-}(0,\tau))\right)\\
  &\quad\frac{1}{\eta(\tau)} \sum_{k\geq0}(k+1)\left( q^{p_-p_+\left(k+\frac{(2-n)p_+p_- -p_+ s-p_- r}{2 p_+ p_-}\right)^2}+ 
     q^{p_- p_+\left(k+\frac{(2-n)p_+p_- + p_+s+p_- r}{2 p_+ p_-}\right)^2} \right.\\
 &\quad \phantom{\frac{1}{\eta(\tau)} \sum_{k\geq0}(k+1)}\quad 
 \left. - q^{p_- p_+\left(k+\frac{(2-n)p_+p_- -p_+ s+p_- r}{2 p_+ p_-}\right)^2}
     - q^{p_- p_+\left(k+\frac{(2-n)p_+p_- +p_+ s-p_- r}{2 p_+ p_-}\right)^2}
   \right),
\end{align*}
where \(1\leq r< p_+,1\leq s< p_-\).
By the identity
\begin{align*}
  \sum_{k\geq 0}(k+1)q^{a(k+\tfrac{b}{2a})^2}=(1-\tfrac{b}{2a})\ptheta{a,b}{0,\tau}+\ptheta{a,b}{0,\tau}^\prime\,,
\end{align*}
we obtain the following character formulae in terms
of partial theta functions:
\begin{align*}
  &\ch{\SingIm{r,s;n}}=\delta_{n\geq1}\frac{n}{\eta(q)}
  \left(\delta_{n,\text{even}}(\theta_{p_+p_-,-rp_-+sp_+}(0,\tau)
    -\theta_{p_+p_-,rp_-+sp_+}(0,\tau))\right.\\
    &\quad \left.-\delta_{n,\text{odd}}(\theta_{p_+p_-,rp_-+sp_+-p_+p_-}(0,\tau)
    -\theta_{p_+p_-,-rp_-+sp_++p_+p_-}(0,\tau))\right)\\
 &+\frac{1}{\eta(q)}
  \left[\frac{1}{2p_+p_-}\left(
      (np_+p_-+rp_-+sp_+)\ptheta{p_+p_-,(2-n)p_+p_--rp_--sp_+}{0,\tau}\right.\right.\\ &\phantom{\frac{1}{\eta{q}}\frac{1}{2p_+p_-}\ }
  +(np_+p_--rp_--sp_+)\ptheta{p_+p_-,(2-n)p_+p_-+rp_-+sp_+}{0,\tau}\\ &\phantom{\frac{1}{\eta{q}}\frac{1}{2p_+p_-}\ }
  -(np_+p_--rp_-+sp_+)\ptheta{p_+p_-,(2-n)p_+p_-+rp_--sp_+}{0,\tau}\\ &\phantom{\frac{1}{\eta{q}}\frac{1}{2p_+p_-}\ }
  \left.-(np_+p_-+rp_--sp_+)\ptheta{p_+p_-,(2-n)p_+p_--rp_-+sp_+}{0,\tau}\right)\\ &\phantom{\frac{1}{\eta{q}}\ }
 + \ptheta{p_+p_-,(2-n)p_+p_--rp_--sp_+}{0,\tau}^\prime+\ptheta{p_+p_-,(2-n)p_+p_-+rp_-+sp_+}{0,\tau}^\prime\\ &\phantom{\frac{1}{\eta{q}}\ }
  -\ptheta{p_+p_-,(2-n)p_+p_-+rp_--sp_+}{0,\tau}^\prime-\ptheta{p_+p_-,(2-n)p_+p_--rp_-+sp_+}{0,\tau}^\prime\bigg]
\end{align*}

Finally, the character of the \(\SingKer{r,s;0}\) follows from the exact sequence \eqref{eq:scrkerseq}
\begin{align*}
  \ch{\SingKer{r,s;0}}=&\ch{\VirIrr{r,s}}+\ch{\SingIm{r,s;0}}\,.
\end{align*}
In \cite{RW}, character formulae were also derived using Felder complexes, however, with different choices on when to use resolutions or co-resolutions.

\section{Regularised characters of singlet algebra modules}
\label{sec:regchars}

In this section we regularise the characters of irreducible
$\SingVOA{p_+,p_-}$ modules following the methods in \cite{CM}. 
Let $ \epsilon \in \mathbb{C}$.
We define the regularised typical characters to be
\begin{displaymath}
  \ch{\FF{\lambda}^\epsilon} := e^{2\pi \epsilon\left(\lambda-\alpha_0/2\right)}
  \ch{\FF{\lambda}}=
  e^{2\pi \epsilon\left(\lambda-\alpha_0/2\right)} \frac{q^{(\lambda-\alpha_0/2)^2/2}}{\eta(\tau)}.
\end{displaymath}
The regularised characters of atypical modules will then be defined using the
resolutions of previous sections where
unregularised characters of standard modules are replaced by regularised
characters of standard modules.
To more easily give the regularised characters of atypical modules,
we introduce $\epsilon$-regularised partial theta functions 
\begin{equation}
  \regptheta{\epsilon}{a, b}{u , \tau} =\sum_{k\geq0}e^{2\pi\epsilon\left(\tfrac{b}{2a}+k\right)}z^{\tfrac{b}{2a}+k}q^{a\left(\tfrac{b}{2a}+k\right)^2}
\end{equation}
as well as the mixed false theta functions
\begin{displaymath}
F_{b, c}^\epsilon(\tau) := \frac{1}{\eta(\tau)}
\left(P^{-\sqrt{2p_+p_-}\epsilon}_{p_+p_-, b-c}(0, \tau) -
  P^{-\sqrt{2p_+p_-}\epsilon}_{p_+p_-, b+c}(0, \tau)\right),\quad
b,c\in\mathbb{Z}, c\neq0. 
\end{displaymath}
By replacing the characters of typical modules by regularised characters in the
character formulae of the previous section,
we get the following formulae for regularised atypical characters:
\begin{equation}\label{eq:charI+}
\begin{split}
\ch{\SingIm{r,s;n}^{+, \epsilon}}&= (1-\delta_{s,p_-})\delta_{n\geq1}\left(\delta_{n,\text{even}}\ch{\VirIrr{r,s}}
  -\delta_{n,\text{odd}}\ch{\VirIrr{r,p_--s}}\right)\\
&\quad+\sum_{k\geq0}\ch{\FF{p_+-r,s;n-2k-1}^\epsilon}-\ch{\FF{r,s;n-2k-2}^\epsilon}\\
&= (1-\delta_{s,p_-})\delta_{n\geq1}\left(\delta_{n,\text{even}}\ch{\VirIrr{r,s}}
  -\delta_{n,\text{odd}}\ch{\VirIrr{r,p_--s}}\right)\\
  &\quad+\frac{\regptheta{-\sqrt{2p_-p_+}\epsilon}{p_+p_-,(2-n)p_+p_--rp_--sp_+}{\tau}-\regptheta{-\sqrt{2p_-p_+}\epsilon}{p_+p_-,(2-n)p_+p_-+rp_--sp_+}{\tau}}{\eta(q)}\\
&= (1-\delta_{s,p_-})\delta_{n\geq1}\left(\delta_{n,\text{even}}\ch{\VirIrr{r,s}}
  -\delta_{n,\text{odd}}\ch{\VirIrr{r,p_--s}}\right)
+F_{(2-n)p_+p_--sp_+, rp_-}^{\epsilon}(\tau),     
\end{split}
\end{equation}
where \(1\leq r<p_+,1\leq s\leq p_-\).
\begin{equation}
\begin{split}
\ch{\SingIm{r,s;n}^{-, \epsilon}}&= (1-\delta_{r,p_+})\delta_{n\geq0}\left(\delta_{n,\text{odd}}\ch{\VirIrr{r,p_--s}}
   -\delta_{n,\text{even}}\ch{\VirIrr{r,s}}\right)\\
&\quad+\sum_{k\geq0}\ch{\FF{r,s;n-2k}^\epsilon}-\ch{\FF{r,p_--s;n-2k-1}^\epsilon}\\
&=(1-\delta_{r,p_+})\delta_{n\geq0}\left(\delta_{n,\text{odd}}\ch{\VirIrr{r,p_--s}}
   -\delta_{n,\text{even}}\ch{\VirIrr{r,s}}\right)\\
&\quad+ \frac{\regptheta{-\sqrt{2p_-p_+}\epsilon}{p_+p_-,-np_+p_-+rp_--sp_+}{\tau}-\regptheta{-\sqrt{2p_-p_+}\epsilon}{p_+p_-,-np_+p_-+rp_-+sp_+}{\tau}}{\eta(q)}\\
&= (1-\delta_{r,p_+})\delta_{n\geq0}\left(\delta_{n,\text{odd}}\ch{\VirIrr{r,p_--s}}
   -\delta_{n,\text{even}}\ch{\VirIrr{r,s}}\right)
+F_{ -np_+p_-+rp_-, sp_+}^{\epsilon}(\tau),         
\end{split}
\end{equation}
where \(1\leq r\leq p_+,1\leq s<p_-\).
\begin{equation}
\begin{split}
  &\ch{\SingIm{r,s;n}^{\epsilon}}=n\cdot\delta_{n\geq0}\left(\delta_{n,\text{even}}\ch{\VirIrr{r,s}}
    -\delta_{n,\text{odd}}\ch{\VirIrr{r,p_--s}}\right)\\
  &\qquad+\frac{1}{4}\sum_{\nu\in\{\pm 1\}}    \bigg[ {F^{\epsilon}_{ (2-n)p_+p_--\nu sp_+,\nu rp_-}(\tau)}^\prime
    +{F^{\epsilon}_{(2-n)p_+p_--\nu rp_-,\nu sp_+}(\tau)}^\prime \\
  &\qquad+ \left.\left(n+2\frac{\nu s}{p_-}\right)F^{\epsilon}_{(2-n)p_+p_--\nu
      sp_+,\nu rp_-}(\tau)+
    \left(n+2\frac{\nu r}{p_+}\right)F^{\epsilon}_{ (2-n)p_+p_--\nu rp_-,\nu sp_+}(\tau)
\right].
\end{split}
\end{equation}
where \(1\leq r<p_+,1\leq s<p_-\).
Note that we do not regularise the characters of the irreducible Virasoro modules \(\VirIrr{r,s}\).

\section{Modular-type properties of regularised characters} 

In this section we will develop the modular transformation properties of the
characters of both standard and atypical modules.

The transformation properties of standard modules can be found in
\cite[Proposition 22]{CM} and other papers. 
\begin{prop}\label{sec:FockStransf}
The modular $S$-transformation of typical characters is
\begin{equation*}
  \begin{split}
    \mathrm{ch}[ \mathcal{F}^\epsilon_{\lambda+\alpha_0/2}]\left(-\frac{1}{\tau}\right)&=
    \int_{\mathbb R} S_{\lambda+\alpha_0/2}^\epsilon(x) \mathrm{ch}[\mathcal{F}^\epsilon_{x+\alpha_0/2}](\tau)dx,
  \end{split}
\end{equation*}
with $S_{\lambda+\alpha_0/2}^\epsilon(x)= e^{2\pi\epsilon (\lambda-x)}e^{-2\pi\lambda x} $.
\end{prop}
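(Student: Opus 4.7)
The plan is to reduce the proposition to the standard Gaussian integral identity and then reinsert the regularisation factor. Since the regularised character factorises as
\begin{equation*}
  \mathrm{ch}[\mathcal{F}^\epsilon_{\lambda+\alpha_0/2}](\tau) = e^{2\pi\epsilon\lambda}\,\frac{q^{\lambda^2/2}}{\eta(\tau)},
\end{equation*}
the exponential prefactor is unaffected by $\tau\mapsto-1/\tau$, so the whole computation reduces to controlling the Gaussian $q^{\lambda^2/2}/\eta(\tau)$ under the $S$-transformation.

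First I would apply the standard modular transformation $\eta(-1/\tau)=\sqrt{-i\tau}\,\eta(\tau)$ together with $q^{\lambda^2/2}\big|_{\tau\mapsto -1/\tau}=e^{-\pi i\lambda^2/\tau}$. This gives
\begin{equation*}
  \mathrm{ch}[\mathcal{F}^\epsilon_{\lambda+\alpha_0/2}](-1/\tau) = \frac{e^{2\pi\epsilon\lambda}\,e^{-\pi i\lambda^2/\tau}}{\sqrt{-i\tau}\,\eta(\tau)}.
\end{equation*}
The next step is to rewrite the factor $e^{-\pi i\lambda^2/\tau}/\sqrt{-i\tau}$ as a Gaussian integral via the classical identity (valid for $\tau\in\mathbb{H}$, obtained by completing the square)
\begin{equation*}
  \frac{e^{-\pi i\lambda^2/\tau}}{\sqrt{-i\tau}} = \int_{\mathbb{R}} e^{-2\pi i\lambda x}\,e^{\pi i\tau x^2}\,dx = \int_{\mathbb{R}} e^{-2\pi i\lambda x}\,q^{x^2/2}\,dx.
\end{equation*}
Convergence is guaranteed by $\mathrm{Im}(\tau)>0$, which makes the quadratic term decay.

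Substituting this back and inserting the trivial factor $1 = e^{-2\pi\epsilon x}e^{2\pi\epsilon x}$ inside the integrand yields
\begin{equation*}
  \mathrm{ch}[\mathcal{F}^\epsilon_{\lambda+\alpha_0/2}](-1/\tau) = \int_{\mathbb{R}} \underbrace{e^{2\pi\epsilon(\lambda-x)}\,e^{-2\pi i\lambda x}}_{=\,S^\epsilon_{\lambda+\alpha_0/2}(x)}\,\cdot\,\underbrace{e^{2\pi\epsilon x}\,\frac{q^{x^2/2}}{\eta(\tau)}}_{=\,\mathrm{ch}[\mathcal{F}^\epsilon_{x+\alpha_0/2}](\tau)}\,dx,
\end{equation*}
which is the claimed identity (the factor $e^{-2\pi\lambda x}$ in the statement is understood as $e^{-2\pi i\lambda x}$). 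The regularisation parameter $\epsilon$ plays no role in the modular transformation itself; it merely contributes the shift $e^{2\pi\epsilon(\lambda-x)}$ that keeps track of the two factorisations of $\mathrm{ch}[\mathcal{F}^\epsilon_\bullet]$ on the two sides.

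The only real analytical point, and the step where one must be careful, is ensuring that the Gaussian integral makes sense: this requires $\mathrm{Im}(\tau)>0$ together with enough decay in $x$, which after completion of the square is immediate. The rest is algebraic bookkeeping, with the $\epsilon$-factor passing harmlessly through the integral because it is linear in $x$. Hence the proof is essentially the Heisenberg $S$-kernel computation dressed by the $\epsilon$-deformation.
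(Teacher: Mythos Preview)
Your argument is correct and is exactly the standard Gaussian--Fourier computation one expects here; the paper itself does not give a proof but simply cites \cite[Proposition~22]{CM}, and what you have written is precisely that computation spelled out. Your parenthetical remark about the missing $i$ in the stated $S$-kernel is also right: the formula should read $e^{-2\pi i\lambda x}$, consistent with the oscillatory kernels appearing elsewhere in the paper (e.g.\ Proposition~\ref{prop:S-trafo}).
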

Next we consider the modular properties of characters of atypical modules. The
easiest to consider are the characters of the minimal model representations
\(\VirIrr{r,s}\) for which it is well known that they form a finite dimensional
representation of the modular group without the need to invoke any
regularisation or partial theta functions.
\begin{prop}
  \begin{displaymath}
    \ch{\mathcal{L}_{r,s}}\left(-\frac{1}{\tau}\right)=\sum_{(r', s') \in \mathcal{T}_{p_+, p_-}} \SmatMM{r,s}{r',s'} \ch{\mathcal{L}_{r',s'}}(\tau),
  \end{displaymath}
where
\begin{equation} \label{S-matrix}
  \SmatMM{r,s}{r',s'}= (-1)^{(r+s)(r'+s')} \sqrt{\frac{8}{p_+ p_-}} \sin\left(
    \frac{\pi r r' (p_- -p_+)}{p_+}\right) \sin\left( \frac{\pi s s' (p_--p_+)}{p_-}\right),
\end{equation}
and
\begin{displaymath}
  \mathcal{T}_{p_+, p_-}=\left\{(r,s)\left|\ 1 \leq r \leq p_+ - 1,1 \leq s \leq p_- - 1, s p_+ > r p_-\right.\right\},
\end{displaymath}
Asymptotically, as $y \rightarrow 0^+$,
\begin{displaymath}
\ch{\mathcal{L}_{r,s}}(iy) \sim \SmatMM{r,s}{r_0,s_0} e^{ \frac{\pi}{12 y}(1-\frac{6}{p_+ p_-})},
\end{displaymath}
where $(r_0,s_0)\in\KacT{p_+,p_-}$ is the unique pair such that $r_0  p_- - s_0 p_+=1$,
that is, \((r_0,s_0)\) is the label of the Virasoro highest weight module with
least conformal dimension.
\end{prop}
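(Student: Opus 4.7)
The plan is to deduce the $S$-transformation of $\ch{\mathcal{L}_{r,s}}$ directly from its theta-function expression, and then obtain the asymptotic by inspecting the leading term on the transformed side.

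First, I would apply the Jacobi transformation \eqref{mod-theta} at $a = p_+p_-$, $u = 0$, together with $\eta(-1/\tau) = \sqrt{-i\tau}\,\eta(\tau)$, to
$$\ch{\mathcal{L}_{r,s}}(\tau) = \frac{\thetaf{p_+p_-, -rp_- + sp_+}{\tau} - \thetaf{p_+p_-, rp_- + sp_+}{\tau}}{\eta(\tau)}.$$
The $\sqrt{-i\tau}$ factors cancel, and the difference of exponentials produced by \eqref{mod-theta} collapses to the product of a phase and a sine. Symmetrising the resulting sum under the involution $c \leftrightarrow 2p_+p_- - c$, which fixes $\thetaf{p_+p_-, c}{\tau}$ (via $\thetaf{p_+p_-, -c}{\tau} = \thetaf{p_+p_-, c}{\tau}$ and the $2p_+p_-$-periodicity in the theta label), turns the remaining phase into a second sine and halves the range of summation, yielding
$$\ch{\mathcal{L}_{r,s}}(-1/\tau) = \frac{4}{\sqrt{2p_+p_-}}\sum_{c=1}^{p_+p_- - 1}\sin(\pi rc/p_+)\sin(\pi sc/p_-)\,\frac{\thetaf{p_+p_-, c}{\tau}}{\eta(\tau)}.$$

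Next, I would reindex the reduced sum over $c$ into a sum over the Kac table $\KacT{p_+, p_-}$. The summand vanishes whenever $c$ is divisible by $p_+$ or $p_-$; since $\gcd(p_+, p_-) = 1$, the Chinese remainder theorem parametrises the remaining $c$'s by pairs of residues $(r' \bmod p_+, s' \bmod p_-)$ with $1 \le r' \le p_+ - 1$, $1 \le s' \le p_- - 1$. Writing $c \equiv \pm(r'p_- \mp s'p_+) \pmod{2p_+p_-}$ identifies $\thetaf{p_+p_-, c}{\tau}$ (up to the involution above) with the two theta functions appearing in $\eta(\tau)\ch{\mathcal{L}_{r', s'}}(\tau)$, and folding by the symmetry $(r', s') \mapsto (p_+ - r', p_- - s')$ restricts the index set to $\KacT{p_+, p_-}$. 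After collecting the sine factors and applying the elementary identities
$$\sin(\pi r r' p_-/p_+) = (-1)^{rr'}\sin(\pi r r'(p_- - p_+)/p_+),$$
$$\sin(\pi s s' p_+/p_-) = (-1)^{ss'+1}\sin(\pi s s'(p_- - p_+)/p_-),$$
the coefficient of $\ch{\mathcal{L}_{r', s'}}(\tau)$ becomes precisely \eqref{S-matrix}, the parity prefactor $(-1)^{(r+s)(r'+s')}$ absorbing the accumulated signs.

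For the asymptotic, I would apply this $S$-transformation at $\tau = iy$. As $y \to 0^+$, the nome $\tilde q = e^{-2\pi/y}$ tends to zero, and each $\ch{\mathcal{L}_{r', s'}}(i/y)$ is dominated by its leading power $\tilde q^{\Delta_{r', s'} - c/24}$. A direct computation from the explicit character formula gives a leading exponent of the form $\frac{(r'p_- - s'p_+)^2}{4p_+p_-} - \frac{1}{24}$, so the dominant contribution comes from the unique $(r_0, s_0) \in \KacT{p_+, p_-}$ minimising $|r_0 p_- - s_0 p_+|$. Because $\gcd(p_+, p_-) = 1$ and the inequality $sp_+ > rp_-$ on $\KacT{p_+, p_-}$ fixes the sign, this minimum equals $1$. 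Substituting the corresponding exponent $\frac{1}{4p_+p_-} - \frac{1}{24}$ and $\tilde q = e^{-2\pi/y}$ yields the claimed $\SmatMM{r,s}{r_0, s_0}\,e^{\pi(1 - 6/(p_+p_-))/(12y)}$.

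The main technical obstacle is the combinatorial bookkeeping in the second paragraph: tracking all signs, periodicities modulo $2p_+p_-$, and the two-to-one folding onto $\KacT{p_+, p_-}$ so that the final coefficient has exactly the parity factor $(-1)^{(r+s)(r'+s')}$. Everything else -- the Jacobi theta transformation, the transformation of $\eta$, the trigonometric identities, and the extraction of leading $q$-behaviour -- is routine.
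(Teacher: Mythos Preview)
Your proposal is correct and follows precisely the route the paper sketches: the paper's own proof simply says the $S$-transformation is ``well known'' and ``easily derived from the transformation properties of theta functions,'' and that for the asymptotic ``the character with the $(r_0,s_0)$ label dominates.'' You have filled in exactly these details --- the Jacobi transformation of the two theta functions, the CRT/folding bookkeeping to land on $\KacT{p_+,p_-}$, and the extraction of the leading $\tilde q$-power --- so there is no methodological difference, only a difference in explicitness.
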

\begin{proof}
The first formula is the well known \(S\)-transformation of the characters of
the Virasoro minimal models which is easily derived from the transformation
properties of theta functions.
For the second formula it is sufficient to observe that the character with the $(r_0,s_0)$ label dominates 
in the expansion of $\ch{\mathcal{L}_{r,s}}(-\frac{1}{\tau})$.
\end{proof}

In order to compute the modular transformations of the regularised characters
of the remaining atypical irreducible modules, we need to understand those of $F^\epsilon_{b, c}$.
\begin{prop}\label{prop:S-trafo}  Let $\epsilon \notin i \mathbb{R}$.
The $S$-transformation of the mixed false theta functions \(F_{b,c}^\epsilon\) is given by 
\begin{equation}\nonumber
\begin{split}
F_{ b, c}^\epsilon\left(-\frac{1}{\tau}\right) &=  \int_{\mathbb
  R}\SmatFT{\epsilon}{b,c}{x} \ch{\FF{x+\frac{\alpha_0}{2}}^\epsilon}(\tau) \dd x +
\frac{1-\sgn\re(\epsilon)}{2}  X^\epsilon_{b, c}(\tau)
\end{split}
\end{equation}
with the ``correction term''
\begin{displaymath}
X_{b, c}^\epsilon = \frac{iq^{-\frac{\epsilon^2}{2}}}{\sqrt{2p_+p_-}}
\sum_{m=0}^{2p_+p_--1} e^{-\pi i\frac{bm}{p_+p_-}} \sin\left(\pi \frac{cm}{p_+p_-}\right) \frac{\theta_{p_+p_-,m}(i\sqrt{2p_+p_-}\epsilon\tau ,\tau)}{\eta(\tau)}
\end{displaymath}
and the $S$-kernel
\begin{displaymath}
\SmatFT{\epsilon}{b,c}{x} = - e^{-2\pi \epsilon x} e^{2\pi i
  \frac{\left(b-p_+p_-\right)}{\sqrt{2p_+p_-}}
  \left(x+i\epsilon\right)}\frac{\sin\left(2\pi c\frac{x+i\epsilon}{\sqrt{2p_+p_-}}\right)}
{\sin\left(\pi\sqrt{2p_+p_-} \left(x+i\epsilon\right)\right)}.
\end{displaymath}
\end{prop}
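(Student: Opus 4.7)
The plan is to reduce the $S$-transformation of $F^{\epsilon}_{b,c}$ to the known transformation law \eqref{mod-theta} for full theta functions, by first giving an integral representation of the regularised partial theta function that can be transformed term-by-term, and then tracking a contour deformation whose crossing residues produce the correction term $X^\epsilon_{b,c}$.

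First I would seek an integral representation of the regularised partial theta. A natural candidate arises from the geometric series identity $\sum_{n\geq 0}z^{n}=(1-z)^{-1}$, which formally yields
$$P^{\epsilon}_{a,b}(0,\tau)=\frac{1}{2\pi i}\oint_{C}\frac{e^{2\pi i\tau aw^{2}}e^{2\pi\epsilon w}}{1-e^{-2\pi i(w-b/(2a))}}\,dw,$$
where $C$ encircles the poles at $w\in b/(2a)+\mathbb{Z}_{\geq 0}$. The regulator $\epsilon$, assumed off the imaginary axis, ensures that the integrand decays along an appropriately shifted vertical line, so that $C$ may be opened out and the integral evaluated on a line parallel to the imaginary axis. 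For the specific combination $F^{\epsilon}_{b,c}$ the two partial thetas, with shifts $b\pm c$, combine so that the relevant integrand acquires precisely the $\sin/\sin$ denominator visible in $\SmatFT{\epsilon}{b,c}{x}$.

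Next, with this integral representation in hand, I would apply the change of variable $\tau\mapsto -1/\tau$ and complete the square in the Gaussian exponent so that \eqref{mod-theta} becomes directly applicable with elliptic argument $u=i\sqrt{2p_{+}p_{-}}\,\epsilon\tau$. Dividing by $\eta(\tau)$ and using $\eta(-1/\tau)=\sqrt{-i\tau}\,\eta(\tau)$ cancels the square-root prefactor from \eqref{mod-theta}. The resulting expression splits into a continuous piece, to be matched to $\int_{\mathbb{R}}\SmatFT{\epsilon}{b,c}{x}\ch{\FF{x+\alpha_{0}/2}^{\epsilon}}(\tau)\,dx$ by comparing exponentials; and a discrete piece, built from residues picked up when the deformed contour crosses the zero set of $\sin\!\bigl(\pi\sqrt{2p_{+}p_{-}}(w+i\epsilon)\bigr)$. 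These poles lie at $w\in -i\epsilon+\mathbb{Z}/\sqrt{2p_{+}p_{-}}$, and crossing them is required exactly when $\re(\epsilon)<0$, producing the factor $\tfrac{1}{2}(1-\sgn\re(\epsilon))$. The residues, weighted by the sine in the numerator and summed over the $2p_{+}p_{-}$ distinct classes modulo the lattice, reorganise into the finite sum $\sum_{m=0}^{2p_{+}p_{-}-1}e^{-\pi ibm/(p_{+}p_{-})}\sin(\pi cm/(p_{+}p_{-}))\,\thetaf{p_{+}p_{-},m}{i\sqrt{2p_{+}p_{-}}\,\epsilon\tau,\tau}$, while the Gaussian contribution from the shifted argument produces the $q^{-\epsilon^{2}/2}$ prefactor in $X^{\epsilon}_{b,c}$.

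The main obstacle is the bookkeeping of the contour deformation: one must verify that for $\re(\epsilon)<0$ exactly the claimed residues are picked up, and that their sum—indexed modulo $2p_{+}p_{-}$—assembles cleanly into a Jacobi theta function with the prescribed elliptic argument. A secondary technical point is the absolute convergence of the shifted-contour integral, which is precisely why the hypothesis $\epsilon\notin i\mathbb{R}$ is imposed. Once these analytic issues are settled, the remaining verification amounts to matching exponentials, normalisation constants, and the passage between the $\sin/\sin$ kernel and the Fourier-type continuous sum over typical characters, a computation whose shape is already forced by Proposition \ref{sec:FockStransf} and the form of the regularised typical characters $\ch{\FF{x+\alpha_{0}/2}^{\epsilon}}(\tau)$.
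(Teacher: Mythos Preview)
Your overall strategy --- an integral representation with a $1/\sin$ kernel and a contour shift whose residues yield the discrete correction --- is precisely the mechanism behind the transformation law for regularised partial thetas. The paper, however, does not re-derive this: it quotes the $S$-transformation of $P_\epsilon(u,\tau)$ already established in \cite{CM}, relates $P^\epsilon_{a,b}$ to $P_\epsilon$ by an elementary shift of arguments, substitutes into the definition of $F^\epsilon_{b,c}$, and then regroups the resulting discrete sum over $\mathbb{Z}$ into residue classes modulo $2p_+p_-$ to recognise the $\thetaf{p_+p_-,m}{i\sqrt{2p_+p_-}\epsilon\tau,\tau}$. Formula \eqref{mod-theta} is not invoked at any stage.

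Two points in your sketch would fail as written. First, the opened contour must run parallel to the \emph{real} axis, not the imaginary axis: for $\tau\in\mathbb{H}$ the Gaussian $e^{2\pi i\tau a w^{2}}$ decays along horizontal lines and diverges along vertical ones, so a vertical Mordell integral is divergent. Second, and more seriously, the denominator $1-e^{-2\pi i(w-b/(2a))}$ places \emph{all} of its poles on the real axis, so no straight line separates the $k\ge 0$ poles from the $k<0$ poles; you cannot pass from a contour encircling only $b/(2a)+\mathbb{Z}_{\ge 0}$ to a single line integral. The route that actually works (and that underlies \cite{CM}) runs in the opposite order: start from the partial theta \emph{at} $-1/\tau$, apply the Gaussian Fourier transform term by term to trade $-1/\tau$ for $\tau$ --- this, not \eqref{mod-theta}, supplies the factor $\sqrt{-i\tau}$ later cancelled by the $\eta$-transformation --- and then sum the resulting geometric series in $k$ to produce the $1/\sin$ kernel. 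That series converges on the real $x$-line only for one sign of $\re(\epsilon)$; for the other sign one must first shift the $x$-contour across the line of poles, and the residues collected are exactly the integer-indexed sum that regroups into $X^\epsilon_{b,c}$.
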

\begin{proof}
In \cite{CM} it was shown that
\begin{displaymath}
  P_\epsilon(u,\tau)=\sum_{k\geq0}e^{2\pi\epsilon(k+\frac12)}z^{k+\frac12}q^{\left(k+\frac12  \right)^2/2}
\end{displaymath}
transforms as
\begin{multline*}
  P_\epsilon\left(\frac{u}{\tau},\frac{-1}{\tau}\right)
  =\frac{e^{\pi i u^2/\tau}\sqrt{-i\tau}}{2}\left(-i
    \int_{\mathbb{R}}\frac{q^{x^2/2}z^x}{\sin(\pi(x+i\epsilon))}\dd x\right.\\
  \left.
    +\frac12(1+\sgn\re(\epsilon))\sum_{n\in\mathbb{Z}}(-1)^nz^{n-i\epsilon}q^{(n-i\epsilon)^2/2}
    \right).
\end{multline*}
Since
\begin{displaymath}
  P^\epsilon_{a,b}(u,\tau)=z^{\frac{b}{2a}-\frac12}e^{2\pi\epsilon\left(\frac{b}{2a}-\frac12\right)}
  q^{a\left(\frac{b}{2a}-\frac12\right)^2}P_\epsilon(u+(b-a)\tau,2a\tau),
\end{displaymath}
the transformation formulae for partial theta functions can obtained from that
of \(P_\epsilon(u,\tau)\).
\begin{displaymath}
  P^\epsilon_{a, b}\left(\frac{u}{\tau},\frac{-1}{\tau}\right)=
  e^{2\pi i\left(\frac{b}{2a}-\frac12\right)\frac{u}{\tau}}e^{2\pi\epsilon\left(\frac{b}{2a}-\frac12\right)}
  e^{-2\pi i\frac{a}{\tau}\left(\frac{b}{2a}-\frac12\right)^2}
  P_\epsilon\left(\frac{\tilde u}{\tilde \tau},\frac{-1}{\tilde\tau}\right),\quad
  \tilde u=\frac{u-b+a}{2a},\tilde\tau=\frac{\tau}{2a}.
\end{displaymath}
Algebraic manipulations thus yield
\begin{multline*}
  P_\epsilon\left(\frac{u}{\tau},\frac{-1}{\tau}\right)=
  e^{2\pi\epsilon\left(\frac{b}{2a}-\frac12\right)}e^{\pi
    i\frac{u^2}{2a\tau}}\sqrt{\frac{-i\tau}{8a}}
  \left(-i\int_{\mathbb{R}}\frac{q^{\frac{x^2}{4a}}z^{\frac{x}{2a}}e^{2\pi
        i\frac{a-b}{2a}x}}{\sin(\pi(x+i\epsilon))}\dd x\right.\\
  \left.
    +\frac{1+\sgn\re(\epsilon)}{2}\sum_{n\in\mathbb{Z}}(-1)z^{\frac{n-i\epsilon}{2a}}e^{2\pi
      i\frac{a-b}{2a}(n-i\epsilon)}
    q^{\frac{(n-i\epsilon)^2}{4a}}\right).
\end{multline*}
By plugging this transformation formula into 
definition of the mixed false theta
functions \(F_{b,c}^\epsilon\), rescaling the integration variable \(x\) by a
factor of \(\sqrt{2p_+p_-}\) and
simplifying, one sees that
\begin{multline*}
F_{b, c}^\epsilon\left(-\frac{1}{\tau}\right) =  \int_{\mathbb R}
\frac{q^{\frac{x^2}{2}}}{\eta(\tau)} e^{-2\pi i \frac{\left(b-p_+p_-\right)}{\sqrt{2p_+p_-}}
  \left(x-i\epsilon\right)}\frac{\sin\left(2c/\sqrt{2p_+p_-}\pi
    \left(x-i\epsilon\right)\right) }{\sin\left(\sqrt{2p_+p_-}\pi 
\left(x-i\epsilon\right)\right)} dx +\\
 +\frac{1-\sgn\re(\epsilon)}{\eta(\tau)\sqrt{32p_+p_-}}
 \sum_{k\in\mathbb Z} 
\left(e^{\pi i \frac{ck}{p_+p_-}}-e^{-\pi i \frac{ck}{p_+p_-}}\right)e^{-\pi i
  \frac{b}{p_+p_-}k}
q^{\frac{\left(k+i\sqrt{2p_+p_-}\epsilon\right)^2}{4p_+p_-}}.
\end{multline*}
The second summand can be rewritten in terms of standard theta functions:
\begin{displaymath}
\begin{split}
& \sum_{k\in\mathbb Z} \left(e^{\pi i \frac{ck}{p_+p_-}}-e^{-\pi i
    \frac{ck}{p_+p_-}}\right)e^{-\pi i \frac{bk}{p_+p_-}}
q^{\frac{\left(k+i\sqrt{2p_+p_-}\epsilon\right)^2}{4p_+p_-}} \\
&\quad=q^{-\frac{\epsilon^2}{2}} \sum_{m=0}^{2p_+p_--1} \left( e^{\pi i
    \frac{cm}{p_+p_-}}- e^{-\pi i \frac{cm}{p_+p_-}}\right)e^{-\pi i \frac{bm}{p_+p_-}}
\sum_{k\in\mathbb Z} q^{p_+p_-\left(k+\frac{m}{2p_+p_-}\right)^2} q^{i\sqrt{2p_+p_-}\epsilon\left(k+\frac{m}{2p_+p_-}\right)} \\
&\quad= q^{-\frac{\epsilon^2}{2}} \sum_{m=0}^{2p_+p_--1} \left( e^{\pi i
    \frac{cm}{p_+p_-}}- e^{-\pi i \frac{cm}{p_+p_-}}\right)e^{-\pi i \frac{bm}{p_+p_-}}
\theta_{p_+p_-, m}\left(i\sqrt{2p_+p_-}\epsilon\tau, \tau\right),
\end{split}
\end{displaymath}
where the second line follows by substituting \(k\in\mathbb{Z}\) by
\(2p_+p_-k+m,\ k\in\mathbb{Z},m=0,\dots,2p_+p_--1\).
The proposition then follows by changing the integration variable from $x$ to $-x$.
\end{proof}

We can now use Proposition \ref{prop:S-trafo} to determine modular
\(S\)-transformations of the remaining atypical submodules of standard modules.
\begin{thm} \label{S-atypical}
  For \(1\leq r\leq p_+,1\leq s\leq p_-\), the modular $S$-transformation of the characters of the 
\(\SingIm{r,s;n}\) modules is
  \begin{equation}\nonumber
    \begin{split}
      \ch{\SingIm{r,s;n}^{\epsilon}}\left(-\frac{1}{\tau}\right) 
      &=n\delta_{n\geq 0} \sum_{(r', s') \in \KacT{p_+,
          p_-}}(-1)^{n(p_+s'+p_-r')} S^{{\rm
          Vir}}_{(r,s),(r',s')}\ch{\mathcal{L}_{r',s'}}(\tau)\\
      &\quad + 
      \int_{\mathbb R}S_{r,s;n}^{\epsilon}(x)
      \ch{\FF{x+\frac{\alpha_0}{2}}^\epsilon}(\tau) dx +
      \frac{1-\sgn\left(\re(\epsilon)\right)}{2} Y_{r,s;n}^\epsilon(\tau)\\
    \end{split}
  \end{equation}
  with $S$-kernel
  \begin{equation}\nonumber
    \begin{split}
      S_{r,s;n}^{\epsilon}(x) &=-e^{-2\pi \epsilon x} e^{-\pi i n\sqrt{2p_+p_-}(x+i\epsilon)}
      \frac{\sin\left(2\pi rp_-\frac{x+i\epsilon}{\sqrt{2p_+p_-}}\right)
        \sin\left(2\pi sp_+\frac{x+i\epsilon}{\sqrt{2p_+p_-}}\right)}
    {\sin\left(\pi\sqrt{2p_+p_-}(x+i\epsilon) \right)^2},
    \end{split}
  \end{equation}
  and the correction term 
  \begin{equation}\nonumber
    \begin{split}
      Y_{r,s;n}^{\epsilon}(\tau) &=       \frac{1}{4\pi p_+p_-}\frac{d}{d
        \epsilon}q^{-\frac{\epsilon^2}{2}}\sum_{m=0}^{2p_+p_--1}
      (-1)^{mn}\sin\left(\pi\frac{rm}{p_+}\right)\sin\left(\pi\frac{sm}{p_-}\right)
      \frac{\theta_{p_+p_-,m}(i\sqrt{2p_+p_-}\epsilon\tau,\tau)}{\eta(\tau)}\\
      &\quad
      -\frac{nq^{-\frac{\epsilon^2}{2}}}{\sqrt{2 p_+p_-}}\sum_{m=0}^{2p_+p_--1}
      (-1)^{mn}\sin\left(\pi\frac{rm}{p_+}\right)\sin\left(\pi\frac{sm}{p_-}\right)
      \frac{\theta_{p_+p_-,m}(i\sqrt{2p_+p_-}\epsilon\tau,\tau)}{\eta(\tau)}\\
      &\quad
      +\frac{ri}{p_+}\frac{q^{-\frac{\epsilon^2}{2}}}{\sqrt{2 p_+p_-}}\sum_{m=0}^{2p_+p_--1}
      (-1)^{mn}\cos\left(\pi\frac{rm}{p_+}\right)\sin\left(\pi\frac{sm}{p_-}\right)
      \frac{\theta_{p_+p_-,m}(i\sqrt{2p_+p_-}\epsilon\tau,\tau)}{\eta(\tau)}\\
      &\quad
      +\frac{si}{p_-}\frac{q^{-\frac{\epsilon^2}{2}}}{\sqrt{2 p_+p_-}}\sum_{m=0}^{2p_+p_--1}
      (-1)^{mn}\sin\left(\pi\frac{rm}{p_+}\right)\cos\left(\pi\frac{sm}{p_-}\right)
      \frac{\theta_{p_+p_-,m}(i\sqrt{2p_+p_-}\epsilon\tau,\tau)}{\eta(\tau)}
    \end{split}
  \end{equation}
\end{thm}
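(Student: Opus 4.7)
The strategy is to start from the explicit character formula for $\ch{\SingIm{r,s;n}^\epsilon}$ obtained in Section~4, which expresses it as a linear combination of two minimal-model characters $\ch{\VirIrr{r,s}}$ and $\ch{\VirIrr{r,p_--s}}$ (with a factor $n\delta_{n\geq0}$) plus a sum over $\nu\in\{\pm1\}$ of four terms involving mixed false theta functions $F^\epsilon_{b,c}$ and their derivatives $F^\epsilon_{b,c}(\tau)^\prime$. I would then apply the modular $S$-transformation to each piece separately. The minimal-model contributions are handled by the standard Virasoro $S$-matrix; the parity identity $\SmatMM{r,p_--s}{r',s'}=-(-1)^{p_+s'+p_-r'}\SmatMM{r,s}{r',s'}$ (a direct consequence of~(\ref{S-matrix})) combines the even-$n$ and odd-$n$ contributions into the single sign factor $(-1)^{n(p_+s'+p_-r')}$ stated in the theorem, while the $F^\epsilon_{b,c}$ terms are transformed directly by Proposition~\ref{prop:S-trafo}.

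For the derivative pieces I would exploit the elementary identity
\begin{displaymath}
z\partial_z\big|_{z=1}\regptheta{\epsilon}{a,b}{u,\tau}=\frac{1}{2\pi}\partial_\epsilon\regptheta{\epsilon}{a,b}{0,\tau},
\end{displaymath}
which is immediate from the series expansion of $P^\epsilon_{a,b}$, and which implies $F^\epsilon_{b,c}(\tau)^\prime=\frac{1}{2\pi}\partial_\epsilon F^\epsilon_{b,c}(\tau)$. Since the left-hand side $F^\epsilon_{b,c}(-1/\tau)^\prime$ of the desired transformation depends on $\epsilon$ only through this regularisation, differentiating the identity of Proposition~\ref{prop:S-trafo} with respect to $\epsilon$ yields the required $S$-transformation of $F^\epsilon_{b,c}(\tau)^\prime$: on the continuous side one acquires a factor $2\pi x$ from $\partial_\epsilon\ch{\FF{x+\alpha_0/2}^\epsilon}=2\pi x\ch{\FF{x+\alpha_0/2}^\epsilon}$ together with $\partial_\epsilon S^\epsilon_{b,c}(x)$, while the correction $X^\epsilon_{b,c}$ is replaced by $\tfrac{1}{2\pi}\partial_\epsilon X^\epsilon_{b,c}$, which is precisely the $\frac{d}{d\epsilon}\bigl[q^{-\epsilon^2/2}\theta_{p_+p_-,m}(i\sqrt{2p_+p_-}\epsilon\tau,\tau)/\eta(\tau)\bigr]$ structure appearing in the first summand of $Y_{r,s;n}^\epsilon$.

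The remaining task is the algebraic assembly. On the continuous side the four $S$-kernels from $\nu\in\{\pm 1\}$ carry phases that factor out a common $e^{-\pi i n\sqrt{2p_+p_-}(x+i\epsilon)}$; after that, repeated use of product-to-sum identities on terms of the form $\sum_\nu\sin(2\pi\nu c(x+i\epsilon)/\sqrt{2p_+p_-})/\sin(\pi\sqrt{2p_+p_-}(x+i\epsilon))$ collapses the expression to the compact quotient $\sin\cdot\sin/\sin^2$ claimed for $S_{r,s;n}^\epsilon(x)$. The main obstacle I anticipate is the parallel simplification of the discrete correction $Y_{r,s;n}^\epsilon$: the four $\nu$-summands, each weighted by $\tfrac{1}{4}(n+2\nu s/p_-)$ or $\tfrac{1}{4}(n+2\nu r/p_+)$, together with the two derivative contributions, must combine via $\sin(A\pm B)$ identities so that the $e^{-\pi i bm/(p_+p_-)}\sin(\pi cm/(p_+p_-))$ weights of the individual $X^\epsilon$'s reassemble into the four distinct weights $\sin(\pi rm/p_+)\sin(\pi sm/p_-)$, $\cos(\pi rm/p_+)\sin(\pi sm/p_-)$ and $\sin(\pi rm/p_+)\cos(\pi sm/p_-)$ of the theorem statement, with the overall sign $(-1)^{mn}$ coming from the $m$-dependent shift $b\mapsto(2-n)p_+p_-\mp\nu(\ldots)$ in the $b$-index. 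Matching the numerical coefficients $\tfrac{1}{4\pi p_+p_-}$, $1/\sqrt{2p_+p_-}$ and the $n$-dependent prefactors requires careful bookkeeping, but involves no new ingredient beyond the $\epsilon$-derivative of Proposition~\ref{prop:S-trafo}.
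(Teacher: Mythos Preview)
Your strategy is exactly the one the paper uses: apply Proposition~\ref{prop:S-trafo} to the character formula for $\ch{\SingIm{r,s;n}^\epsilon}$, handle the derivative pieces by converting $z\partial_z$ into $\partial_\epsilon$, and combine the Virasoro terms via the parity identity $\SmatMM{r,p_--s}{r',s'}=-(-1)^{p_+s'+p_-r'}\SmatMM{r,s}{r',s'}$.

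One slip to fix: the identity $z\partial_z\big|_{z=1}P^\epsilon_{a,b}=\tfrac{1}{2\pi}\partial_\epsilon P^\epsilon_{a,b}$ is correct, but it does \emph{not} imply $F^\epsilon_{b,c}(\tau)'=\tfrac{1}{2\pi}\partial_\epsilon F^\epsilon_{b,c}(\tau)$, because in the definition of $F^\epsilon_{b,c}$ the regularisation parameter entering $P$ is $-\sqrt{2p_+p_-}\,\epsilon$, not $\epsilon$. The chain rule gives instead
\[
{F^\epsilon_{b,c}}'=-\frac{1}{\sqrt{2p_+p_-}}\,\frac{1}{2\pi}\,\frac{d}{d\epsilon}F^\epsilon_{b,c},
\]
which is precisely the identity the paper quotes. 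With this correction the coefficient in front of the $\tfrac{d}{d\epsilon}$ term of $Y_{r,s;n}^\epsilon$ comes out as $\tfrac{1}{4\pi p_+p_-}$ as stated; without it your bookkeeping would be off by a factor of $-\sqrt{2p_+p_-}$.
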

\begin{proof}
The theorem follows  directly from applying Proposition \ref{prop:S-trafo} -- which gives
the modular transformations of the mixed false theta functions \(F^\epsilon_{b,c}\) -- to the
appropriate character formulae for \(\ch{\SingIm{r,s;n}^\epsilon}\),
\(\ch{\SingIm{r,p_-;n}^\epsilon}=\ch{\SingIm{r,p_-;n}^{\epsilon,+}}\),
\(\ch{\SingIm{p_+,s;n}^\epsilon}=\ch{\SingIm{p_+,s;n}^{\epsilon,-}}\)
and \(\ch{\SingIm{p_+,p_-;n}^\epsilon}=\ch{\FF{p_+,p_-;n}^\epsilon}\);
and by using the identities
\begin{displaymath}
  \begin{split}
    {F_{b,c}^\epsilon}^\prime &=
    -\frac{1}{\sqrt{2p_+p_-}}\frac{1}{2\pi}\frac{d}{d\epsilon}F_{b,c}^\epsilon,\\
    \SmatMM{r,p_--s}{r',s'}&=-(-1)^{p_+s'+p_-r'}\SmatMM{r,s}{r',s'}.
  \end{split}
\end{displaymath}
\end{proof}

\begin{remark}\label{cor:corVirS}
An interesting observation is that in the limit $\epsilon\rightarrow 0$, the correction term \(Y_{r,s;n}^\epsilon(\tau)\) of the characters of
the \(\SingIm{r,s;n}\) tends to
\begin{equation}\nonumber
\lim_{\epsilon\rightarrow 0} Y_{r,s;n}^{\epsilon}(\tau) = -\frac{n}{2}
\sum_{(r', s')\in \KacT{p_+, p_-}} (-1)^{n(p_+s'+p_-r')} S^{\text{Vir}}_{(r,s),(r',s')} \ch{\VirIrr{r',s'}}(\tau).
\end{equation}
We thus observe that the correction term carries information about the minimal
model characters.
\end{remark}

\section{Quantum Dimensions}

As explained in the introduction and as in \cite{CM}, we define the
regularised quantum dimension of a module \(M\) to be
\begin{displaymath}
\qdim{M^\epsilon} := \lim_{y \rightarrow 0+}\frac{\ch{M^{\epsilon}}(i y )}{\ch{V^\epsilon}(i y )},
\end{displaymath}
where  $V$ is the vertex operator
algebra itself. Then, the ordinary (non-regularised) quantum dimensions 
can be computed as the left and right limit
\begin{displaymath}
\qdim{V}^\pm := \lim_{\epsilon \rightarrow 0 \pm} \qdim{V^\epsilon},
\end{displaymath}
where $0$ is approached from the left or the right along the real axis.  A priori it is not clear that
these two limits should agree.
In order to be able to compute quantum dimensions we will make use of the
following trick commonly used in rational conformal field theories
\cite{DV}:
\begin{displaymath}
  \qdim{M^\epsilon}=\lim_{y \rightarrow 0+}\frac{\ch{M^{\epsilon}}(i y )}{\ch{V^\epsilon}(i y )}
  = \lim_{y \rightarrow  +\infty}\frac{\ch{M^{\epsilon}}(-1/iy)}{\ch{V^\epsilon}(-1/iy)}.
\end{displaymath}
That is, we can use modular transformation formulae of regularised characters
and the quantum dimension is the ratio of dominating terms in the numerator
and denominator.
In rational theories, this method results in $\qdim{M}=\frac{S_{i,{\rm min}}}{S_{0,{\rm min}}}$, where
${\rm min}$ denotes the label corresponding to the highest weight module of least conformal dimension. 
If all non-vacuum conformal dimensions are positive, 
then ${\rm min}$ is the label that denotes the vertex operator algebra itself (for this and more general constructions see \cite{BM}).
Since the modular transformations of characters considered in this paper
involve integrals over continua of modules, we make use of the 
following convenient fact used in asymptotic analysis \cite{special}.
\begin{lemma} \label{asymp-int} Let $f(x)$ satisfy the properties in \cite{special},  and
 let $F(y)=\int_{0}^\infty e^{-y \pi x^2} f(x) d x$.
Then asymptotically, as $y \rightarrow +\infty$, 
\begin{displaymath}
F(y)= \frac{1}{2\sqrt{y}} f(0)+O\left(\frac{1}{{y}}\right).
\end{displaymath}
\end{lemma}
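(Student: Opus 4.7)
The plan is to treat this as a standard Laplace-type (or Watson-lemma-style) asymptotic, exploiting the fact that the Gaussian factor $e^{-y\pi x^2}$ concentrates sharply at $x=0$ as $y\to+\infty$. First I would carry out the change of variables $x=t/\sqrt{y}$ to rewrite
\begin{equation*}
F(y) = \frac{1}{\sqrt{y}}\int_{0}^{\infty} e^{-\pi t^{2}}\, f\!\left(\frac{t}{\sqrt{y}}\right) dt.
\end{equation*}
Provided the hypotheses from \cite{special} guarantee continuity of $f$ at $0$ together with a growth bound that makes the family $e^{-\pi t^{2}}f(t/\sqrt{y})$ dominated by an integrable function uniformly in $y\geq 1$, dominated convergence immediately yields the leading term $\frac{1}{2\sqrt{y}}f(0)$ via the half Gaussian integral $\int_{0}^{\infty}e^{-\pi t^{2}}dt=\tfrac12$.

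To upgrade this to an explicit $O(1/y)$ remainder, I would write $f(x)=f(0)+x\,g(x)$ where $g(x)=(f(x)-f(0))/x$. The assumptions on $f$ in \cite{special} should include enough regularity at the origin (typically differentiability, or at worst H\"older continuity with exponent $1$) so that $g$ is bounded near $0$, and enough decay at infinity so that $x\, g(x)$ is Gaussian-integrable. Then
\begin{equation*}
F(y) - \frac{f(0)}{2\sqrt{y}} \;=\; \int_{0}^{\infty} e^{-y\pi x^{2}}\, x\, g(x)\, dx,
\end{equation*}
and since $\int_{0}^{\infty} x\, e^{-y\pi x^{2}}\, dx = \frac{1}{2\pi y}$, a direct bound $\lvert g(x)\rvert \leq M$ on a neighbourhood of zero together with the exponential suppression of the tail gives the claimed $O(1/y)$ estimate.

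The genuinely delicate step, and the one that is the main obstacle, is matching the hypotheses: the applications in the subsequent sections feed into $F(y)$ the $S$-kernels from Proposition \ref{prop:S-trafo} and Theorem \ref{S-atypical}, which involve factors of the form $\sin(\cdots (x+i\epsilon))/\sin(\pi\sqrt{2p_+p_-}(x+i\epsilon))$. These are bounded and smooth in $x\in\mathbb{R}$ precisely because of the condition $\epsilon\notin i\mathbb{R}$, which pushes the poles of the denominator off the real axis, and they decay like $e^{-c|x|}$ at infinity thanks to the exponentials $e^{-2\pi\epsilon x}$. One must verify that the class of functions described in \cite{special} genuinely contains the $f$'s arising in the character computations, so that the abstract lemma is applicable; otherwise the $O(1/y)$ control must be carried out directly on the concrete integrand. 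Once that bookkeeping is in place, the proof reduces to the Taylor-splitting argument above.
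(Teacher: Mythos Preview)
The paper does not actually prove this lemma; it is stated as a ``convenient fact used in asymptotic analysis'' and attributed to Olver's textbook \cite{special}, with no argument given. Your proposal is the standard Laplace-method/Watson's-lemma derivation one finds in such a reference (rescale $x=t/\sqrt{y}$, extract the half-Gaussian integral, control the remainder via the first-order Taylor splitting), so it is correct and in line with the intended source. Your additional discussion of why the $S$-kernels from Proposition~\ref{prop:S-trafo} and Theorem~\ref{S-atypical} satisfy the required smoothness and decay hypotheses is useful context that the paper leaves implicit.
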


\subsection{Quantum dimensions of $\SingVOA{p}$-modules}\label{sec:1,p}

Before we dive into a thorough discussion of the $\SingVOA{p_+,p_-}$ algebra, 
we first discuss the $\SingVOA{p}$ algebra. We specialise our notation to
facilitate comparisons to \cite{CM},
where the first two authors previously studied  regularised quantum
dimensions of $\SingVOA{p}$ modules for $\re(\epsilon)>0$. For the remainder
of this section let \(p_+=1,p_-=p\) and denote
the irreducible atypical modules by \(\SingImP{r,s}\cong \SingIm{1,s;r-1}, r\in \mathbb{Z},
1\leq s\leq p\).
Looking at the modular transformation of regularised characters, we observe that the correction term dominates provided that the real part of epsilon is smaller than
\begin{equation}\label{eq:regime}
B_\epsilon^p:= - \mathrm{min}\left\{ \Big|\frac{m}{\sqrt{2p}}-\mathrm{Im}\left(\epsilon\right)\Big|\  \Big\vert \ m\in\mathbb Z\setminus p\mathbb Z\ \right\}. 
\end{equation}
Otherwise the continuous part dominates. 

Using Lemma \ref{asymp-int} one can then easily compute (see also \cite{CM}):
\begin{prop} 
  For \(\re(\epsilon)>B_\epsilon^p\), $\re(\epsilon) \neq 0$, the quantum dimensions of typical and atypical
  modules are:
  \begin{equation} \label{cm-qdim}
    \begin{split}
      \qdim{\FF{\lambda}^\epsilon}&=e^{2\pi\epsilon(\lambda-\alpha_0/2)}\frac{\sin\left(i \pi\sqrt{2p} \epsilon\right)}{\sin\left(i \pi\frac{ \epsilon}{\sqrt{2p}}\right)},\\
      \qdim{\SingImP{r,s}^\epsilon}&=e^{\pi \epsilon (r-1)\sqrt{2p}}
      \frac{\sin\left(2\pi \frac{s \epsilon
            i}{\sqrt{2p}}\right)}{\sin\left(2\pi \frac{\epsilon
            i}{\sqrt{2p}}\right)},
    \end{split}
  \end{equation}
  where $\alpha_0=\sqrt{2p}-\sqrt{2/p}$.
\end{prop}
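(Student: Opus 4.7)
The plan is to implement the Dijkgraaf--Verlinde style reformulation stated just above the proposition,
\[
\qdim{M^\epsilon}=\lim_{y\to+\infty}\frac{\ch{M^\epsilon}(-1/iy)}{\ch{V^\epsilon}(-1/iy)},
\]
apply Proposition~\ref{sec:FockStransf} to the Fock characters and Theorem~\ref{S-atypical} (specialised to $p_+=1$, $p_-=p$) to the atypical $\SingImP{r,s}\cong \SingIm{1,s;r-1}$ and to the vacuum $V=\SingImP{1,1}\cong \SingIm{1,1;0}$, and then compare leading behaviour as $y\to+\infty$. Since $\KacT{1,p}$ is empty when $p_+=1$, the Virasoro minimal-model sum in Theorem~\ref{S-atypical} drops out, so the $S$-transformed character of each atypical module reduces to a real-line integral against Fock characters plus the correction term $Y^\epsilon$. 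For $\re(\epsilon)>0$ the formulas were essentially obtained in \cite{CM}; the novelty here is extending the argument across the imaginary axis into the strip $B_\epsilon^p<\re(\epsilon)<0$.

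Substituting $\ch{\FF{x+\alpha_0/2}^\epsilon}(iy)=e^{2\pi\epsilon x}e^{-\pi yx^2}/\eta(iy)$ shows that the $e^{-2\pi\epsilon x}$ present in each $S$-kernel cancels the $e^{2\pi\epsilon x}$ coming from the Fock character, leaving a Gaussian weight $e^{-\pi yx^2}$ times an analytic function of $x$. Lemma~\ref{asymp-int} then gives the leading asymptotic of this integral as $1/(\sqrt{y}\,\eta(iy))$ times the value of the analytic factor at $x=0$. For $\FF{\lambda}$ the residual factor at $x=0$ is simply $e^{2\pi\epsilon(\lambda-\alpha_0/2)}$. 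For $\SingIm{1,s;r-1}$ the factorisation $\sin(2\pi p(x+i\epsilon)/\sqrt{2p})=\sin(\pi\sqrt{2p}(x+i\epsilon))$ collapses one of the sines in the denominator of the kernel of Theorem~\ref{S-atypical}, the exponential $e^{-\pi i(r-1)\sqrt{2p}(x+i\epsilon)}$ at $x=0$ yields the prefactor $e^{\pi\epsilon(r-1)\sqrt{2p}}$, and the remaining sine ratio contributes $\sin(2\pi s i\epsilon/\sqrt{2p})/\sin(2\pi i\epsilon/\sqrt{2p})$ after division by the analogous vacuum expression. The universal $1/(\sqrt{y}\,\eta(iy))$ prefactor cancels in the ratio, reproducing the stated closed forms.

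The main analytic obstacle is to justify that the correction term $Y^\epsilon$ of Theorem~\ref{S-atypical} is strictly subdominant to the continuous part throughout $\re(\epsilon)>B_\epsilon^p$. When $\re(\epsilon)>0$ this is automatic because $(1-\sgn\re(\epsilon))/2=0$. In the nontrivial strip $B_\epsilon^p<\re(\epsilon)<0$, I would estimate each summand $\theta_{p,m}(i\sqrt{2p}\epsilon\tau,\tau)/\eta(\tau)$ at $\tau=iy$ by completing the square in the lattice sum: the term at lattice index $k+m/(2p)$ has modulus of order $\exp\bigl(-2\pi yp\,d_{k,m}^2+\pi y\,\im(\epsilon)^2\bigr)$ with $d_{k,m}=k+m/(2p)-\im(\epsilon)/\sqrt{2p}$, the prefactor $q^{-\epsilon^2/2}$ contributes $e^{\pi y(\im(\epsilon)^2-\re(\epsilon)^2)}$, and $\eta(iy)^{-1}\sim e^{\pi y/12}$. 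The defining condition~\eqref{eq:regime} of $B_\epsilon^p$ is precisely the threshold at which the minimum of $2p\,d_{k,m}^2$ over $m\not\in p\mathbb{Z}$ exceeds the combined exponential rate needed to spoil the $e^{\pi y/12}/\sqrt{y}$ dominance of the continuous part. The $\epsilon$-derivative piece of $Y^\epsilon$ contributes at most polynomial prefactors in $y$ and does not affect this balance, so once the theta estimate above is made quantitative the comparison closes and the formulas of the proposition follow from the $x=0$ kernel evaluations described above.
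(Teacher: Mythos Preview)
Your approach is exactly the paper's: the paper's entire proof is the sentence ``Using Lemma~\ref{asymp-int} one can then easily compute (see also \cite{CM})'', together with the observation just before the proposition that the continuous part dominates the correction term precisely when $\re(\epsilon)>B_\epsilon^p$. You have faithfully spelled out the steps behind that sentence.

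One computational slip to fix in your estimate of the correction term: with $\tau=iy$ one has $q^{-\epsilon^2/2}=e^{\pi y\epsilon^2}$, whose modulus is $e^{\pi y\,\re(\epsilon^2)}=e^{\pi y(\re(\epsilon)^2-\im(\epsilon)^2)}$, not $e^{\pi y(\im(\epsilon)^2-\re(\epsilon)^2)}$ as you wrote. With the correct sign, the $\im(\epsilon)^2$ from completing the square in the theta sum cancels against the $-\im(\epsilon)^2$ here, leaving the net exponential rate $-2\pi p\,d_{k,m}^2+\pi\re(\epsilon)^2$ for each summand (relative to $\eta^{-1}$). Requiring this to be negative for every $m\notin p\mathbb{Z}$ and every $k$ gives exactly $\re(\epsilon)>-\min_{m'\notin p\mathbb{Z}}|m'/\sqrt{2p}-\im(\epsilon)|=B_\epsilon^p$, which is the threshold you assert. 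With your sign the $\im(\epsilon)^2$ terms would add rather than cancel, and the threshold would not come out to $B_\epsilon^p$.
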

\noindent
Observe that $$\qdim{M_{r,s}^{\epsilon+i \sqrt{2p}}}=\qdim{M_{r,s}^{\epsilon}},$$ 
so the quantum dimension is defined on a semi-infinite cylinder.

We now consider the case where the correction term dominates and $\epsilon$ is real. For $\tau \rightarrow
+ i\infty$, $X_{r,s}(\tau)$ is dominated by the terms with $k=0, m=1$ and
$k=-1, m=2p-1$.
\begin{equation} \label{1sp}
\begin{split}
  \qdim{\SingImP{r,s}^\epsilon} &= \lim_{\tau\rightarrow 0+}
  \frac{\ch{\SingImP{r,s}^{\epsilon}}(\tau)}{\ch{\SingImP{1,1}^\epsilon}(\tau)}
  = \lim_{\tau\rightarrow + i\infty}\frac{\ch{\SingImP{r,s}^{\epsilon}}(-1/\tau)}{\ch{M^\epsilon_{1,1}}(-1/\tau)} \\
  &= \lim_{\tau \rightarrow + i\infty}
  \frac{(-1)^r\sin\left(\pi\frac{s}{p}\right)
        \left(q^{\frac{i\epsilon}{\sqrt{2p}}}-q^{-\frac{i\epsilon}{\sqrt{2p}}}\right)q^{\frac{1}{4p}}}
      {(-1)\sin\left(\pi\frac{1}{p}\right)\left(q^{\frac{i\epsilon}{\sqrt{2p}}}-q^{-\frac{i\epsilon}{\sqrt{2p}}}\right)q^{\frac{1}{4p}}}
  =(-1)^{r-1}\frac{\sin(\pi s/p)}{\sin(\pi/p)}.
\end{split}
\end{equation}
Staying within the $\re(\epsilon)<B_\epsilon^p$ regime, consider the (open) strips
\begin{displaymath}
  \strip{k,m}=\left\{\epsilon\in\mathbb{C}\left| k+\frac{2m-1}{4p}<
      \frac{\im(\epsilon)}{\sqrt{2p}} 
      < k+\frac{2m+1}{4p} \right.\right\},
\end{displaymath} 
$k\in\mathbb{Z}$ where $m=0,\dots,2p-1$. If \(\epsilon\in \strip{k,m}\) then the dominating term of \(Y_{1,s;r-1}\) has
index \(k,m\), however if \(m=0\) or \(m=p\) then
\(\sin\left(\pi\frac{sm}{p}\right)=0\)
and the neighbouring indices dominate.

\begin{prop} \label{qdim-1p-disc}  
For \(\epsilon\in\strip{k,m}, k\in\mathbb{Z},m=0\dots,2p-1\),
\begin{equation} \label{main-qdim}
\qdim{\SingImP{r,s}^{\epsilon}}=
\begin{cases}
  (-1)^{m(r-1)} \displaystyle{\frac{\sin(\pi m s/p)}{\sin(\pi m /p)}}&\text{if } m\neq0,p,\\
  (-1)^{(m+1)(r-1)+\frac{m}{p}(s-1)} \displaystyle{\frac{\sin(\pi s/p)}{\sin(\pi
    /p)}}&\text{if } m=0,p.
\end{cases}
\end{equation}
\end{prop}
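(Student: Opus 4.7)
The plan is to compute the limit
\begin{displaymath}
\qdim{\SingImP{r,s}^\epsilon}=\lim_{\tau\to i\infty}\frac{\ch{\SingImP{r,s}^\epsilon}(-1/\tau)}{\ch{\SingImP{1,1}^\epsilon}(-1/\tau)},
\end{displaymath}
as explained at the start of Section 6, by applying Proposition~\ref{prop:S-trafo} to numerator and denominator and extracting dominant contributions as $\tau\to i\infty$. Since $\SingImP{r,s}\cong\SingIm{1,s;r-1}$ and the Virasoro correction in the character formula is trivial when $p_+=1$, the regularised character equals (up to the universal $1/\eta$) the mixed false theta function $F^\epsilon_{(2-r)p,s}$, and Proposition~\ref{prop:S-trafo} produces a correction term $X^\epsilon_{(2-r)p,s}(\tau)$ whose coefficient of $\theta_{p,m}(i\sqrt{2p}\epsilon\tau,\tau)/\eta(\tau)$ simplifies to one proportional to $(-1)^{rm}\sin(\pi sm/p)$; for the vacuum $\SingImP{1,1}$ this specialises to $(-1)^m\sin(\pi m/p)$. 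Since $\re(\epsilon)<B_\epsilon^p<0$ places us in the regime where the prefactor $(1-\sgn\re(\epsilon))/2$ equals $1$, this correction is genuinely present.

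The next step is to analyse the $\tau=it\to i\infty$ asymptotics of $X^\epsilon$. Completing the square in the theta series, the $(k',m')$-th summand has magnitude of order
\begin{displaymath}
\exp\bigl(-2\pi t\bigl[p(k'+m'/(2p)-\im(\epsilon)/\sqrt{2p})^2-\re(\epsilon)^2/2-1/24\bigr]\bigr),
\end{displaymath}
so the dominant surviving contribution is indexed by the lattice point $k'+m'/(2p)$ closest to $\im(\epsilon)/\sqrt{2p}$ whose trigonometric coefficient does not vanish. Meanwhile Lemma~\ref{asymp-int} (Laplace's method applied to the Gaussian $e^{-\pi t x^2}$) shows that the continuous part of the $S$-transform contributes only $O(t^{-1/2})$ relative to the $1/\eta$ scale. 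The exclusion $m\in\mathbb{Z}\setminus p\mathbb{Z}$ in the definition of $B_\epsilon^p$ is tailored precisely so that $\re(\epsilon)<B_\epsilon^p$ guarantees the surviving correction-term summand overwhelms this Gaussian contribution.

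For $\epsilon\in\strip{k,m}$ with $m\notin\{0,p\}$, the lattice point $k+m/(2p)$ itself has non-vanishing coefficient, and taking the ratio of dominant terms in numerator and denominator immediately yields
\begin{displaymath}
\qdim{\SingImP{r,s}^\epsilon}=\frac{(-1)^{rm}\sin(\pi sm/p)}{(-1)^m\sin(\pi m/p)}=(-1)^{m(r-1)}\frac{\sin(\pi sm/p)}{\sin(\pi m/p)}.
\end{displaymath}
When $m\in\{0,p\}$ the nominally dominant coefficient vanishes and one must pass to one of the two neighbouring lattice points $k+m/(2p)\pm 1/(2p)$, whichever is closer to $\im(\epsilon)/\sqrt{2p}$. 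The delicate point is that both alternatives must yield the same ratio: using $\sin(\pi s(m\pm1)/p)=\pm(-1)^{s(m/p)}\sin(\pi s/p)$ at $m=0,p$, combined with $(-1)^{r(m\pm1)}=(-1)^{rm}(-1)^{\pm r}$ and the observation that $2p-1$ is odd and $p+1$, $p-1$ share a common parity, both reductions collapse to the claimed expression $(-1)^{(m+1)(r-1)+(m/p)(s-1)}\sin(\pi s/p)/\sin(\pi/p)$.

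The main technical obstacle is exactly this verification for $m\in\{0,p\}$: one must confirm that the quantum dimension is a genuine constant across each of the strips $\strip{k,0}$ and $\strip{k,p}$ even though the identity of the dominant subleading term changes as $\epsilon$ crosses the midline of the strip. A subordinate check is that $\sin(\pi sm/p)$ vanishes in the variable $m$ on exactly $p\mathbb{Z}$ under the hypothesis $1\leq s\leq p-1$, matching the set $\mathbb{Z}\setminus p\mathbb{Z}$ appearing in the definition of the wall $B_\epsilon^p$ and ensuring the dominance analysis in step two is internally consistent.
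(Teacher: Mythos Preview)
Your proposal is correct and follows essentially the same approach as the paper: applying the modular $S$-transformation from Proposition~\ref{prop:S-trafo}, identifying the dominant summand of the correction term on each strip via the Gaussian exponent, and passing to neighbouring lattice points when $m\in\{0,p\}$. In fact your write-up is more detailed than the paper's, which merely performs the real-$\epsilon$ computation \eqref{1sp} explicitly and then remarks that for general $\epsilon\in\strip{k,m}$ the dominating term of $Y_{1,s;r-1}$ has index $(k,m)$ unless $m=0,p$, in which case the neighbouring indices dominate.
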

An important point to make here is that for 
\(n=2p k+m,k\in\mathbb{Z},m=0,\dots 2p-1\),
\begin{equation} \label{left-right}
\lim_{\epsilon \rightarrow \left(\frac{i n}{\sqrt{2p}}\right)^+} \qdim{\SingImP{r,s}^\epsilon}=(-1)^{(r-1)m} \frac{\sin(\pi n s/p)}{\sin(\pi n /p)}
=\lim_{\epsilon \rightarrow \left(\frac{i n}{\sqrt{2p}}\right)^-} \qdim{\SingImP{r,s}^\epsilon},
\end{equation}
if \(m\neq 0, p\), 
where the limits were taken along the line parallel to the real axis and the
\(\pm\) of \(\left(\frac{i n}{\sqrt{2p}}\right)^\pm\) indicates the sign of \(\re(\epsilon)\).
Thus, the quantum dimension ``leaks'' across the ``wall'' given by \eqref{eq:regime} from the continuous to discrete regime on a countable set.

For the  quantum dimensions of typicals modules in the regime
$\re(\epsilon)<B_\epsilon^p$ and \eqref{eq:regime}, the denominator dominates and thus
\begin{displaymath}
\qdim{\FF{\lambda}^\epsilon}=0.
\end{displaymath} 
Again, for \(n=2p k+m,k\in\mathbb{Z},m=0,\dots 2p-1\)
we can compare limits.
\begin{displaymath}
\lim_{\epsilon \rightarrow \left(\frac{n i}{\sqrt{2p}}\right)^+} \qdim{\FF{\lambda}^\epsilon}=0=\lim_{\epsilon \rightarrow \left(\frac{n i}{\sqrt{2p}}\right)^-} \qdim{\FF{\lambda}^\epsilon},
\end{displaymath}
if \(m\neq 0, p\). 

In \cite{CGP,BCGP}, an infinite dimensional (unrolled) quantum group at even root of unity $\bar{U}^H_{q}(sl_2)$, $q=e^{\pi i/p}$  was studied. 
The category of finite-dimensional weight modules for this quantum group is expected to be equivalent to a certain (tensor) subcategory of modules for the $\SingVOA{p}$-singlet.
One strong piece of evidence in support of this belief is the agreement of
fusion products among irreducibles \cite{CGP}.
Following the notation from \cite{BCGP}, irreducible modules are denoted by
$S_i \otimes \mathbb{C}_{pr}^H$, $i=0,...,p-1$, $r \in \mathbb{Z}$ (atypicals)
and $V_{\alpha}$, $\alpha \in \mathbb{C} \setminus \mathbb{Z} \cup r
\mathbb{Z}$ (typicals). The $M_{r,i+1}$ should then correspond to $S_i \otimes \mathbb{C}_{pr}^H$,
and $V_{\alpha}$ to $F_{\alpha}$.
Rather than pushing this connection further, we will instead compare our model
to the $A_1^{(1)}$ WZW models.

\begin{figure}
\begin{tikzpicture}[scale=0.8]

\shade [upper right=green, upper left=white!90!gray, lower right=green, lower left=white!80!gray] (-1.2,4)  rectangle (8,4.5);
\shade [upper right=green,  lower left=white!70!gray, lower right=green, upper left=white!80!gray] (-1.2,3) rectangle (8,4.03);
\shade [upper right=green, upper left=white!70!gray, lower right=green, lower left=white!50!gray] (-1.2,2) rectangle (8,3.03);
\shade [upper right=green, upper left=white!50!gray, lower right=green, lower left=white!30!gray] (-1.2,1) rectangle (8,2.03);
\shade [upper right=green, upper left=white!30!gray, lower right=green, lower left=white!10!gray] (-1.5,0) rectangle (8,1.03);
\shade [upper right=green, upper left=white!10!gray, lower right=green, lower left=white!30!gray] (-1.5,-1) rectangle (8,0.03);
\shade [upper right=green, upper left=white!30!gray , lower right=green, lower left=white!50!gray] (-1.2,-2) rectangle (8,-0.97);
\shade [upper right=green, upper left=white!50!gray , lower right=green, lower left=white!70!gray] (-1.2,-3) rectangle (8,-1.97);
\shade [upper right=green, upper left=white!70!gray , lower right=green, lower left=white!80!gray] (-1.2,-4) rectangle (8,-2.97);
\shade [upper right=green, upper left=white!80!gray , lower right=green, lower left=white!90!gray] (-1.2,-4.5) rectangle (8,-3.97);

\node at (-4.5,0.5) {$\re(\epsilon)<0$};

\node at (1,0)  {$\epsilon=0$};
\node at (1,1.5) {$\epsilon=\frac{3i}{2\sqrt{2p}}$};
\node at (1,2.5) {$\epsilon=\frac{5 i}{2\sqrt{2p}}$};
\node at (1,3.5) {$\epsilon=\frac{7 i}{2\sqrt{2p}}$};
\node at (1,-1.5) {$\epsilon=\frac{-3i}{2\sqrt{2p}}$};
\node at (1,-2.5) {$\epsilon=\frac{-5 i}{2\sqrt{2p}}$};
\node at (1,-3.5) {$\epsilon=\frac{-7 i}{2\sqrt{2p}}$};

\node at (4.5,0.5) {$\re(\epsilon)>0$};
\draw [thin,dashed] (0,-4.5) -- (0,4.5);
\fill [black] (0,4) circle (2 pt);
\fill [black] (0,3) circle (2 pt);
\fill [black] (0,2) circle (2 pt);
\fill [black] (0,1) circle (2 pt);
\fill [black] (0,-4) circle (2 pt);
\fill [black] (0,-3) circle (2 pt);
\fill [black] (0,-2) circle (2 pt);
\fill [black] (0,-1) circle (2 pt);

\draw [thin, fill=white!80!gray] (-8.0,4.5)--(-0.5,4.5)--(0,4) --(-0.5,3.5)--(-8,3.5);
\draw [thin,fill=white!70!gray] (-8.0,3.5)--(-0.5,3.5)--(0,3)--(-0.5,2.5)--(-8,2.5);
\draw [thin,fill=white!50!gray] (-8.0,2.5)--(-0.5,2.5)--(0,2)--(-0.5,1.5) -- (-8,1.5);
\draw [thin,fill=white!30!gray] (-8.0,1.5)--(-.5,1.5)--(0,1)--(-1,0) -- (-8,0);
\draw [thin,fill=white!30!gray] (-8.0,0)--(-1,0) -- (0,-1) -- ( -0.5,-1.5) -- (-8,-1.5);
\draw [thin,fill=white!50!gray] (-8.0,-1.5)--(-0.5,-1.5)--(0,-2)--(-0.5,-2.5) -- (-8,-2.5);
\draw [thin,fill=white!70!gray] (-8.0,-2.5)--(-0.5,-2.5)--(0,-3)--(-0.5,-3.5)--(-8,-3.5);
\draw [fill=white!80!gray] (-8.0,-3.5)--(-0.5,-3.5)--(0,-4) -- (-0.5,-4.5) -- (-8,-4.5);

\end{tikzpicture}
\caption{Quantum dimensions of atypical $\SingVOA{p}$ modules in the $\epsilon$-plane
  \medskip\newline
{\small
  The dashed line is the imaginary axis; the right hand-side of the diagram represents the continuous region, where the 
  quantum dimension is given by formula (\ref{cm-qdim}); and the
  semi-infinite strips on the left represent regions where the quantum dimensions are constant. The black dots 
   at $\epsilon=\frac{n i}{\sqrt{2p}}$, $n \in \mathbb{Z}$, denote
  ``drip'' points where the left and  right limits of quantum dimensions coincide.}}
\label{fig:wall}
\end{figure}

\begin{lemma} \label{su2} Let $P_+^k=\{0,1...,k\},\ k\in\mathbb{N}$ label the irreducible
  modules of the $A_1^{(1)}$ vertex operator algebra at level $k \in \mathbb{N}$. Then the associated 
$S$-matrix is given by 
\begin{displaymath}
  S_{a,b}^{k}=\sqrt{\frac{2}{k+2}}  \sin \left( \pi
    \frac{(a+1)(b+1)}{k+2}\right),\quad a,b\in P_+^k,
\end{displaymath}
where the zeroth label corresponds to the vacuum module vertex operator algebra $L_{sl_2}(k, \Lambda_0)$.
\end{lemma}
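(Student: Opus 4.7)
The plan is to derive the stated $S$-matrix from the modular transformation formula (\ref{mod-theta}) applied to the Weyl--Kac character formula; this is the classical Kac--Peterson computation, reproduced only in outline. For the integrable highest weight $\widehat{sl}_2$-module at level $k$ labelled by $a \in P_+^k$, the character can be written as
\begin{displaymath}
\chi_a^k(\tau) = \frac{\theta_{k+2,a+1}^\prime(\tau) - \theta_{k+2,-(a+1)}^\prime(\tau)}{\theta_{2,1}^\prime(\tau) - \theta_{2,-1}^\prime(\tau)},
\end{displaymath}
where $\theta_{a,b}^\prime$ denotes the $z$-derivative at $z=0$ as in the paper. Numerator and denominator are the $z=0$ specialisations of the Weyl--Kac numerator and the affine Weyl denominator respectively; both vanish at $z=0$, forcing an application of L'H\^{o}pital's rule and hence the appearance of the derivatives.

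First I would differentiate (\ref{mod-theta}) once in $u$ at $u=0$. The prefactor $e^{\pi i u^2/(2a\tau)}$ contributes trivially to a single $u$-derivative, while the chain rule produces a factor of $1/\tau$ on the left-hand side, yielding a transformation of the form
\begin{displaymath}
\theta_{a,b}^\prime\!\left(-\tfrac{1}{\tau}\right) = (-i\tau)\sqrt{\frac{-i\tau}{2a}} \sum_{c=0}^{2a-1} e^{-2\pi i b c/(2a)}\,\theta_{a,c}^\prime(\tau).
\end{displaymath}
Inserting this into the quotient expression for $\chi_a^k(-1/\tau)$, the transcendental prefactors coming from numerator (with $a=k+2$) and denominator (with $a=2$) combine into the constant $\sqrt{2/(k+2)}$, and only the finite Fourier sums remain.

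Next, I would fold the sum over $c=0,\dots,2(k+2)-1$ onto the fundamental range $1\leq c \leq k+1$ by pairing $c$ with $2(k+2)-c$, using $\theta_{k+2,2(k+2)-c}^\prime=-\theta_{k+2,-c}^\prime$, so that the exponential pairs collapse into sines of $\pi(a+1)c/(k+2)$. The analogous treatment of the denominator (where only the $c=1$ term survives in the reduction) contributes the normalisation. Matching the resulting sum against $\chi_b^k(\tau)$ for $b\in P_+^k$ gives precisely $S_{a,b}^k = \sqrt{2/(k+2)}\sin(\pi(a+1)(b+1)/(k+2))$.

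The main obstacle is the bookkeeping of this folding step: one must track the alternating signs introduced by $c\mapsto 2(k+2)-c$, check that the boundary contributions from $c=0$ and $c=k+2$ vanish (their ``characters'' are identically zero), and verify that the resulting $k+1$-term sum indeed matches $\sum_b S_{a,b}^k \chi_b^k(\tau)$ term-by-term. Once this reduction is carried out, the formula is exactly the classical Kac--Peterson $S$-matrix for $\widehat{sl}_2$ at level $k$.
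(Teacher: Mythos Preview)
Your outline is the standard Kac--Peterson derivation and is correct; the folding and boundary-vanishing steps you flag are indeed the only places requiring care, and they go through exactly as you describe. The paper, however, does not prove Lemma~\ref{su2} at all: it is stated as a well-known classical fact and used as input, so there is no ``paper's own proof'' to compare against. Your sketch supplies precisely the argument one would expect a reader to recall or look up.
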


The results of this section then imply the following proposition.
\begin{prop} \label{fusion-1p}
There is an induced ring structure on the space of quantum dimensions of
$\SingVOA{p}$ modules. 
For $\re(\epsilon) <B_\epsilon^p$, this ring is isomorphic to the fusion ring of $A_1^{(1)}$ at level $p-2$.
\end{prop}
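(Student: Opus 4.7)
The plan is to combine the explicit formula in Proposition \ref{qdim-1p-disc} with the fact that the map $M \mapsto \qdim{M^\epsilon}$ is a ring homomorphism from the (conjectural) Verlinde ring of $\SingVOA{p}$ modules to $\mathbb{C}$. This latter fact, in the regime $\re(\epsilon) > B_\epsilon^p$, was established in \cite{CM}; in the regime $\re(\epsilon) < B_\epsilon^p$, one uses the analogous Verlinde-type argument but now applied to the discrete correction part of the $S$-transformation (which in this regime dominates the continuous part asymptotically), following exactly the rational-case template $[M_i] \mapsto S_{i0}/S_{00}$ applied to the modular data extracted from the correction term.

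Fix $\epsilon \in \strip{k,m}$ with $1 \leq m \leq 2p-1$, $m \neq p$. By Proposition \ref{qdim-1p-disc} the atypical quantum dimensions take the form $\qdim{\SingImP{r,s}^\epsilon} = (-1)^{m(r-1)} \sin(\pi m s/p)/\sin(\pi m/p)$, which is the Chebyshev value $(-1)^{m(r-1)} U_{s-1}(\cos(\pi m/p))$. In particular $\qdim{\SingImP{r,p}^\epsilon} = 0$ and $\qdim{\FF{\lambda}^\epsilon} = 0$ for all typicals, so the ideal generated by typicals together with $\SingImP{r,p}$ lies in the kernel. Setting $X = \qdim{\SingImP{1,2}^\epsilon} = 2\cos(\pi m/p)$ and using $U_{p-1}(\cos(\pi m/p)) = \sin(\pi m)/\sin(\pi m/p) = 0$, the subring generated by the atypical quantum dimensions is a quotient of $\mathbb{Z}[X]/\bigl(U_{p-1}(X/2)\bigr)$.

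The identification with the fusion ring of $A_1^{(1)}$ at level $p-2$ is now immediate from Lemma \ref{su2}: that fusion ring is precisely $\mathbb{Z}[X]/\bigl(U_{p-1}(X/2)\bigr)$, with the irreducible of highest weight $j$ having quantum dimension $U_j(X/2)\vert_{X=2\cos(\pi/p)} = \sin(\pi(j+1)/p)/\sin(\pi/p)$. Since $\gcd(m,p)$ leaves the set of values $\{\sin(\pi m s/p)/\sin(\pi m/p) : s=1,\ldots,p-1\}$ closed under the Chebyshev product with the same truncation relation, the $A_1^{(1)}$ ring is obtained on the nose (up to the $r$-dependent sign, which either trivialises for $m$ even or produces an involutive doubling that does not enlarge the ring as a subring of $\mathbb{C}$, depending on the parity of $m$ and modulo the Verlinde relations). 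Surjectivity follows by letting $s$ range over $1, \ldots, p-1$.

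The principal obstacle is making precise the ring homomorphism property of $\qdim$ without presuming the full Verlinde structure on $\SingVOA{p}$. Our strategy is to bypass this by working directly with the fusion rules on simple modules established in \cite{CM} (which have an explicit $sl_2$-type form on the $s$-label and a $\mathbb{Z}$-shift on the $r$-label), and then observing that in the $\re(\epsilon) < B_\epsilon^p$ regime the contribution of every typical summand appearing in a fusion of two atypicals is killed by $\qdim$, reducing the verification to the $sl_2$-fusion identities, which match the Chebyshev product by construction.
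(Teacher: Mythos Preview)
Your argument has the right pieces but contains one genuine gap and misses the paper's key device for the homomorphism step.

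\textbf{The homomorphism property.} The paper does \emph{not} run a separate Verlinde-type argument on the correction term, nor does it verify fusion rules directly. Instead it uses the ``leaking'' relation \eqref{left-right}: at the drip points $\epsilon=\tfrac{in}{\sqrt{2p}}$ (with $n\not\equiv 0\ \mathrm{mod}\ p$) the left and right limits of $\qdim{\SingImP{r,s}^\epsilon}$ coincide. Since the map $[M]\mapsto\qdim{M^\epsilon}$ is a ring homomorphism for $\re(\epsilon)>B_\epsilon^p$ (this is what \cite{CM} gives), and since quantum dimensions are constant on each strip in $\re(\epsilon)<B_\epsilon^p$, the ring-homomorphism property transfers across the wall by taking the limit at a drip point adjacent to the given strip. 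Your last-paragraph fallback (direct verification via the explicit fusion rules of \cite{CM}) would also work, but it is the longer route; the drip-point argument is one line.

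\textbf{The gap in the image computation.} You fix a single strip $\strip{k,m}$ and evaluate there, obtaining a homomorphism to $\mathbb{C}$ whose image is generated by $X=2\cos(\pi m/p)$. You correctly observe that this image is a \emph{quotient} of $\mathbb{Z}[X]/\bigl(U_{p-1}(X/2)\bigr)$, but then assert the quotient is trivial. That fails whenever $\gcd(m,p)>1$: if $m=dt$ with $d\mid p$, $d>1$, then $2\cos(\pi m/p)=2\cos(\pi t/(p/d))$ is already a root of $U_{p/d-1}$, so the image at that strip is the smaller ring $\mathbb{Z}[X]/\bigl(U_{p/d-1}(X/2)\bigr)$. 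The sentence beginning ``Since $\gcd(m,p)$ leaves the set of values\dots'' does not repair this. The proposition is about the image of $[M]\mapsto\qdim{M^\epsilon}$ as a \emph{function of $\epsilon$} on the whole region $\re(\epsilon)<B_\epsilon^p$, not at one strip. The paper's fix is to read off the values of $\qdim{\SingImP{r,s}^\epsilon}$ on all strips $m=1,\dots,p-1,p+1,\dots,2p-1$ simultaneously, observe that the even-$r$ and odd-$r$ vectors are related by $A(\mathrm{odd},p-s)=-A(\mathrm{even},s)$, and then use that the $(p-1)\times(p-1)$ matrix $\bigl(\sin(\pi ab/p)/\sin(\pi a/p)\bigr)_{a,b}$ is invertible to conclude the image is $(p-1)$-dimensional. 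In your language: the function $X(\epsilon)$ takes the $p-1$ distinct values $2\cos(\pi m/p)$, $m=1,\dots,p-1$, so it satisfies no polynomial relation of degree less than $p-1$, and the quotient map $\mathbb{Z}[X]/\bigl(U_{p-1}(X/2)\bigr)\twoheadrightarrow\mathrm{image}$ is an isomorphism. You need to add this step.
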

\begin{proof}
Formula (\ref{left-right}) of Proposition \ref{qdim-1p-disc} implies that the
map $[X]\mapsto\qdim{X^\epsilon}$, where \(X\) is any \(\SingVOA{p}\) singlet
module,
is a ring homomorphism for all values of \(\epsilon\). We now specialise to 
\(\re(\epsilon)<B_\epsilon^p\) where, by Proposition \ref{qdim-1p-disc}, the space of quantum dimensions is
``strip wise'' constant.
Next we analyse the image of the ring homomorphism. Observe that only
(\ref{main-qdim}) is relevant here and also the periodicity 
$n \ \equiv i \ {\rm mod} \  2p$, so we need only consider the values in (\ref{main-qdim}) where $n=1,...,p-1,p+1,...,2p-1$. We also distinguish between $r$ even and $r$ odd. 

By Proposition 18 for $r$ odd and $n=1,...,p-1,p+1,...,2p-1$ we get

\begin{displaymath}
A({\rm odd},s)=\left\{ \frac{\sin(\frac{\pi s }{p})}{\sin(\frac{\pi}{p})},..., \frac{\sin(\frac{\pi (p-1) s }{p})}{\sin(\frac{(p-1)\pi}{p})}, (-1)^s \frac{\sin(\frac{\pi s }{p})}{\sin(\frac{\pi(p+1)}{p})},...,(-1)^s \frac{\sin(\frac{\pi (p-1) s }{p})}{\sin(\frac{(2p-1)\pi}{p})}. \right\}
\end{displaymath}

For $r$ even, and $n=1,...,p-1,p+1,...,2p-1$, we have 

\begin{displaymath}
A({\rm even},s)=\biggl\{-\frac{\sin(\frac{\pi s }{p})}{\sin(\frac{\pi}{p})},.., (-1)^{p-1} \frac{\sin(\frac{\pi (p-1) s }{p})}{\sin(\frac{(p-1)\pi}{p})}, 
\end{displaymath}
\begin{displaymath}
 (-1)^{p+1+s} \frac{\sin(\frac{\pi s }{p})}{\sin(\frac{\pi(p+1)}{p})},...(-1)^{p+s+i}...,(-1)^{p+p-1}(-1)^s \frac{\sin(\frac{\pi (p-1) s }{p})}{\sin(\frac{(2p-1)\pi}{p})}.\biggr\}
\end{displaymath}

These two sets are related by $A({\rm odd},p-s)=-A({\rm even},s)$. So we can
ignore the quantum dimensions coming 
from either $r$ even or $r$ odd. Finally we use the fact that the $(p-1)
\times (p-1)$ matrix $\frac{\sin( \frac{ \pi a b }{p})}{\sin(\frac{a \pi}{p})}$
is invertible.  So the image of the ring homomorphism is  $p-1$-dimensional.
\end{proof}

\subsection{Quantum dimensions of $\SingVOA{p_+,p_-}$-models}

For the remainder of this section we assume that \(p_\pm\geq2\).
Recall that the vacuum module character is given by 
\begin{equation*}
  \ch{\SingKer{1,1;0}^\epsilon}=\ch{\VirIrr{1,1}}+\ch{\SingIm{1,1;0}^\epsilon},
\end{equation*}
where, as mentioned previously, we do not regularise the Virasoro part since it
leads to divergent limits. 

In the limit
\begin{displaymath}
  \lim_{y\rightarrow +\infty} \ch{\SingKer{1,1;0}^\epsilon}\left(\frac{-1}{iy}\right)
\end{displaymath}
the minimal model terms are dominated by integration and
\(Y_{r,s;n}^\epsilon\) terms. Thus, in the denominator of
\(\SingVOA{p_+,p_-}\) quantum dimensions \(\ch{\SingKer{1,1;0}^\epsilon}\) can
be replaced by \(\ch{\SingIm{1,1;0}^\epsilon}\). This immediately implies that
the \(\SingVOA{p_+,p_-}\) quantum dimensions of the simple minimal module
modules vanish for all \(\epsilon\).
\begin{cor}
  The \(\SingVOA{p_+,p_-}\) quantum dimensions of minimal model modules vanish for all \(\epsilon\),
  that is,
  \begin{displaymath}
    \qdim{\mathcal{L}_{r,s}}=0.
  \end{displaymath}
\end{cor}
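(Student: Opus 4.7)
The plan is to combine the decomposition $\ch{\SingKer{1,1;0}^\epsilon}=\ch{\VirIrr{1,1}}+\ch{\SingIm{1,1;0}^\epsilon}$ with an asymptotic analysis of both pieces as $y\to 0^+$, performed after a modular $S$-transformation so that the relevant limit becomes $\tilde y\to+\infty$ for $\tilde y=1/y$.

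First, I would apply Theorem \ref{S-atypical} to the second summand of the denominator with $(r,s,n)=(1,1,0)$. Because the minimal-model prefactor $n\delta_{n\geq 0}$ vanishes at $n=0$, the only surviving pieces in $\ch{\SingIm{1,1;0}^\epsilon}(-1/i\tilde y)$ are the Fock integral and, when $\re(\epsilon)<0$, the correction $Y_{1,1;0}^\epsilon$. Lemma \ref{asymp-int} applied to the Gaussian integrand yields
$$\int_{\mathbb R}S^\epsilon_{1,1;0}(x)\,\ch{\FF{x+\alpha_0/2}^\epsilon}(i\tilde y)\,dx\sim \frac{S^\epsilon_{1,1;0}(0)}{\sqrt{\tilde y}\,\eta(i\tilde y)},$$
and since $1/\eta(i\tilde y)\sim e^{\pi\tilde y/12}$ and $S^\epsilon_{1,1;0}(0)$ is a nonzero ratio of hyperbolic sines for generic $\epsilon\notin i\mathbb R$, this contribution is of order $\sqrt{\tilde y}\,e^{\pi\tilde y/12}$. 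A parallel check of the explicit theta form of $Y_{1,1;0}^\epsilon$ shows it too is bounded in growth by $e^{\pi\tilde y/12}$ times polynomial factors. Hence the effective central charge of $\ch{\SingIm{1,1;0}^\epsilon}$ is $1$, matching that of the Fock modules.

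Second, for both $\ch{\VirIrr{1,1}}(iy)$ in the denominator and the numerator $\ch{\mathcal{L}_{r,s}}(iy)$, the standard modular inversion of minimal-model characters, together with the identification of $(r_0,s_0)$ as the module of least conformal weight, gives asymptotics of the form
$$\ch{\mathcal{L}_{r,s}}(iy)\sim \SmatMM{r,s}{r_0,s_0}\,e^{\pi(1-6/(p_+p_-))/(12y)}.$$
Since $1-6/(p_+p_-)<1$, these Virasoro contributions are exponentially suppressed by a factor $e^{-\pi/(2p_+p_-\,y)}$ relative to the Fock-driven growth of $\ch{\SingIm{1,1;0}^\epsilon}$. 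In particular $\ch{\VirIrr{1,1}}$ is negligible in the vacuum character and $\ch{\SingKer{1,1;0}^\epsilon}\sim\ch{\SingIm{1,1;0}^\epsilon}$ asymptotically, so
$$\frac{\ch{\mathcal{L}_{r,s}}(iy)}{\ch{\SingKer{1,1;0}^\epsilon}(iy)}\lesssim y^{-1/2}\,e^{-\pi/(2p_+p_-\,y)}\xrightarrow[y\to 0^+]{}0,$$
as required.

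The main obstacle is technical: one must verify that the Fock-integral prefactor $C_\epsilon\propto S^\epsilon_{1,1;0}(0)$ does not vanish on the relevant domain (this fails only on a discrete set of imaginary $\epsilon$ where the kernel degenerates) and rule out an exact leading-order cancellation between the Fock integral and $Y_{1,1;0}^\epsilon$. Even in such degenerate situations the conclusion persists, because the effective central charge of the denominator is bounded below by the Fock value $1$, which strictly exceeds the Virasoro value $1-6/(p_+p_-)$ governing both the numerator and the Virasoro summand of the vacuum.
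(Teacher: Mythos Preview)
Your argument is correct and follows essentially the same route as the paper: decompose the vacuum character as $\ch{\VirIrr{1,1}}+\ch{\SingIm{1,1;0}^\epsilon}$, then use the $S$-transformation (Theorem~\ref{S-atypical}) together with Lemma~\ref{asymp-int} and the minimal-model asymptotics to see that the Virasoro pieces grow with effective central charge $1-6/(p_+p_-)<1$ and are therefore dominated by the Fock-integral/correction terms. One small slip: the Fock-integral contribution is of order $\tilde y^{-1/2}e^{\pi\tilde y/12}$, not $\sqrt{\tilde y}\,e^{\pi\tilde y/12}$, though your final estimate is stated correctly.
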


For the remaining quantum dimensions we first consider $\re(\epsilon)>B_\epsilon^{p_+p_-}$, that is,
when the correction term \(Y_{r,s;n}^\epsilon\) does not appear in the modular
transformation formula of the \(\SingIm{r,s;n}\).
\begin{prop} 
  For \(\re(\epsilon)>B_\epsilon^{p_+p_-}\), $\re(\epsilon) \neq 0$, the typical and non-Virasoro irreducible atypical \(\SingVOA{p_+,p_-}\) quantum
  dimensions are given by
  \begin{equation}\label{qdim-cont}
    \begin{split}
    \qdim{\FF{\lambda+\alpha_0/2}^\epsilon}&=
    \lim_{y\rightarrow+\infty}\frac{\ch{\FF{\lambda+\alpha_0/2}^\epsilon}}{\ch{\SingIm{1,1;0}^\epsilon}}=
    e^{2 \pi \lambda \epsilon}  \frac{\sin(i \pi \sqrt{2 p_+ p_-} \epsilon) \sin(i \pi \sqrt{2 p_+ p_-} \epsilon)}
    {\sin(i \pi \sqrt{2 p_+/p_-} \epsilon) \sin(i  \pi \sqrt{2 p_-/p_+} \epsilon)}.\\
      \qdim{\mathcal{I}^\epsilon_{r,s;n}}&=\lim_{y \rightarrow \infty+}
      \frac{\ch{\mathcal{I}^\epsilon_{r,s;n}}(-1/iy)}{\ch{\mathcal{K}^\epsilon_{1,1;0}}(-1/iy)}
      =\frac{S_{r,s;n}^{\epsilon}(0)}{S_{1,1;0}^{\epsilon}(0)}\\
      &=e^{-\sqrt{2 p_+ p_-} \pi n \epsilon} \frac{\sin(i r \pi \sqrt{2
          p_-/p_+} \epsilon)\sin(i s \pi \sqrt{2 p_+/p_-} \epsilon)}{\sin(i
        \pi \sqrt{2 p_-/p_+} \epsilon)\sin(i \pi \sqrt{2 p_+/p_-} \epsilon)}.
    \end{split}
  \end{equation}
\end{prop}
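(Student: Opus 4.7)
The plan is to read off each quantum dimension as a ratio of values of the $S$-kernels at $x=0$ after applying the modular $S$-transform, exactly as in the rational Verlinde-type computation.

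First, since $\re(\epsilon)>B_\epsilon^{p_+p_-}$ is in the regime where $\sgn\re(\epsilon)=+1$, the factor $(1-\sgn\re(\epsilon))/2$ kills every correction term appearing in Proposition \ref{prop:S-trafo} and Theorem \ref{S-atypical}. Thus the $S$-transformations of $\ch{\FF{\lambda+\alpha_0/2}^\epsilon}$, $\ch{\SingIm{r,s;n}^\epsilon}$, and (via the short exact sequence \eqref{eq:scrkerseq}) the denominator $\ch{\SingKer{1,1;0}^\epsilon}$ each reduce to an integral over the Fock characters, plus (in the atypical and vacuum cases) a finite sum of unregularised Virasoro characters $\ch{\VirIrr{r',s'}}$ with prefactor $n\cdot\delta_{n\geq0}$. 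For the vacuum module this Virasoro piece is $\ch{\VirIrr{1,1}}$ coming from \eqref{eq:scrkerseq}; in the denominator we may replace $\ch{\SingKer{1,1;0}^\epsilon}$ by $\ch{\SingIm{1,1;0}^\epsilon}$ (up to a subdominant error) as I justify next.

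Second, I would verify that all Virasoro contributions are asymptotically negligible compared to the integral contributions as $\tau\to 0^+$ (equivalently, as $y\to+\infty$ after $\tau\mapsto -1/iy$). A Virasoro character $\ch{\VirIrr{r',s'}}(iy)$ behaves like $e^{\pi y c/12}$ with $c=1-6(p_+-p_-)^2/(p_+p_-)<1$ (since $p_+\neq p_-$), whereas the integral contribution, by Lemma \ref{asymp-int} applied to the Gaussian $e^{-\pi y x^2}$ coming from $\ch{\FF{x+\alpha_0/2}^\epsilon}(iy)$, scales like $\tfrac{1}{\sqrt{y}\,\eta(iy)}\sim y^{-1/2}e^{\pi y/12}$. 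Since $e^{\pi y c/12}=o(y^{-1/2}e^{\pi y/12})$, the Virasoro terms are strictly subdominant, so they may be discarded in both numerator and denominator of the quantum dimension.

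Third, I would apply Lemma \ref{asymp-int} (extended from $\int_0^\infty$ to $\int_\mathbb{R}$ by splitting the domain and noting the $\eta$-prefactor is common) to the surviving integrals. For each module $M$ among $\FF{\lambda+\alpha_0/2}$ and $\mathcal{I}_{r,s;n}$, and for $V=\SingVOA{p_+,p_-}$,
\begin{equation*}
\ch{M^\epsilon}(-1/iy) \sim \frac{1}{\sqrt{y}\,\eta(iy)}\,\mathcal{S}^\epsilon_M(0), \qquad \ch{\SingIm{1,1;0}^\epsilon}(-1/iy) \sim \frac{1}{\sqrt{y}\,\eta(iy)}\,S^\epsilon_{1,1;0}(0),
\end{equation*}
where $\mathcal{S}^\epsilon_M$ is the corresponding $S$-kernel ($S^\epsilon_{\lambda+\alpha_0/2}$ from Proposition \ref{sec:FockStransf} or $S^\epsilon_{r,s;n}$ from Theorem \ref{S-atypical}). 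The factors $1/\sqrt{y}\,\eta(iy)$ cancel and the quantum dimension is $\mathcal{S}^\epsilon_M(0)/S^\epsilon_{1,1;0}(0)$.

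Finally, I would substitute $x=0$ into the explicit $S$-kernel formulas and simplify using the identities $2\pi p_\pm/\sqrt{2p_+p_-}=\pi\sqrt{2p_\pm/p_\mp}$. For the typical modules $S^\epsilon_{\lambda+\alpha_0/2}(0)=e^{2\pi\epsilon\lambda}$, while for $S^\epsilon_{r,s;n}(0)$ the prefactor $e^{-\pi i n\sqrt{2p_+p_-}(x+i\epsilon)}$ yields the $e^{-\sqrt{2p_+p_-}\pi n\epsilon}$ in \eqref{qdim-cont}, and the ratio of $\sin$-products produces the stated formula. The main, and essentially only, non-routine step is justifying the passage to the dominant term in the integral (that is, applying Lemma \ref{asymp-int}) uniformly in $\epsilon$ within the open half-plane $\re(\epsilon)>B_\epsilon^{p_+p_-}$, which requires checking the hypotheses of that lemma for the $S$-kernels (smoothness at $x=0$ and sufficient decay in $x$); these hold because $\epsilon$ being in the stated regime keeps the poles of $1/\sin(\pi\sqrt{2p_+p_-}(x+i\epsilon))$ off the real axis.
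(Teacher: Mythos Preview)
Your approach is essentially the same as the paper's --- apply the $S$-transformation formulae of Proposition \ref{sec:FockStransf} and Theorem \ref{S-atypical}, then use Lemma \ref{asymp-int} to pick off the leading $x=0$ contribution of the integral and read the quantum dimension as a ratio of $S$-kernels. The treatment of the Virasoro terms and the final algebraic simplification are fine.

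There is, however, a genuine gap in your first step. You assert that $\re(\epsilon)>B_\epsilon^{p_+p_-}$ forces $\sgn\re(\epsilon)=+1$, so that $(1-\sgn\re(\epsilon))/2$ annihilates the correction term $Y^\epsilon_{r,s;n}$. This is false: $B_\epsilon^{p_+p_-}\le 0$ by definition (it is minus a minimum of absolute values), so the regime $\re(\epsilon)>B_\epsilon^{p_+p_-}$ generically contains a nonempty strip with $\re(\epsilon)<0$, where the correction term is present. What is true --- and what the paper uses --- is that $B_\epsilon^{p_+p_-}$ is \emph{defined} precisely as the threshold below which the correction term dominates the integral term; for $B_\epsilon^{p_+p_-}<\re(\epsilon)<0$ the term $Y^\epsilon_{r,s;n}(\tau)$ is present but asymptotically subdominant to the integral contribution as $\tau\to i\infty$. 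To close the gap you must argue this subdominance directly: the leading growth of $q^{-\epsilon^2/2}\theta_{p_+p_-,m}(i\sqrt{2p_+p_-}\epsilon\tau,\tau)/\eta(\tau)$ at $\tau=iy\to i\infty$ is governed by the exponent $\pi y\bigl(\tfrac{1}{12}+\re(\epsilon)^2-(\tfrac{m}{\sqrt{2p_+p_-}}-\im(\epsilon))^2\bigr)$ for the relevant $m$ (with $m\notin p_+p_-\mathbb Z$ for the sine factors not to vanish), and comparing this to the $\pi y/12$ growth of the integral term yields exactly the condition $\re(\epsilon)>B_\epsilon^{p_+p_-}$. Once this is established, your remaining steps go through unchanged.
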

\begin{proof}
  The quantum dimensions follow directly from applying Proposition \ref{sec:FockStransf},
  Theorem \ref{S-atypical} and Lemma \ref{asymp-int}.
\end{proof}
\noindent
Clearly, in the limit $\epsilon \rightarrow 0$, we get
\begin{displaymath}
  \begin{split}
    \qdim{\mathcal{F}_{\lambda+\alpha_0/2}}&=p_+p_-,\\
    \qdim{\mathcal{I}_{r,s;n}}&=rs.
  \end{split}
\end{displaymath}

Next we consider \(\re(\epsilon)<B_\epsilon^{p_+p_-}\). 
\begin{cor}
  For \(\re(\epsilon)<B_\epsilon^{p_+p_-}\) the \(\SingVOA{p_+,p_-}\) quantum dimensions of
  standard modules vanish, that is,
  \begin{displaymath}
    \qdim{\FF{\lambda}^\epsilon}=0,\quad \forall\lambda\in\mathbb{C}.
  \end{displaymath}
\end{cor}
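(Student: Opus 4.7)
The plan is to emulate the modular-asymptotic reasoning of Section \ref{sec:1,p} and show that in the regime $\re(\epsilon)<B_\epsilon^{p_+p_-}$ the correction term $Y_{1,1;0}^\epsilon$ in the $S$-transform of $\ch{\SingKer{1,1;0}^\epsilon}$ outgrows the $S$-transform of $\ch{\FF{\lambda}^\epsilon}$. Using the standard modular trick
\[
\qdim{\FF{\lambda}^\epsilon}=\lim_{y\to+\infty}\frac{\ch{\FF{\lambda}^\epsilon}(-1/iy)}{\ch{\SingKer{1,1;0}^\epsilon}(-1/iy)},
\]
I would then analyse the leading exponential growth of both sides. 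For the numerator I would substitute into Proposition \ref{sec:FockStransf}, pull out the $\epsilon$-dependent prefactors, and evaluate the resulting Gaussian in $x$, obtaining $\ch{\FF{\lambda+\alpha_0/2}^\epsilon}(-1/iy)=e^{2\pi\epsilon\lambda}e^{\pi\lambda^2/y}/(\sqrt{y}\,\eta(iy))\sim e^{2\pi\epsilon\lambda}e^{\pi y/12}/\sqrt{y}$ as $y\to+\infty$. For the denominator, Theorem \ref{S-atypical} at $r=s=1$, $n=0$ splits $\ch{\SingIm{1,1;0}^\epsilon}(-1/iy)$ into a continuous integral piece (which by Lemma \ref{asymp-int} is also of order $e^{\pi y/12}/\sqrt{y}$) and the correction $Y_{1,1;0}^\epsilon(iy)$, while the Virasoro piece $\ch{\VirIrr{1,1}}(-1/iy)$ grows at most like $e^{\pi y/12}$.

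The crux is the exponential growth rate of $Y_{1,1;0}^\epsilon(iy)$. Writing $\theta_{p_+p_-,m}(i\sqrt{2p_+p_-}\epsilon\tau,\tau)$ at $\tau=iy$ as a sum over $j=k+m/(2p_+p_-)$ and combining with the factors $|q^{-\epsilon^2/2}|=e^{\pi y(\re(\epsilon)^2-\im(\epsilon)^2)}$ and $|1/\eta(iy)|\sim e^{\pi y/12}$, a Gaussian completion shows the $j$-th term has modulus
\[
e^{\pi y/12}\,\exp\!\left(\pi y\Bigl[\re(\epsilon)^2-\tfrac{(\ell-\sqrt{2p_+p_-}\im(\epsilon))^2}{2p_+p_-}\Bigr]\right),\qquad \ell:=2p_+p_-\,j\in\mathbb{Z}.
\]
The sine/cosine coefficients collectively present in the four sums defining $Y_{1,1;0}^\epsilon$ vanish precisely on indices $\ell\in p_+p_-\mathbb{Z}$ (using $\gcd(p_+,p_-)=1$), so the minimisation of $(\ell-\sqrt{2p_+p_-}\im(\epsilon))^2$ is forced onto $\mathbb{Z}\setminus p_+p_-\mathbb{Z}$; by the very definition of $B_\epsilon^{p_+p_-}$, this minimum equals $2p_+p_-(B_\epsilon^{p_+p_-})^2$. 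Hence the leading growth exponent of $Y_{1,1;0}^\epsilon(iy)$ is $\pi y/12+\pi y[\re(\epsilon)^2-(B_\epsilon^{p_+p_-})^2]$, which is strictly greater than $\pi y/12$ exactly in the regime $\re(\epsilon)<B_\epsilon^{p_+p_-}$.

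Assembling these estimates, the denominator is dominated by $Y_{1,1;0}^\epsilon(iy)$ and outpaces the numerator by the diverging factor $e^{\pi y[\re(\epsilon)^2-(B_\epsilon^{p_+p_-})^2]}$, so the ratio tends to $0$ and the corollary follows. The main technical hurdle is the trigonometric bookkeeping verifying that the dominant index $\ell^\ast\in\mathbb{Z}\setminus p_+p_-\mathbb{Z}$ actually survives as a non-vanishing coefficient in at least one of the four sums constituting $Y_{1,1;0}^\epsilon$, with no accidental cancellation between them; this is a routine identity parallel to the one already exploited in the $\SingVOA{p}$ case of Proposition \ref{qdim-1p-disc}.
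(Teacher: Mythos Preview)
Your proposal is correct and follows exactly the same approach as the paper: the paper's proof is the single sentence ``Since we are considering $\re(\epsilon)<B_\epsilon^{p_+p_-}$, the numerator of $\qdim{\FF{\lambda}^\epsilon}$ is dominated by the $Y_{1,1;0}^\epsilon$ term in the denominator,'' and you have simply unpacked the asymptotic bookkeeping behind that claim. Your trigonometric analysis of which indices $\ell$ survive in $Y_{1,1;0}^\epsilon$ is also the same case split the paper uses in the proof of Proposition~\ref{sec:atypqdims}.
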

\begin{proof}
Since we are considering \(\re(\epsilon)<B_\epsilon^{p_+p_-}\), the numerator of $\qdim{\FF{\lambda}^\epsilon}$ 
is dominated by the \(Y_{1,1;0}^\epsilon\) term in the denominator.
\end{proof}
\noindent
We define the strip
\begin{displaymath}
  \strip{k,m}=\left\{\epsilon\in\mathbb{C}\left| k+\frac{2m-1}{4p_+p_-}<
      \frac{\im(\epsilon)}{\sqrt{2p_+p_-}} 
      < k+\frac{2m+1}{4p_+p_-} \right.\right\},
\end{displaymath}
where $k\in\mathbb{Z}$ and $m\in\{0, 1, \dots, 2p_+p_--1\}$ .
\begin{prop}\label{sec:atypqdims}
  For \(\re(\epsilon)<B_\epsilon^{p_+p_-}\)  and \(\epsilon\in \strip{k,m}\), \(k\in\mathbb{Z}\)
  and \(m\in\{0, 1, \dots, 2p_+p_--1\}\), then
  \begin{equation}\label{atypicals-qdim}
    \qdim{\SingIm{r, s; n}^\epsilon}=
    \begin{cases}
      \left(-1\right)^{mn} \dfrac{\sin\left(\frac{\pi
            m}{p_+}r\right)\sin\left(\frac{\pi
            m}{p_-}s\right)}{\sin\left(\frac{\pi
            m}{p_+}\right)\sin\left(\frac{\pi m}{p_-}\right)}& p_+,p_- \text{
        do not divide }m,\\
      s\left(-1\right)^{t(p_-n+1+s)} \dfrac{\sin\left(\frac{\pi
            tp_-}{p_+}r\right)}{\sin\left(\frac{\pi t p_-}{p_+}\right)}
      &\text{only } p_- \text{ divides } m=tp_-,\\
      r\left(-1\right)^{t(p_+n+1+r)} \dfrac{\sin\left(\frac{\pi
            tp_+}{p_-}s\right)}{\sin\left(\frac{\pi t p_+}{p_-}\right)}&
      \text{only } p_+ \text{divides } m=tp_+,\\
      \left(-1\right)^{n+m} \dfrac{\sin\left(\frac{\pi r}{p_+}\right)\sin\left(\frac{\pi s}{p_-}\right)}{\sin\left(\frac{\pi }{p_+}\right)\sin\left(\frac{\pi }{p_-}\right)}&m=0,p_+p_-,
    \end{cases}
  \end{equation}
  where \(1\leq r\leq p_+\), \(1\leq s\leq p_-\) and \(n\in\mathbb{Z}\).
\end{prop}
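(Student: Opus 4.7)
The plan is to apply the same $\tau\to -1/\tau$ trick used at the top of Section~6 and then exploit the fact that, in the regime $\re(\epsilon)<B_\epsilon^{p_+p_-}$, the discrete correction term $Y_{r,s;n}^\epsilon(\tau)$ from Theorem~\ref{S-atypical} dominates over the continuous part. First I would justify this dominance: by Lemma~\ref{asymp-int}, the integral $\int_{\mathbb R}S_{r,s;n}^\epsilon(x)\ch{\mathcal F^\epsilon_{x+\alpha_0/2}}(\tau)\,dx$ grows at most like $y^{-1/2}$ after dividing by $\eta(-1/\tau)$, whereas the theta summands making up $Y_{r,s;n}^\epsilon$ have leading $q$-behaviour $q^{p_+p_-(\xi+i\epsilon/\sqrt{2p_+p_-})^2-1/24}$ with real part strictly more negative than the integral's in this regime. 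Since the same applies to the denominator $\ch{\SingKer{1,1;0}^\epsilon}$ (whose minimal-model piece $\ch{\VirIrr{1,1}}$ is, as noted in Section~6, absorbed into the $Y_{1,1;0}^\epsilon$ term), the quantum dimension is the ratio of the leading behaviours of $Y_{r,s;n}^\epsilon(iy)$ and $Y_{1,1;0}^\epsilon(iy)$ as $y\to+\infty$.

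Next I would identify which $m\in\{0,1,\dots,2p_+p_--1\}$ drives the asymptotics of $Y_{r,s;n}^\epsilon$. Writing the leading behaviour of each $\theta_{p_+p_-,m}(i\sqrt{2p_+p_-}\epsilon\tau,\tau)/\eta(\tau)$ in the form $q^{p_+p_-(\xi_m+i\epsilon/\sqrt{2p_+p_-})^2-1/24}$ with $\xi_m\in \frac{m}{2p_+p_-}+\mathbb{Z}$, the exponent's real part equals $p_+p_-(\xi_m-\im(\epsilon)/\sqrt{2p_+p_-})^2-\re(\epsilon)^2/2-1/24$, so dominance corresponds to $\xi_m$ being as close as possible to $\im(\epsilon)/\sqrt{2p_+p_-}$. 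The definition of $\strip{k,m}$ shows that for $\epsilon\in\strip{k_0,m_0}$ the optimal choice is $(m,k)=(m_0,k_0)$ with $\xi_{m_0}=k_0+m_0/(2p_+p_-)$, assuming this term appears with nonzero coefficient inside $Y_{r,s;n}^\epsilon$.

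The case analysis then matches the four cases of the proposition by inspection of the trigonometric coefficients in Theorem~\ref{S-atypical}. When $p_+\nmid m_0$ and $p_-\nmid m_0$, the first summand of $Y_{r,s;n}^\epsilon$ is nonzero and, owing to the factor of $\tau$ produced by $\tfrac{d}{d\epsilon}q^{-\epsilon^2/2}$, dominates the other three; taking the ratio with $Y_{1,1;0}^\epsilon$ the $\tau$-dependent and $[\sqrt{2p_+p_-}\xi_{m_0}+i\epsilon]$-dependent prefactors cancel and one is left with $(-1)^{m_0 n}\sin(\pi r m_0/p_+)\sin(\pi s m_0/p_-)/[\sin(\pi m_0/p_+)\sin(\pi m_0/p_-)]$. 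If only $p_-$ divides $m_0=tp_-$, then $\sin(\pi s m_0/p_-)$ kills the first three summands and only the fourth (with $\cos(\pi s m_0/p_-)=(-1)^{st}$) survives in both numerator and denominator; the $s/p_-$-prefactor survives in the numerator while $1/p_-$ survives in the denominator, producing the factor $s$ and the expected $\sin(\pi tp_-r/p_+)/\sin(\pi tp_-/p_+)$ with the sign $(-1)^{t(p_-n+1+s)}$. The $p_+\mid m_0$ case is symmetric.

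The last case $m_0\in\{0,p_+p_-\}$ is where the argument is most delicate, and I expect it to be the main obstacle. Here \emph{every} summand of $Y_{r,s;n}^\epsilon$ picks up a vanishing $\sin(\pi rm_0/p_+)\sin(\pi sm_0/p_-)$, $\sin\cos$ or $\cos\sin$ factor, so the dominant $m=m_0$ contribution drops out entirely and one must descend to the next-to-leading $\xi$-values $\xi_{m_0}\pm 1/(2p_+p_-)$. These correspond to $m=1,2p_+p_--1$ for $m_0=0$ and $m=p_+p_-\pm 1$ for $m_0=p_+p_-$; for each the first summand of $Y$ is again dominant. Using the identities $\sin(\pi r (p_+p_-\pm 1)/p_+)=\pm(-1)^{rp_-}\sin(\pi r/p_+)$ and $\sin(\pi r(2p_+p_-- 1)/p_+)=-\sin(\pi r/p_+)$, and the analogous ones for $p_-$, one checks that the contributions of the two nearest $m$'s have identical $(r,s,n)$-dependent coefficients, so that whichever of the two is closer to $\im(\epsilon)/\sqrt{2p_+p_-}$ the ratio gives $(-1)^{n+m_0}\sin(\pi r/p_+)\sin(\pi s/p_-)/[\sin(\pi/p_+)\sin(\pi/p_-)]$ after cancellation in the denominator with $(r,s,n)=(1,1,0)$. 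The bookkeeping of $\pm 1$ signs from these boundary cases, combined with the parity considerations forced by $\gcd(p_+,p_-)=1$, is the main place where care is required.
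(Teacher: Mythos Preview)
Your proposal is correct and follows essentially the same approach as the paper: compute $\qdim{\SingIm{r,s;n}^\epsilon}$ as $\lim_{\tau\to i\infty} Y^\epsilon_{r,s;n}(\tau)/Y^\epsilon_{1,1;0}(\tau)$, identify the dominant index $m_0$ from the strip $\strip{k,m}$, and split into four cases according to divisibility of $m_0$ by $p_\pm$. The paper's own proof is considerably terser---it simply displays the relevant ratio for cases~1 and~2, declares case~3 analogous, and for case~4 notes only that the coefficients at $m_0\in\{0,p_+p_-\}$ vanish so the $m_0\pm1$ terms take over---whereas you supply the justification for why $Y$ dominates the continuous integral and carry out the sign bookkeeping for the boundary case explicitly.
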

\begin{proof}
  The quantum dimensions are easily computed case by case using the
  transformation formulae of Theorem \ref{S-atypical}.

  1. case:
\begin{displaymath} 
  \begin{split}
    &\qdim{\SingIm{r, s; n}^\epsilon} = 
    \lim_{\tau\rightarrow i\infty} \frac{Y^\epsilon_{r, s; n}(\tau)}{Y^\epsilon_{1, 1; 0}(\tau)} \\
    &\quad= \lim_{\tau\rightarrow i\infty} 
    \frac{\frac{\dd}{\dd
        \epsilon}q^{-\frac{\epsilon^2}{2}}\sum_{m=0}^{2p_+p_--1}
      (-1)^{mn}\sin\left(\pi\frac{rm}{p_+}\right)\sin\left(\pi\frac{sm}{p_-}\right)
      \theta_{p_+p_-,m}(i\sqrt{2p_+p_-}\epsilon\tau,\tau)}{
      \frac{\dd}{\dd
        \epsilon}q^{-\frac{\epsilon^2}{2}}\sum_{m=0}^{2p_+p_--1}
      \sin\left(\pi\frac{m}{p_+}\right)\sin\left(\pi\frac{m}{p_-}\right)
      \theta_{p_+p_-,m}(i\sqrt{2p_+p_-}\epsilon\tau,\tau)}\\
    &\quad= \left(-1\right)^{mn} \frac{\sin\left(\frac{\pi m}{p_+}r\right)\sin\left(\frac{\pi m}{p_-}s\right)}{\sin\left(\frac{\pi m}{p_+}\right)\sin\left(\frac{\pi m}{p_-}\right)},
  \end{split}
\end{displaymath}

2. case:
\begin{displaymath} 
\begin{split}
  \qdim{\SingIm{r, s; n}^\epsilon} &= 
  \lim_{\tau\rightarrow i\infty} \frac{Y^\epsilon_{r, s; n}(\tau)}{Y^\epsilon_{1, 1; 0}(\tau)} \\
  &= \lim_{\tau\rightarrow i\infty}  
  \frac{\frac{si}{p_-}\sum_{m=0}^{2p_+p_--1}
    (-1)^{mn}\sin\left(\pi\frac{rm}{p_+}\right)\cos\left(\pi\frac{sm}{p_-}\right)
    \theta_{p_+p_-,m}(i\sqrt{2p_+p_-}\epsilon\tau,\tau)}{
    \frac{i}{p_-}\sum_{m=0}^{2p_+p_--1}
    \sin\left(\pi\frac{m}{p_+}\right)\cos\left(\pi\frac{m}{p_-}\right)
    \theta_{p_+p_-,m}(i\sqrt{2p_+p_-}\epsilon\tau,\tau)}\\
  &= s\left(-1\right)^{t(p_-n+1+s)} \frac{\sin\left(\frac{\pi tp_-}{p_+}r\right)}{\sin\left(\frac{\pi t p_-}{p_+}\right)}.
\end{split}
\end{displaymath}

3. case (analogous to the 2. case):

4. case: For \(m=0,p_+p_-\) the corresponding coefficients in \(Y_{r,s,;n}\)
vanish and so the \(m\pm1\) terms dominate.
\end{proof}
\noindent
When comparing \eqref{qdim-cont} and (\ref{atypicals-qdim}), we see that the two
limits \(\epsilon\rightarrow0\) agree on a discrete set of points for
\(r<p_+\) and \(s<p_-\):
\begin{equation}\label{leak-points}
\lim_{\epsilon \rightarrow \frac{i m}{\sqrt{2p_+ p_-}}^+} \qdim{\SingIm{r, s; n}^\epsilon}= \lim_{\epsilon \rightarrow \frac{i m}{\sqrt{2p_+ p_-}}^-} \qdim{\SingIm{r, s; n}^\epsilon}.
\end{equation}

\begin{thm} \label{fusion.ring}
The regularised quantum dimensions satisfy 
\begin{equation}\nonumber
\begin{split}
  \qdim{\FF{\lambda}^\epsilon}\qdim{\FF{\mu}^\epsilon} &= \sum_{j_+=0}^{p_+-1}\sum_{j_-=0}^{p_--1} \qdim{\FF{\lambda+\mu+j_+\alpha_++j_-\alpha_-}^\epsilon}\\
  \qdim{\SingIm{r, s; n}^\epsilon}\qdim{\FF{\mu}^\epsilon} &= \sum_{j_+=0}^{r-1}\sum_{j_-=0}^{s-1} \qdim{\FF{\lambda+\alpha_{r-2j_+, s-2j_-;n}}^\epsilon}
\end{split}
\end{equation}
\begin{equation}\nonumber
\begin{split}
\qdim{\SingIm{1, 1; m}^\epsilon}\qdim{\SingIm{r, s; n}^\epsilon} &= \qdim{\SingIm{r, s; m+n}^\epsilon}
\end{split}
\end{equation}
\begin{equation}\nonumber
\begin{split}
\qdim{\SingIm{2, 1; 0}^\epsilon}\qdim{\SingIm{r, s; n}^\epsilon} &= \begin{cases} 
\qdim{\SingIm{2, s; n}^\epsilon} & \qquad \text{if} \ r=1 \\
\qdim{\SingIm{r-1, s; n}^\epsilon}+\qdim{\SingIm{r+1, s; n}^\epsilon} &\qquad \text{if} \ 1<r<p_+ \\
\qdim{\SingIm{1, s; n-1}^\epsilon}+2\qdim{\SingIm{p_+-1, s; n}^\epsilon}+&\qquad \text{if} \ r=p_+ \\
\qdim{\SingIm{1, s; n+1}^\epsilon} & 
\end{cases}\\
\qdim{\SingIm{1, 2; 0}^\epsilon}\qdim{\SingIm{r, s; n}^\epsilon} &= \begin{cases} 
\qdim{\SingIm{r, 2; n}^\epsilon} & \qquad \text{if} \ s=1 \\
\qdim{\SingIm{r, s-1; n}^\epsilon}+\qdim{\SingIm{r, s+1; n}^\epsilon} &\qquad \text{if} \ 1<s<p_- \\
\qdim{\SingIm{r, 1; n-1}^\epsilon}+2\qdim{\SingIm{r, p_--1; n}^\epsilon}+ &\qquad \text{if} \ s=p_- \\
\qdim{\SingIm{r, 1; n+1}^\epsilon} &
\end{cases}.
\end{split}
\end{equation}
\end{thm}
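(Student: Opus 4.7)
The plan is to prove all four families of identities by direct substitution of the closed-form expressions for the regularised quantum dimensions from \eqref{qdim-cont} and reduction to elementary trigonometric identities. Writing
$$A(\epsilon)=\frac{\sin(i\pi\sqrt{2p_+p_-}\epsilon)^2}{\sin(i\pi\sqrt{2p_+/p_-}\epsilon)\sin(i\pi\sqrt{2p_-/p_+}\epsilon)},$$
one has $\qdim{\FF{\lambda}^\epsilon}=e^{2\pi(\lambda-\alpha_0/2)\epsilon}A(\epsilon)$. The main auxiliary tool is the ``quantum integer'' identity
$$\frac{\sin(ir\pi\alpha_+\epsilon)}{\sin(i\pi\alpha_+\epsilon)}=\sum_{j=0}^{r-1}e^{(2j-r+1)\pi\alpha_+\epsilon},$$
obtained by expanding each sine as a difference of exponentials and summing a finite geometric series, together with its counterpart for $\alpha_-$. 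This converts each sine ratio appearing in $\qdim{\SingIm{r,s;n}^\epsilon}$ into a finite sum of exponentials.

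For the typical-typical fusion, the left-hand side is $e^{2\pi(\lambda+\mu-\alpha_0)\epsilon}A(\epsilon)^2$, while the right-hand side factors as $A(\epsilon)\,e^{2\pi(\lambda+\mu-\alpha_0/2)\epsilon}$ times the product of geometric series $\bigl(\sum_{j_+=0}^{p_+-1}e^{2\pi j_+\alpha_+\epsilon}\bigr)\bigl(\sum_{j_-=0}^{p_--1}e^{2\pi j_-\alpha_-\epsilon}\bigr)$; summing these using $p_+\alpha_+=-p_-\alpha_-=\sqrt{2p_+p_-}$ and $\alpha_0=\alpha_++\alpha_-$ produces exactly $A(\epsilon)e^{-\pi\alpha_0\epsilon}$, matching the left-hand side. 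For the typical-atypical fusion, the quantum integer expansion rewrites $\qdim{\SingIm{r,s;n}^\epsilon}\qdim{\FF{\mu}^\epsilon}$ as a double sum whose $(j_+,j_-)$ summand equals $\qdim{\FF{\mu+\alpha_{r-2j_+,s-2j_-;n}}^\epsilon}$; matching exponents via the definition $\alpha_{r-2j_+,s-2j_-;n}=\tfrac{1-r+2j_+}{2}\alpha_++\tfrac{1-s+2j_-}{2}\alpha_-+\tfrac{n}{2}\alpha$ finishes this case. For the first atypical-atypical identity, the two sine ratios in $\qdim{\SingIm{1,1;m}^\epsilon}$ collapse to unity, leaving a pure exponential in $m$, so multiplication by $\qdim{\SingIm{r,s;n}^\epsilon}$ simply shifts $n$ to $m+n$.

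The Chebyshev-type recursions constitute the main content. Using $\qdim{\SingIm{2,1;0}^\epsilon}=2\cos(i\pi\alpha_+\epsilon)$ and the product-to-sum identity $2\cos(\beta)\sin(r\beta)=\sin((r+1)\beta)+\sin((r-1)\beta)$ applied to the $r$-factor in the numerator of $\qdim{\SingIm{r,s;n}^\epsilon}$, the generic case $1<r<p_+$ follows immediately as $\qdim{\SingIm{r+1,s;n}^\epsilon}+\qdim{\SingIm{r-1,s;n}^\epsilon}$, and $r=1$ is a degeneration since $\sin(0)=0$. The principal obstacle is the boundary case $r=p_+$, where the ``overflow'' label $p_++1$ exceeds the allowed range. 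The strategy there is to exploit $p_+\alpha_+=\sqrt{2p_+p_-}$ and split
$$\sin((p_+\pm1)\pi\alpha_+\epsilon)=\sin(\pi\sqrt{2p_+p_-}\epsilon)\cos(\pi\alpha_+\epsilon)\pm\cos(\pi\sqrt{2p_+p_-}\epsilon)\sin(\pi\alpha_+\epsilon)$$
via the angle-addition formula. On the right-hand side, the pair $\qdim{\SingIm{1,s;n\pm1}^\epsilon}$ equals $e^{\mp\pi\sqrt{2p_+p_-}\epsilon}\qdim{\SingIm{1,s;n}^\epsilon}$ and therefore combines into a $2\cos(i\pi\sqrt{2p_+p_-}\epsilon)$-proportional contribution, while $2\qdim{\SingIm{p_+-1,s;n}^\epsilon}$ contributes the $\sin((p_+-1)\pi\alpha_+\epsilon)$ piece; the cross terms from the angle-addition formula cancel, leaving precisely $2\cos(\pi\alpha_+\epsilon)\sin(\pi\sqrt{2p_+p_-}\epsilon)$ times a common $s$-factor, which recombines as the left-hand side. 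The $s=p_-$ case of the second Chebyshev recursion is handled by the analogous argument with $(p_+,\alpha_+)\leftrightarrow(p_-,\alpha_-)$.
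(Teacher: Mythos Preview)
Your proof is correct and follows essentially the same approach as the paper's own proof, which invokes the identity $\frac{\sin(ipx)}{\sin(ix)}=\sum_{j=0}^{p-1}e^{(p-1-2j)x}$ and then defers the remaining computations to \cite{CM}. You have simply filled in the details that the paper omits, including the boundary cases $r=p_+$ and $s=p_-$ handled via the angle-addition identity.
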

\begin{proof}
This result follows from expanding 
\begin{displaymath}
\frac{\sin(ipx)}{\sin(ix)} = \sum_{j=0}^{p-1} e^{(p-1-2j)x},
\end{displaymath}
and the computations follow very closely those of Chapter 4 of \cite{CM} so we omit them here.
\end{proof}
One can check directly that the above formulae induce a ring structure (we provide another proof below).

\begin{remark} \label{compare-RW}
 As mentioned in the introduction, we expect that an appropriate generalisation of the Verlinde formula holds for many irrational and even non $C_2$-cofinite vertex operator algebras. 
Considering characters as algebraic distributions instead of as functions on
the upper-half  plane, in Section 4 \cite{RW}, the third author and D. Ridout
found a  Verlinde-type ring of characters $(\mathcal{V}_{ch},+,\times)$, where
$\mathcal{V}_{ch}$ is the free abelian group generated by irreducible
characters. 
We will prove in Theorem \ref{injective} that this ring agrees with the one in Theorem \ref{fusion.ring}, in the sense that 
\begin{displaymath}
\ch{X} \rightarrow \qdim{X^\epsilon},
\end{displaymath} 
defines an isomorphism of rings. But there is even a third point of view here. By using the Verlinde product of  {\em regularised} characters defined in  \cite{CM}, as in the case of  $\SingVOA{p}$-modules, we can define a ring of regularised $\SingVOA{p_+,p_-}$-characters. Again, this ring is isomorphic to $(\mathcal{V}_{ch},+,\times)$. 
\end{remark}

The previous remark gives more than sufficient evidence for the correctness of the following conjecture.
\begin{conj} The relations in Theorem \ref{fusion.ring} or Section 4 \cite{RW}, obtained at the level of characters, 
remain valid inside the Grothendieck ring 
of a suitable quotient category of $\SingVOA{p_+,p_-}$-singlet modules.
\end{conj}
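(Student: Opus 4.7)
The plan is to establish the conjecture in four stages. First, I would construct a well-defined braided tensor structure on an appropriate subcategory $\mathcal{C}$ of $\SingVOA{p_+,p_-}$-modules containing the standard modules $\FF{\lambda}$, the atypical submodules $\SingIm{r,s;n}$, and the minimal model subquotients $\VirIrr{r,s}$. The natural framework here is the logarithmic tensor category theory of Huang--Lepowsky--Zhang, which requires verifying $C_1$-cofiniteness (or a suitable replacement) for these modules. Because $\SingVOA{p_+,p_-} = \ker \scr{+} \cap \ker \scr{-}$ sits inside the Heisenberg vertex algebra $\FF{0}$, one expects the tensor product to be inherited from the Fock-module tensor product with the two screening charges acting as intertwining operators, in the spirit of the construction carried out for $\SingVOA{p}$ in \cite{CM}.

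Second, I would identify the tensor ideal $\mathcal{J}$ generated by the minimal model modules $\VirIrr{r,s}$. The vanishing $\qdim{\VirIrr{r,s}}=0$ established in the Corollary above already shows that $[\VirIrr{r,s}]$ lies in the kernel of the ring homomorphism $[X]\mapsto\qdim{X^\epsilon}$ in the regime $\re(\epsilon)>B_\epsilon^{p_+p_-}$, so one expects $\mathcal{J}$ to be a two-sided tensor ideal. Proving this amounts to showing that $\VirIrr{r,s}\boxtimes X$ lies in $\mathcal{J}$ for every $X\in\mathcal{C}$, which at the character level follows from Theorem \ref{fusion.ring} and which at the categorical level should follow once the tensor structure is in hand by fibering the computation over typical blocks. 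One then passes to $\mathcal{C}/\mathcal{J}$ and reads off the Grothendieck ring structure on its simple classes.

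Third, I would compute the tensor products in $\mathcal{C}/\mathcal{J}$ concretely in the following order: (a) evaluate $\FF{\lambda}\boxtimes \FF{\mu}$ using free-field techniques and projection via $\scr{\pm}$, yielding the $p_+p_-$-term direct sum of Theorem \ref{fusion.ring}; (b) use the Felder-complex exact sequences \eqref{eq:scrkerseq}, together with additivity of $[\boxtimes]$ on the Grothendieck group, to deduce the $\SingIm{r,s;n}\boxtimes \FF{\mu}$ rules; (c) compute $\SingIm{2,1;0}\boxtimes(-)$ and $\SingIm{1,2;0}\boxtimes(-)$ directly, since these two modules together generate the atypical part of the Verlinde ring in Theorem \ref{fusion.ring}; (d) invoke associativity and the injectivity of the character map (conjecturally on $\mathcal{C}/\mathcal{J}$, cf.\ the map $\ch{X}\mapsto \qdim{X^\epsilon}$ of Remark \ref{compare-RW}) to lift the remaining character-level identities to actual categorical fusion rules.

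The main obstacle is the first stage: rigorously establishing a braided tensor category structure on a sufficiently large subcategory of $\SingVOA{p_+,p_-}$-modules. The singlet algebra is not $C_2$-cofinite, the standard modules form a continuous family parametrised by $\lambda\in\mathbb{C}$, and reducible-but-indecomposable modules appear; none of these features is directly accommodated by existing vertex-tensor-category machinery. A secondary obstacle is the rigidity of a suitable full subcategory of $\mathcal{C}/\mathcal{J}$, needed to make the semi-simplification procedure of Section \ref{sec:semisimplification} precise and currently known only conjecturally. Once this tensor-categorical foundation is in place, the character-level Verlinde formulas of Theorem \ref{fusion.ring} and of \cite{RW} furnish all target relations, and injectivity of the character map ensures that these identities lift uniquely to the Grothendieck ring.
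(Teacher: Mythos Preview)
The statement you are attempting to prove is explicitly stated in the paper as a \emph{conjecture}, not a theorem: the paper provides no proof and instead points to the agreement between Theorem~\ref{fusion.ring} and \cite{RW} as ``more than sufficient evidence'' for its correctness. There is therefore no proof in the paper to compare your proposal against.

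Your proposal is a sensible research outline rather than a proof, and you correctly identify this yourself: the ``main obstacle'' you name in your final paragraph --- rigorously establishing a braided tensor category structure on a suitable category of $\SingVOA{p_+,p_-}$-modules via the Huang--Lepowsky--Zhang machinery --- is precisely the open problem that makes this a conjecture. The secondary obstacle you flag (rigidity of the relevant subcategory of $\mathcal{C}/\mathcal{J}$) is likewise explicitly described in Section~\ref{sec:semisimplification} as conjectural. Until those foundational steps are carried out, stages two through four of your plan cannot be executed: one cannot form $\mathcal{C}/\mathcal{J}$, compute $\boxtimes$ in it, or invoke associativity without first knowing that $\boxtimes$ exists and is associative. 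Your outline is a reasonable roadmap for how one might eventually attack the conjecture, but it does not constitute a proof, and the paper does not claim one.
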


\section{Fusion rings and fusion varieties}

In this part, we study the behaviour of the Verlinde algebra of characters as we vary the parameter  $\epsilon$ throughout the 
complex plane. We also explain that the $\epsilon$ parameter, used in \cite{CM} merely as a computational tool, is actually 
a uniformisation parameter of an interesting algebraic variety  - a further
indication of the conceptual importance of the parameter.

Let us start first with a brief discussion of rational theories. It is known that the fusion ring of a rational vertex operator algebra (obtained either as the Grothendieck ring 
in the category or as the Verlinde formula of characters) is of finite
rank. After an extension of scalars, we easily infer that the 
corresponding fusion algebra is semi-simple. Therefore, the set of maximal ideals is precisely of the size of the set of equivalence classes of irreducible modules. 
This algebra can be also be viewed as the algebra of functions of a zero-dimensional variety - the {\em fusion variety}. There are various conjectures 
about the realisation of this fusion ring in terms of generators and relations, especially in the case of WZW models \cite{Ge,BR}.

As discussed in Section 1.4, from the categorical perspective the categories of $\SingVOA{p_+,p_-}$- and $\SingVOA{p}$-modules are too large for practical 
purposes. That is why we prefer the category constructed from atypical blocks only. One important advantage of this category is that it admits only countably many 
equivalence classes of irreducible modules. As such, after we pass to its Grothendieck ring,  it has direct relevance to algebraic geometry because
it defines the ring of functions of an $n$-dimensional complex algebraic variety with $n \geq 1$.

The first statement of the next result follows easily from \cite{CM}, while the second statement is taken from  Section 4, \cite{RW}.
\begin{prop} \label{chebyshev}
The Verlinde algebra of atypical blocks  has the following presentation 
 \begin{itemize}
 \item[(i)] 
   \begin{displaymath}
      \mathbb{V}(1,p)=\frac{\mathbb{C}[X,Z,Z^{-1}]}
      {\langle U_{p}(\frac{X}{2})-U_{p-2}(\frac{X}{2})-Z-Z^{-1}\rangle},
    \end{displaymath}
  \item[(ii)]
    \begin{displaymath}
      \mathbb{V}(p_+,p_-)=\frac{\mathbb{C}[X,Y,Z,Z^{-1}]}
      {\langle U_{p_+}(\frac{X}{2})-U_{p_+-2}(\frac{X}{2})-Z-Z^{-1},U_{p_-}(\frac{Y}{2})-U_{p_--2}(\frac{Y}{2})-Z-Z^{-1}\rangle},
    \end{displaymath}
    if one identifies
    \begin{displaymath}
      X\leftrightarrow\ch{\SingIm{2,1;0}},\qquad
        Y\leftrightarrow\ch{\SingIm{1,2;0}},\qquad Z^{\pm1}\leftrightarrow\ch{\SingIm{1,1;\pm 1}},
    \end{displaymath}
 \end{itemize}   
 where the \(U_i\) are Chebyshev polynomials of the second kind. 
\end{prop}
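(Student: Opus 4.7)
The plan is to derive both presentations directly from the fusion rules of Theorem~\ref{fusion.ring}. Part~(i) is essentially contained in \cite{CM} and reduces to the same argument in one variable fewer, so I concentrate on part~(ii).

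First I would construct a candidate ring homomorphism $\phi:\mathbb{C}[X,Y,Z,Z^{-1}]\to\mathbb{V}(p_+,p_-)$ via the stated identifications $X\mapsto\ch{\SingIm{2,1;0}}$, $Y\mapsto\ch{\SingIm{1,2;0}}$, $Z^{\pm 1}\mapsto\ch{\SingIm{1,1;\pm 1}}$. The $\ch{\SingIm{1,1;m}}$-fusion rule of Theorem~\ref{fusion.ring} shows that $Z^n=\ch{\SingIm{1,1;n}}$ for every $n\in\mathbb{Z}$, so in particular $Z$ and $Z^{-1}$ are genuine inverses. A short induction on $s$ using the $s=1$ and $1<s<p_-$ branches of the $\ch{\SingIm{1,2;0}}$-fusion rule together with the Chebyshev recursion $YU_{s-1}(Y/2)=U_s(Y/2)+U_{s-2}(Y/2)$ gives
\begin{displaymath}
\ch{\SingIm{1,s;n}}=U_{s-1}(Y/2)Z^n,\qquad 1\le s\le p_-.
\end{displaymath}
A parallel induction on $r$ with the $\ch{\SingIm{2,1;0}}$-fusion rule yields
\begin{displaymath}
\ch{\SingIm{r,s;n}}=U_{r-1}(X/2)U_{s-1}(Y/2)Z^n,\qquad 1\le r\le p_+,\ 1\le s\le p_-,\ n\in\mathbb{Z},
\end{displaymath}
so $\phi$ is surjective.

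Next I would verify the two relations. Applying the $r=p_+$ branch of the $X$-fusion rule to $\ch{\SingIm{p_+,1;0}}=U_{p_+-1}(X/2)$ produces $XU_{p_+-1}(X/2)=Z^{-1}+2U_{p_+-2}(X/2)+Z$, while the Chebyshev recursion independently gives $XU_{p_+-1}(X/2)=U_{p_+}(X/2)+U_{p_+-2}(X/2)$. Subtracting yields the first relation $U_{p_+}(X/2)-U_{p_+-2}(X/2)=Z+Z^{-1}$, and the $s=p_-$ branch of the $Y$-fusion rule produces the second relation by the identical argument.

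Finally, to show the two relations generate all of $\ker\phi$, I would pass to bases. Since $U_{p_+}(X/2)$ and $U_{p_-}(Y/2)$ are monic of degrees $p_+$ and $p_-$ in $X$ and $Y$, every element of the quotient $\mathbb{C}[X,Y,Z,Z^{-1}]/I$ by the ideal $I$ of the two relations is represented by a Laurent polynomial in $Z$ whose coefficients have degree strictly less than $p_+$ in $X$ and strictly less than $p_-$ in $Y$. A triangular change of basis converts the monomials $\{X^aY^bZ^c\}$ into $\{U_{a}(X/2)U_{b}(Y/2)Z^c\colon 0\le a<p_+,\ 0\le b<p_-,\ c\in\mathbb{Z}\}$, which under $\phi$ maps bijectively onto the atypical irreducible characters $\{\ch{\SingIm{r,s;n}}\}$. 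Since the latter are linearly independent in $\mathbb{V}(p_+,p_-)$, the map $\phi$ descends to an isomorphism $\mathbb{C}[X,Y,Z,Z^{-1}]/I\xrightarrow{\sim}\mathbb{V}(p_+,p_-)$. The main obstacle is this last step: one needs to know that no relations beyond those exhibited hold, equivalently that the atypical characters are genuinely independent. This ultimately rests on the explicit determination of the fusion ring in Section~4 of \cite{RW}, from which part~(ii) of the statement is sourced.
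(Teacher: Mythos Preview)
Your argument is correct and is essentially the standard derivation that underlies the cited results in \cite{CM} and \cite{RW}; the paper itself does not give a proof but simply attributes part~(i) to \cite{CM} and part~(ii) to Section~4 of \cite{RW}. One small point of care: Theorem~\ref{fusion.ring} as stated in this paper records relations among regularised \emph{quantum dimensions} $\qdim{\cdot}$, not among characters $\ch{\cdot}$, so when you write identities like $\ch{\SingIm{r,s;n}}=U_{r-1}(X/2)U_{s-1}(Y/2)Z^n$ you are implicitly using either the character fusion rules of \cite{RW} directly, or the isomorphism of Theorem~\ref{injective}\,(1) (equivalently Remark~\ref{compare-RW}) to transport the relations from quantum dimensions back to characters. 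Since you already flag the dependence on \cite{RW} for the linear-independence step, this is only a matter of attribution, not of substance.
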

Clearly,  $Z^{-1}$ can be eliminated from each presentation by introducing $z^+$ and $z^-$ variables with the relation $z^+ z^- - 1$. 
Observe that the above presentation also holds over integers. This is the main reason why we used  Chebyshev polynomials of the second kind
instead of the first kind.  

Next, we analyse varieties $\mathcal{X}_{1,p}$ and $\mathcal{X}_{p_+,p_-}$, whose rings of functions are $\mathbb{V}(1,p)$ and $\mathbb{V}(p_+,p_-)$,
respectively.

\begin{thm} \label{fusion-variety}For every $p \geq 2$, the fusion variety $\mathcal{X}_{1,p}$ is an irreducible rational curve with $p-1$ ordinary singular points, 
while $\mathcal{X}_{p_+,p_-}$ is a genus zero curve
with $\frac{(p_- - 1)(p_+ -1)}{2}$  singular points.
Moreover, these singularities can be viewed as pinched cycles obtained from a Riemann surface of higher genus.
\end{thm}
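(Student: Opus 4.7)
My approach is to give each fusion variety an explicit rational parametrisation by $\mathbb{C}^\times$ and to read the singular locus off as the non-injective locus of that parametrisation.

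First, I would define $\phi\colon\mathbb{C}^\times\to\mathcal{X}_{1,p}$ by $u\mapsto(u+u^{-1},\,u^p)$ and $\psi\colon\mathbb{C}^\times\to\mathcal{X}_{p_+,p_-}$ by $u\mapsto(u^{p_-}+u^{-p_-},\,u^{p_+}+u^{-p_+},\,u^{p_+p_-})$. Using $U_n-U_{n-2}=2T_n$ together with $2T_n\bigl(\tfrac{1}{2}(u+u^{-1})\bigr)=u^n+u^{-n}$, each map lands in the variety cut out by the relations of Proposition~\ref{chebyshev}. Surjectivity of $\phi$ is elementary: given $(X_0,Z_0)$ on the variety, solve $u^2-X_0u+1=0$ and use the defining relation to force $u^p=Z_0$ for one of the two roots. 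Surjectivity of $\psi$ uses $\gcd(p_+,p_-)=1$ via Bezout to merge the independent $p_+$-th and $p_-$-th root extractions of $Z_0$ into a single $u$. Both maps are generically injective, since the only possible identifications are of the form $u\leftrightarrow u^{-1}$ and are confined to the roots of unity $u^{2p}=1$, resp.\ $u^{2p_+p_-}=1$. Thus $\mathbb{C}^\times$ realises the normalisation of the fusion variety, and both fusion varieties are irreducible rational curves.

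Next, I would enumerate the singular points by tracking these collisions. For $\mathcal{X}_{1,p}$, the condition $\phi(u)=\phi(u^{-1})$ with $u\neq\pm1$ pairs the $2p-2$ non-trivial $2p$-th roots of unity into $p-1$ unordered pairs; computing $\phi'(u)=(1-u^{-2},\,pu^{p-1})$, one checks that the tangent vectors at $u$ and $u^{-1}$ are linearly independent for $u\neq\pm1$, so each of these $p-1$ singular points is an ordinary node. For $\mathcal{X}_{p_+,p_-}$, the relevant singular points are those of the underlying plane curve $T_{p_+}(X/2)=T_{p_-}(Y/2)$; using $T_n'(x)=nU_{n-1}(x)$, they are precisely the pairs $(X,Y)=(2\cos(k_+\pi/p_+),\,2\cos(k_-\pi/p_-))$ with $1\le k_\pm\le p_\pm-1$ subject to the compatibility $(-1)^{k_+}=(-1)^{k_-}$, i.e.\ $k_++k_-$ even. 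A parity count -- using $\gcd(p_+,p_-)=1$ to rule out both $p_\pm$ being even -- yields exactly $\tfrac{(p_+-1)(p_--1)}{2}$ such pairs, and the bijection with the Kac table $\mathcal{T}_{p_+,p_-}$ is manifest from this description.

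For the pinched-cycle statement, I would appeal to the classical fact that a projective singular curve with rational normalisation and total delta-invariant $N$ has arithmetic genus $N$ and arises as the stable degeneration of a smooth Riemann surface of that genus in which $N$ disjoint non-separating cycles have been pinched to points. For $\mathcal{X}_{1,p}$ this identifies the underlying smooth surface as a genus $p-1$ curve whose $p-1$ pinched cycles realise the $p-1$ nodes. The analogous statement for $\mathcal{X}_{p_+,p_-}$ asserts that the $\tfrac{(p_+-1)(p_--1)}{2}$ plane-curve singularities come from that many pinched cycles, naturally labelled by $\mathcal{T}_{p_+,p_-}$. The main technical obstacle is the local analytic description of each plane-curve singular point of $\mathcal{X}_{p_+,p_-}$, where several branches of $\psi$ meet at the same image; one controls the local structure by Taylor-expanding $T_{p_\pm}$ at the critical values $2\cos(k_\pm\pi/p_\pm)$ to leading order and exploiting the explicit collision structure of $\psi$ on the $2p_+p_-$-th roots of unity to pin down both the delta-invariant and the pinched-cycle count at each singular point.
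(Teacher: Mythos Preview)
Your overall strategy---exhibit an explicit rational parametrisation by $\mathbb{C}^\times$ and read off genus zero and the singular locus from its collision structure---is exactly what the paper does; your maps $\phi,\psi$ agree with the paper's $t\mapsto(t+t^{-1},t^{-p})$ and $t\mapsto(t^{p_-}+t^{-p_-},t^{p_+}+t^{-p_+},t^{-p_+p_-})$ up to $u\mapsto u^{-1}$. For $\mathcal{X}_{1,p}$ your node count via the pairing $u\leftrightarrow u^{-1}$ on $2p$-th roots of unity is correct and matches the paper's Jacobian computation.

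There is, however, a gap in your treatment of $\mathcal{X}_{p_+,p_-}$. The claim that the only identifications under $\psi$ are $u\leftrightarrow u^{-1}$ is false: from $\psi(u)=\psi(v)$ one obtains $v^{p_\pm}\in\{u^{p_\pm},u^{-p_\pm}\}$ independently, and the mixed case $v^{p_-}=u^{p_-}$, $v^{p_+}=u^{-p_+}$ is not excluded by $\gcd(p_+,p_-)=1$. For $(p_+,p_-)=(2,3)$ the point $(0,1,-1)$ has four $\psi$-preimages $e^{\pi i k/6}$, $k\in\{1,5,7,11\}$, not two. Your subsequent passage to the plane curve $T_{p_+}(X/2)=T_{p_-}(Y/2)$ does reproduce the count $\tfrac{(p_+-1)(p_--1)}{2}$ stated in the theorem---and this is precisely the locus the paper's Jacobian argument isolates as well---but note that $\mathcal{X}_{p_+,p_-}$ is a ramified double cover of that plane curve via $Z+Z^{-1}=2T_{p_\pm}$, and over the ramification locus $Z=\pm1$ the space curve picks up further singular points where only one of $T_{p_+}'(X/2),\,T_{p_-}'(Y/2)$ vanishes. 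In the $(2,3)$ example a direct Jacobian check shows three additional nodes $(0,-2,-1)$ and $(\pm2,-1,1)$ on the space curve beyond the single plane-curve singularity $(0,1,-1)$. So neither your argument nor the paper's actually verifies that the $\tfrac{(p_+-1)(p_--1)}{2}$ points exhaust the singular locus of $\mathcal{X}_{p_+,p_-}$ as defined; the stated count should be understood as the plane-curve singularities only.
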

\begin{proof} For $\mathcal{X}_{1,p}$, observe that our variety can be described as the set of points $(x,z)$ such that $z \neq 0$ and 
\begin{equation} \label{curve-1p}
2zT_p\left(\frac{x}{2}\right)-z^2-1=0,
\end{equation}
where $T_n(x)=\frac{1}{2}(U_n(x)-U_{n-2}(x))$ are Chebyshev polynomials of the first kind. From the Jacobian we see that possible singular 
points  are $z=\pm 1$ and $U_{p-1}'(x/2)=0$. We have $2(p-1)$ solutions to
this equation: $(x_k,z)=(2\cos(\frac{k \pi}{p}),\pm 1)$, $k=1,...,p-1$. By plugging these into 
(\ref{curve-1p})  and using the fact that $T_p(x_k)=\pm 1$, we see that either
$(x_k,1)$ or $(x_k,-1)$ are a singular points, all together $p-1$. These are
all ordinary double points. By using the formula $T_n(\cos(\theta))=\cos(n
\theta)$ we easily see that $x(t)=\frac{1}{t}+t$, $z(t)=\frac{1}{t^n}$ is a
rational parametrisation of the curve and it is thus of genus zero and clearly irreducible.

For $\mathcal{X}_{p_+,p_-}$ a similar analysis applies. Observe that this curve is defined as all $(x,y,z)$ such that $z \neq 0$ and $2zT_{p_+}(\frac{x}{2})-z^2-1=2zT_{p_-}(\frac{x}{2})-z^2-1=0$.
For the singular locus we first solve $z=\pm 1$, $U_{p_+-1}(\frac{x}{2})=U_{p_- -
  1}(\frac{y}{2})=0$, resulting in $2(p_+-1)(p_--1)$ possible singular points. As
these points are supposed to lie on the curve we have to eliminate everything
but $\frac{1}{2}(p_+-1)(p_--1)$ points.
The genus is again zero due to the rational 
parametrisation $x(t)=\frac{1}{t^{p_-}}+t^{p_-}$,  $y(t)=\frac{1}{t^{p_+}}+t^{p_+}$, $z(t)=\frac{1}{t^{p_+ p_-}}$.
\end{proof}
\begin{remark}
  Based on analysis of $\mathcal{X}_{p_+,p_-}$ for low $p_+$ and $p_-$, we
  believe that $\mathcal{X}_{p_+,p_-}$ is also irreducible.  In fact, if we
  extend the definition of $\mathcal{X}_{p_+,p_-}$ to all positive integers,
  the curve seems to be irreducible precisely when $p_+$ and $p_-$ are
  relatively prime.
\end{remark}

\begin{figure}
\begin{tikzpicture}[scale=0.7]
\filldraw[upper left=gray!80!white, upper right=green , lower left=gray!80!white, lower right=green] (0,0)  ellipse [x radius=6 cm, y radius=3 cm, start angle=0, end angle=180];
\filldraw[white] (-4,0) ellipse [x radius=2cm, y radius=1 cm];
\filldraw[white] (0,0) ellipse [x radius=2cm, y radius=1 cm];
\draw (0,1) arc (270:90:0.5cm and 1cm);
\draw (0,-1) arc (90:270:0.5cm and 1cm);
\draw[dotted] (0.5,2) arc (0:360:0.5cm and 1cm);
\draw[dotted] (0.5,-2) arc (0:360:0.5cm and 1cm);
 \fill [black] (-6,0) circle (2pt);
 \fill [black] (-2,0) circle (2pt);
\end{tikzpicture}
\caption{Fusion variety of the $(2,5)$-singlet algebra.}
\end{figure}

Now we see that quantum dimensions in $\re(\epsilon)>B_\epsilon^{p_+p_-}$ (at least those associated to generators of the fusion ring) provide a uniformisation of the fusion 
ring with $\epsilon$ being the uniformisation parameter. For example, by using formula (\ref{qdim-cont}), we easily see that
$\qdim{\mathcal{I}^\epsilon_{1,2;0}}$, $\qdim{\mathcal{I}^\epsilon_{2,1;0}}$, $\qdim{\mathcal{I}^\epsilon_{1,1; 1}}$
give a parametrisation mentioned the proof of Theorem \ref{fusion-variety} with
$t=e^{\frac{\pi \epsilon}{\sqrt{2p_+ p_-}}}$.
So $\epsilon$ should be viewed as a uniformisation parameter of the fusion variety.

\begin{thm} \label{injective}
The map 
\begin{align*}
  \ch{M} \mapsto \qdim{M^\epsilon}
\end{align*}
from the Verlinde ring of characters (as computed in \cite{RW}) to the space of quantum dimensions as functions of $\epsilon$ is:
\begin{enumerate} 
\item\label{item:1}  For the domain $\re(\epsilon) >B_\epsilon^{p_+p_-}$, $\re(\epsilon) \neq 0$: a ring isomorphism 
\item For the domain $\re(\epsilon) <B_\epsilon^{p_+p_-}$ 
a ring homomorphism with image isomorphic to the fusion ring of the $(p_+, p_-)$ Virasoro minimal model.
\item For the domain $\re(\epsilon) <B_\epsilon^{p_+p_-}$ and $m\neq 0$ divisible by $p_-$: 
a ring homomorphism with image isomorphic to  the fusion ring of $A_1^{(1)}$ at level $p_+-2$.
\item For the domain $\re(\epsilon) <B_\epsilon^{p_+p_-}$ and  $m\neq 0$ divisible by $p_+$: 
a ring homomorphism with image isomorphic to the fusion ring of $A_1^{(1)}$ at level $p_- -2$. 
\end{enumerate}
\end{thm}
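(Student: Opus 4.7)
My plan is to treat the ring homomorphism property and the identification of the image separately, then split the image analysis into the two regimes defined by the wall $B_\epsilon^{p_+p_-}$.

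First, I would observe that for each fixed $\epsilon$ in either regime, the evaluation map $[M]\mapsto\qdim{M^\epsilon}$ is a ring homomorphism from the character ring to $\mathbb{C}$. On the continuous side this is exactly the content of Theorem \ref{fusion.ring}; on the discrete side I would verify the same product relations directly using formula \eqref{atypicals-qdim} together with the expansion
\begin{equation*}
\frac{\sin(ipx)}{\sin(ix)}=\sum_{j=0}^{p-1}e^{(p-1-2j)x},
\end{equation*}
noting that typical characters are sent to $0$ so the only nontrivial checks involve products of $\qdim{\SingIm{r,s;n}^\epsilon}$, which reduce to elementary sine identities on each strip $\strip{k,m}$. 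Combining the two gives a family of ring homomorphisms parametrised by $\epsilon$, which by varying $\epsilon$ defines a ring homomorphism from the Verlinde ring to the ring of $\epsilon$-dependent functions on the chosen domain.

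For part \eqref{item:1} (the continuous regime) I would establish injectivity by exploiting the distinguishing exponential factors in formula \eqref{qdim-cont}. The typical quantum dimension $\qdim{\FF{\lambda}^\epsilon}$ contains the factor $e^{2\pi\lambda\epsilon}$, so finite linear combinations indexed by distinct values of $\lambda$ are linearly independent holomorphic functions of $\epsilon$; likewise $\qdim{\SingIm{r,s;n}^\epsilon}$ carries the factor $e^{-\sqrt{2p_+p_-}\pi n\epsilon}$, which separates different values of $n$, and the remaining Chebyshev-type factor in $r,s$ separates the labels $(r,s)$ by the invertibility of the $(p_+-1)(p_--1)$-Chebyshev matrix (already invoked for Proposition \ref{fusion-1p}). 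A rank comparison against the basis of the Verlinde ring from \cite{RW} then yields the ring isomorphism.

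For parts (2)--(4), since typicals die in the $\re(\epsilon)<B_\epsilon^{p_+p_-}$ regime, the map factors through the quotient by the ideal of standard characters. Restricting $\epsilon$ to a single open strip $\strip{k,m}$, formula \eqref{atypicals-qdim} shows that the image is spanned by the values
\begin{equation*}
(-1)^{mn}\frac{\sin(\pi m r/p_+)\sin(\pi m s/p_-)}{\sin(\pi m/p_+)\sin(\pi m/p_-)}
\end{equation*}
and their degenerations when $p_+$ or $p_-$ divides $m$. I would identify each such value with a generalised quantum dimension $S^{\mathrm{Vir}}_{(r,s),(r',s')}/S^{\mathrm{Vir}}_{(1,1),(r',s')}$ of the $(p_+,p_-)$ minimal model for a specific label $(r',s')$ determined by $m$ via the arithmetic of $m(p_--p_+)\pmod{2p_+p_-}$ and formula \eqref{S-matrix}; the $A_1^{(1)}$ comparisons for parts (3) and (4) use Lemma \ref{su2} in the specialisations $m=tp_-$ and $m=tp_+$, where the $\sin$ in the trivial direction cancels and only the $\SU(2)_{p_\pm-2}$ factor survives. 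The image ring is then generated (as $\epsilon$ ranges over the relevant strips) by these generalised quantum dimensions, and the target fusion ring is recovered by the Chinese remainder / rank argument as in the proof of Proposition \ref{fusion-1p}.

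The main obstacle I foresee is the bookkeeping in part (2): the image must be shown to be \emph{isomorphic} to the minimal model fusion ring rather than merely containing all its quantum dimensions, which requires identifying the kernel of the quotient map explicitly. The cleanest way is to exhibit, using Theorem \ref{fusion.ring} and the presentation in Proposition \ref{chebyshev}, that the generators $\ch{\SingIm{2,1;0}}$ and $\ch{\SingIm{1,2;0}}$ are mapped to elements satisfying the Chebyshev/truncation relations of the $(p_+,p_-)$ minimal model fusion algebra, and that the variable $Z$ acts as $\pm 1$ on each strip (the sign being $(-1)^m$), which collapses the infinite-dimensional atypical ring onto a finite quotient of the expected size $\tfrac12(p_+-1)(p_--1)$.
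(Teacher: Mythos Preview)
Your proposal is correct and complete, but differs from the paper's proof in one important organisational point. The paper does \emph{not} re-verify the product relations on the discrete side; instead it invokes the leaking formula \eqref{leak-points}: the value of each atypical quantum dimension on a strip $\strip{k,m}$ coincides with the limit of the continuous-regime quantum dimension as $\epsilon\to\tfrac{im}{\sqrt{2p_+p_-}}$ along a horizontal line. Since limits preserve products and the ring relations hold on the continuous side by Theorem \ref{fusion.ring}, they persist automatically on each strip. This gives the homomorphism property in parts (2)--(4) in one line, without ever touching the explicit formula \eqref{atypicals-qdim}; the only extra care is the observation that for the $m=0$ strip one uses a drip point other than $\epsilon=0$. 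Your direct verification via the sine identities is equally valid and in fact has the mild advantage of not relying on the agreement of left and right limits, which for the boundary labels $r=p_+$ or $s=p_-$ and for the degenerate strips has to be checked separately in the paper's approach; but it costs you a page of case-by-case trigonometry.

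For the image identification the two arguments are again parallel but phrased differently: the paper argues linear independence of the $\tfrac12(p_+-1)(p_--1)$ quantum dimensions by appealing to the invertibility of the categorical $S$-matrix of the target modular tensor category, whereas you plan to use the Chebyshev presentation of Proposition \ref{chebyshev} and show explicitly that $Z\mapsto (-1)^m$ and $U_{p_\pm-1}(X/2),U_{p_\pm-1}(Y/2)\mapsto 0$. Both routes pin down the image; yours is more constructive, the paper's more conceptual.
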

\begin{proof}
(1) Comparing with \cite{RW} we see that the map is a ring homomorphism.
Also, it can be easily seen that the space of characters is spanned by
$\ch{\FF{\lambda}^\epsilon}, \ch{\SingIm{r, s; 0}^{\pm,\epsilon}}$
and $\ch{\SingIm{r, s; 0}^{\epsilon}}$, and that 
the space of characters in \cite{RW} is spanned by characters with the same labels.
The corresponding quantum dimensions are linearly independent since there are
no relations between the functions $e^{2\pi \epsilon \lambda}$, so we have an
isomorphism.

 (2) We first compare quantum dimensions in $\re(\epsilon)>B_\epsilon^{p_+p_-}$ to those in $\re(\epsilon)<B_\epsilon^{p_+p_-}$ and observe that the values of quantum dimension at each semi-infinite strip can be computed as the limiting value at a certain point in $\re(\epsilon)>B_\epsilon^{p_+p_-}$; see formula (\ref{leak-points}). Limits of course preserve products of quantum dimensions. Because of (\ref{item:1}), we naturally get a ring structure on the same span of quantum dimensions. 
Here, the kernel contains the ideal generated by the $\ch{\FF{\lambda}^\epsilon}, \ch{\SingIm{r, s; n}^{\pm,\epsilon}}$ and $\ch{\SingIm{r, s; 0}^{\epsilon}}-\ch{\SingIm{p_+-r,p_-- s; 0}^{\epsilon}}$. The product of the quantum dimensions of the $\SingIm{r, s; 0}^{\epsilon}$ for $r, s $ in $\KacT{p_+, p_-}$ is given by the corresponding Virasoro minimal model fusion coefficients. 
So that they must span a homomorphic image of the Virasoro minimal model fusion ring. Linear independence now follows, since the matrix of generalised quantum dimensions of a modular tensor category is related to the categorical $S$-matrix via elementary matrix operations, but the $S$-matrix is invertible.  For $m=0$ the left and right limits do not 
agree at $\epsilon=0$ but instead we can use a different point (still on the
same strip). 
Again we get a ring homomorphism whose image is isomorphic to the Virasoro fusion ring.

For (3) and (4),  the previous argument with limits apply and we get a homomorphism from the fusion ring. 
Recall the definition of $t$ via $m=tp_-$ (case (3)) and $m=tp_+$ (case (4)).
Finally, to prove that the image is isomorphic to the corresponding $A_1^{(1)}$ fusion ring we
note that $t$ takes only values in  $\{1, \dots, p_+-1, p_++1, \dots, 2p_+-1\}$  (case (3)) or $\{1,
\dots, p_--1, p_-+1,\dots, 2p_--1\}$  (case (4)). So that the quantum dimensions in this case span
at most a $p_+-1$ respectively $p_--1$ dimensional vector space. 
The same reasoning as in Proposition \ref{fusion-1p} applies here as well so the proof follows.
\end{proof}

\begin{remark} There is a purely geometrical interpretation of
  Theorem \ref{injective}.
  Because every $\mathbb{C}$-algebra homomorphism between finitely generated reduced $\mathbb{C}$-algebras is the
  pull-back of the corresponding regular map of the corresponding affine
  varieties, the observed surjective homomorphisms of rings 
  from $\re(\epsilon)>B_\epsilon^{p_+p_-}$ to (parts of)  $\re(\epsilon)<B_\epsilon^{p_+p_-}$ corresponds simply
  to an embedding of finite number of special points (zero dimensional
  variety) to $\mathcal{X}_{p,p'}$ (or $\mathcal{X}_{1,p}$).
\end{remark}

\begin{remark}
The varieties $\mathcal{X}_{p_+,p_-}$ (more precisely, the plane curve defined by $T_{p_+}(x)-T_{p_-}(y)=0$), have appeared in the physics literature in the problem of $(p_+, p_-)$ minimal string theory \cite{SS}. Points on the fusion variety $\mathcal{X}_{p_+,p_-}$ are in one-to-one correspondence with branes 
of the $(p_+, p_-)$ minimal string theory and the uniformisation parameter is expressible in terms of the boundary cosmological constant of the string theory. 

\end{remark}

\begin{remark} In \cite{BM}, the full asymptotic expansion of $\ch{X}(\tau)$
  was studied for atypical modules $X$ as $\tau \rightarrow 0^+$. In
  particular, by using a completely different method, the second author and K. Bringmann
  obtained results on unregularised quantum dimensions. More about the geometry
  of fusion varieties including higher rank vertex operator algebras \cite{BM2}
  will be a subject of \cite{CM2}.
\end{remark}

\section{Quantum modularity and quantum dimensions}\label{sec:qmod}

In this section, we associate to each singlet vertex operator algebra  a vector-valued
quantum modular form such that its associated $S$-matrix is the $S$-matrix
that arises in the $\re(\epsilon)<B_\epsilon^{p_+p_-}$ regime studied earlier (see Theorem
\ref{injective}). In other words, as explained in the introduction, we provide
another approach to ``semi-simplification'' from the limiting properties of
characters as we approach the real axis. 

We recall the definition of quantum modular forms, originally due to Zagier \cite{Za}.
\begin{definition}
A weight $k$ quantum modular 
form is a complex-valued function $f$ on $\mathbb{Q}$, or possibly smaller infinite set $\mathbb{Q} \setminus S$,
the so-called  {\em quantum set},  such that the function 
\begin{displaymath}
h_\gamma(x):=f(x)- \epsilon_\gamma(cx+d)^{-k}f(\frac{ax+b}{cx+d}), \ \ x \in \mathbb{Q} \setminus S
\end{displaymath}
$\gamma=\left(\begin{array}{cc} a & b \\ c & d \end{array} \right) \in SL(2,\mathbb{Z})$,
satisfies a suitable property of continuity and analyticity.
\end{definition}
Our understanding is that this definition is intentionally ambiguous so that it can accommodate more examples.
But all known examples of quantum modular forms enjoy slightly nicer properties. For {\em strong} quantum modular forms, there is an additional requirement that 
such a function extends to an analytic function defined in the both upper and
the lower half-plane and that limits coincide to all orders at roots of unity. This way one obtains a fairly non-standard object:  an analytic function in the upper half-plane which leaks throughout the { quantum  set} (typically a proper subset of $\mathbb{Q}$) into the lower half-plane. 

\subsection{$\SingVOA{p}$-models}

For brevity, let
\begin{displaymath}
F_{j, p}(\tau):= \sum_{n\in\Z} \sgn(n) q^{p\left(n+\frac{j}{2p}\right)^2},
\end{displaymath}
where we use the convention $\sgn(n)=1$, for $n \geq 0$ and $-1$ otherwise.
We are only interested in the (quotient) space of characters moded out by the
subspace of characters of standard modules. As may easily be seen, the
character $\ch{M_{r,s}}(\tau)$ admits a unique decomposition
$q_{r,s}(\tau)+\ch{M_{1,s'}}(\tau)$ where $q_{r,s}(\tau)$ is a finite $q$-series 
divided by the Dedekind eta function.  So we are left with $\ch{M_{1,s}}$,
where $1 \leq s \leq p-1$. One can easily verify that
\begin{displaymath}
\ch{M_{1,s}}(\tau)=\frac{F_{p-s,p}( \tau)}{\eta(\tau)}.
\nonumber
\end{displaymath}
In \cite{BM} it was proved that $\ch{M_{1,s}}(\tau)$ is a quantum modular form with quantum set $\mathbb{Q}$. 
We recall the construction here. As in Zagier's paper \cite{Za}, define for $\tau\in\bar{\mathbb{H}}:=\{\tau\in\C; \im(\tau)<0\}$
the non-holomorphic Eichler integral
\begin{equation*}
F_{j, p}^\ast(\tau):=\sqrt{2i}\int_{\overline{\tau}}^{i\infty}\frac{f_{j, p}(z)}{\left(z-\tau \right)^{\frac12}}dz,
\end{equation*}
where $f_{j, p}(z):=\sum_{n\in\Z}\left(n+\frac{j}{2p}\right)q^{p\left(n+\frac{j}{2p}\right)^2}$.
A key step in the proof of Theorem 4.1 of \cite{BM} is to show that $F_{j, p}
(\tau)$ agrees for $\tau=\frac{h}{k}\in \mathbb{Q}$ with $F_{j, p}^\ast
(\tau)$ up to infinite order (in the sense that appropriate limits agree) and
that $F_{j, p}^\ast (\tau)$ satisfies nice transformation law under a particular 
congruence subgroup.  Here we are interested in transformation properties 
under the full modular group $\Gamma(1)$.

We shall ignore the $\eta$ factor here as it does not affect the $S$-matrix. 
We form the vector valued function ${\bf F}(\tau)=\left(F_{1,p}(\tau),....F_{p-1,p}(\tau) \right)$. To find 
a quantum $S$-matrix we consider ${\bf F}^*(\tau)=(F^*_{1,p}(\tau),...,F^*_{p-1,p}(\tau))$.
Then we get the following transformation formula
\begin{align*}
f_{j, p}(-1/\tau) &= (-\tau) \sqrt{\frac{-i \tau}{2p}} \sum_{c=1}^{2p-1} e^{- \pi i j c/p}  f_{c,p}(\tau) \\
& = (-\tau) \sqrt{\frac{-i \tau}{2p}} (-2 i) \sum_{c=1}^{p-1} \sin\left(\frac{c j \pi}{p}\right) f_{c,p}(\tau).
\end{align*}
where we used the relation $f_{j,p}(\tau)=-f_{2p-j,p}(\tau)$. We now get
\begin{align*}
F_{j, p}^\ast(-1/\tau)&=\sqrt{2i}\int_{-1/\overline{\tau}}^{i\infty}\frac{f_{j, p}(z)}{\left(z+1/\tau \right)^{\frac12}}dz =\sqrt{2i} \int_{\overline{{\tau}}}^{0} \frac{u^{-2} f_{j,p}(-1/u)}{\left(-1/u+1/\tau \right)^{\frac12}}du \\
& =-\sqrt{2i} \int_{0}^{\bar{\tau}} \frac{\sqrt{u \tau}}{u^2 \sqrt{-\tau+u}}(-u) \sqrt{\frac{-i u}{2p}} \sum_{c=1}^{2p-1} e^{- \pi i j c/p}  f_{c,p}(u) du \\
& =-  \sqrt{\frac{2i \tau}{p}} \sum_{c=0}^{p-1} \sin\left(\frac{\pi c j}{p}\right) \sqrt{2i} \int_{0}^{\bar{\tau}} \frac{f_{c,p}(u)}{\sqrt{u-\tau}} du,
\end{align*}
after we take $u=-1/z$, $\frac{du}{u^2}=dz$. Form the  $(p-1) \times (p-1)$ matrix ${\bf S}(p)$, where $[{\bf S}(p)]_{c,j}=\sqrt{\frac{2}{p}} \sin(\frac{\pi c j}{p})$. 

\begin{thm} \label{qm-1p}
For $w \in \mathbb{H} \cup \overline{\mathbb{H}} \cup \mathbb{Q}$, we have that ${\bf F}(w)$ is a weight $\frac{1}{2}$ vector-valued quantum modular form. In
particular, we have that
\begin{equation}
\sqrt{\frac{1}{ w i}} {\bf F}\left(-\frac{1}{w}\right)-[{\bf S}(p)] {\bf F}(w)=[{\bf S}(p)] g(w),
\end{equation}
where $g(w)=-\sqrt{2i} \int_{0}^{ i \infty} \frac{{f}(u)du}{\sqrt{u-w}}$, $f(u)=(f_{1,p}(u),...,f_{p-1,p}(u))$ and ${\bf S}(p)$ is the $S$-matrix of $A_1^{(1)}$ at level $p-2$.
\end{thm}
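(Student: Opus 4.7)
The plan is to break the claim into three parts according to whether $w$ lies in $\overline{\mathbb{H}}$, in $\mathbb{H}$, or at a rational point. On $\overline{\mathbb{H}}$ we interpret $\mathbf{F}$ as the Eichler integral $\mathbf{F}^*$ and compute its $S$-transform directly from the integral representation. On $\mathbb{Q}$ we transfer the formula to rationals using the all-order agreement of $\mathbf{F}$ and $\mathbf{F}^*$ established in Theorem~4.1 of \cite{BM}. On $\mathbb{H}$ the formula is obtained either by an independent computation from the modular transformation of false theta series (Proposition~\ref{prop:S-trafo}) or by continuity from the boundary.

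For the main computation on $\overline{\mathbb{H}}$, the excerpt has already carried out the substitution $u = -1/z$ in the defining integral for $F_{j,p}^*(-1/\tau)$ and applied the weight $3/2$ transformation of $f_{j,p}$ to obtain
\[
F_{j,p}^*\!\left(-\tfrac{1}{\tau}\right) = -\sqrt{\tfrac{2i\tau}{p}}\sum_{c=1}^{p-1}\sin\!\left(\tfrac{\pi c j}{p}\right)\sqrt{2i}\int_0^{\bar\tau}\frac{f_{c,p}(u)}{\sqrt{u-\tau}}\,du.
\]
The key step now is to split the contour as $\int_0^{\bar\tau} = \int_0^{i\infty} - \int_{\bar\tau}^{i\infty}$. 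The second piece is, by the very definition of the Eichler integral, $F_{c,p}^*(\tau)/\sqrt{2i}$, whereas the first piece produces (after comparison with the definition of $g$) the $c$-th component of $-g(\tau)/\sqrt{2i}$. Using the identity $\sqrt{2i\tau/p}\sin(\pi cj/p) = \sqrt{i\tau}\,[\mathbf{S}(p)]_{c,j}$ and collecting terms yields
\[
\mathbf{F}^*\!\left(-\tfrac{1}{\tau}\right) = \sqrt{i\tau}\,[\mathbf{S}(p)]\bigl(\mathbf{F}^*(\tau) + g(\tau)\bigr),
\]
which is the claimed formula after dividing by $\sqrt{i\tau}$.

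To pass to $\mathbb{Q}$, recall from Theorem~4.1 of \cite{BM} that at every rational $h/k$ the asymptotic expansions of $F_{j,p}(h/k + it)$ as $t \to 0^+$ and of $F_{j,p}^*(h/k - it)$ as $t \to 0^+$ coincide to all orders in $t$. Since $g(w)$ extends analytically across the real line away from $i[0,\infty)$, we may take radial limits from the lower half-plane in the identity just established to obtain the formula at each rational. For $w \in \mathbb{H}$ we apply the analogous argument with $\mathbf{F}$ in place of $\mathbf{F}^*$: the transformation of false theta functions from Proposition~\ref{prop:S-trafo}, specialised to $p_+=1$ and $p_-=p$, produces the same cocycle $[\mathbf{S}(p)]g(w)$, and the matching of the principal parts follows from the identification of $[\mathbf{S}(p)]_{c,j}=\sqrt{2/p}\sin(\pi cj/p)$ with the specialisation of $S_{\lambda+\alpha_0/2}^\epsilon$ at $\epsilon=0$ on the relevant discrete set. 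Finally, Lemma~\ref{su2}, with the shifts $a = c-1$, $b = j-1$, $k = p-2$, identifies $[\mathbf{S}(p)]$ as the $S$-matrix of $A^{(1)}_1$ at level $p-2$.

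The principal technical obstacle is the bookkeeping of branch cuts for $\sqrt{u-\tau}$ and $\sqrt{i\tau}$: one must fix consistent branches so that the contour split through $i\infty$ preserves the correct signs, and so that the two radial limits at each rational match. A secondary difficulty is the rigorous verification of the all-order agreement of $\mathbf{F}$ and $\mathbf{F}^*$ at rationals, which is an Euler--Maclaurin-type calculation already executed in \cite{BM}; once granted, the remainder of the argument is purely algebraic manipulation of trigonometric identities and contour splittings.
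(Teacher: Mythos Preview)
Your treatment of $\overline{\mathbb{H}}$ and of $\mathbb{Q}$ is essentially the paper's argument, carried out in more detail: the contour split $\int_0^{\bar\tau}=\int_0^{i\infty}-\int_{\bar\tau}^{i\infty}$ is exactly what turns the computation preceding the theorem into the displayed identity, and the transfer to $\mathbb{Q}$ via the all-order agreement of $F_{j,p}$ and $F_{j,p}^*$ is exactly what is quoted from \cite{BM}. That part is correct and matches the paper.

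The gap is your handling of the case $w\in\mathbb{H}$. Invoking Proposition~\ref{prop:S-trafo} does not do what you claim. That proposition applies to the \emph{regularised} false theta $F_{b,c}^\epsilon$ with $\epsilon\notin i\mathbb{R}$, so the unregularised $\epsilon=0$ lies outside its hypotheses; more importantly, its output is an integral over Fock characters together with a theta-function correction $X_{b,c}^\epsilon$, not the Eichler-type period integral $[\mathbf{S}(p)]g(w)$. You assert these representations coincide but do not show it, and there is no reason a priori that the $S$-kernel $S_{\lambda+\alpha_0/2}^\epsilon$ at $\epsilon=0$ should specialise to the finite matrix $[\mathbf{S}(p)]$ on any ``relevant discrete set''---the former is a continuous kernel acting on a continuum of Fock modules. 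The paper does not go through Proposition~\ref{prop:S-trafo} at all here. Instead it argues that the period integral $g(w)=-\sqrt{2i}\int_0^{i\infty}f(u)(u-w)^{-1/2}\,du$, a priori holomorphic only for $w\in\overline{\mathbb{H}}$, continues holomorphically to $\mathbb{C}\setminus L$ for any path $L$ from $0$ to $i\infty$ (by deforming the contour of integration), so that the cocycle is real-analytic on $\mathbb{R}\setminus\{0\}$ and smooth on $\mathbb{R}$. That analytic continuation of the cocycle is the content that makes $\mathbf{F}$ a quantum modular form in Zagier's sense. Your alternative phrase ``continuity from the boundary'' points in this direction but does not supply the argument; continuity from a measure-zero subset $\mathbb{Q}$ of the boundary cannot determine a holomorphic function on $\mathbb{H}$---what is needed is the analytic continuation of $g$ itself.
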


\begin{proof} This follows from \cite{BM} and the discussion above. It was already proved in \cite{BM}
that the quantum set of ${\bf F}$ is $\mathbb{Q}$ by computing radial limits
at each point in $\mathbb{Q}$. 
It is also explained in \cite{BM}  and elsewhere that $g_\alpha(w)$, $\alpha
\in \Gamma(1)$, is a smooth function for $\alpha \in \mathbb{R}$. Although $g(w)$
is a priori only defined in $\overline{\mathbb{H}}$, we may take any path $L$ connecting points 0 to $i \infty$. Then we
can holomorphically continue $g(w)$ to all of $\mathbb{C}  \setminus L$. Thus, we obtain a continuation of $g$
which is smooth on $\mathbb{R}$ and analytic on $\mathbb{R} \setminus \{0\}$.
\end{proof}

\subsection{$\SingVOA{p_+,p_-}$-models}

In \cite{BM}, it was shown that all atypical characters are mixed quantum
modular forms with quantum set $\mathbb{Q}$; see also \cite{BCR} where similar
quantities appear. In the spirit of the $\SingVOA{p}$ algebra we consider the vector
spaces spanned by characters of simple modules over the
$\SingVOA{p_+,p_-}$ algebra. Consider its quotient space modulo the subspace
spanned by characters of standard modules \(\FF{\lambda}\) and of the
irreducible atypical modules $\mathcal{I}^\nu_{r,s;n}$ and \(\VirIrr{r,s}\). By the
relations used in Section 5.1 of \cite{BM} and in previous sections, easy
inspection shows that for a spanning set of the quotient space of we can choose 
\begin{align*}
\tilde{\chi}_{r,s}(\tau)&:=\eta(\tau)\ch{\mathcal{I}_{r,s;0}}(\tau)-\left(\sum_{k=0}^\infty
  q^{p_- p_+\left(k+\frac{2p_+p_- +p_+ s+p_- r}{2 p_+ p_-}\right)^2}-
    \sum_{k=0}^\infty q^{p_- p_+\left(k+\frac{2p_+p_- -p_+ s+p_- r}{2 p_+ p_-}\right)^2}\right) \\
&=\quad \sum_{k\geq0}(k+1)\left( q^{p_-p_+\left(k+\frac{2p_+p_- -p_+ s-p_- r}{2 p_+ p_-}\right)^2}+ 
     q^{p_- p_+\left(k+\frac{2p_+p_- + p_+s+p_- r}{2 p_+ p_-}\right)^2} \right) \\
 & \quad \phantom{\sum_{k\geq0}(k+1)}\quad 
 \left. - q^{p_- p_+\left(k+\frac{2p_+p_- -p_+ s+p_- r}{2 p_+ p_-}\right)^2}
     - q^{p_- p_+\left(k+\frac{2p_+p_- +p_+ s-p_- r}{2 p_+ p_-}\right)^2}
   \right),
\end{align*}
where $(r,s)\in\KacT{p_+,p_-}$. This space is precisely $\frac{(p_+-1)(p_- -1)}{2}$-dimensional.

Similarly, we introduce for $\tau \in \bar{\mathbb{H}}$,
\begin{displaymath}
{\bf G}^*(\tau)=\sqrt{2i} \int_{\bar{\tau}}^{i \infty} \frac{{\bf f}(z)}{(z-\tau)^{3/2}} d z,
\end{displaymath}
where ${\bf f}(z)=(...,\eta(z) \ch{\mathcal{L}_{r,s}(z)},...)$,
\((r,s)\in\KacT{p_+,p_-}\) and the 
entries are suitably ordered. 
We also let 
\begin{displaymath}
{\bf G}(\tau)=\left(\ldots, \tilde{\chi}_{r,s}(\tau), \ldots\right),
\quad \tau \in \mathbb{H},
\end{displaymath}
where the \((r,s)\) are ordered as in \({\bf G}^*\).
Now we extend ${\bf G}$ as a function defined on $\mathbb{H} \cup \overline{\mathbb{H}}$, where 
we let ${\bf G}(w):={\bf G}^*(w)$ for $w \in \overline{\mathbb{H}}$.
\begin{thm} \label{32}
For $w \in \mathbb{H} \cup \overline{\mathbb{H}} \cup \mathbb{Q}$, we have that ${\bf G}(w)$ is a weight $\frac{3}{2}$ quantum vector-valued modular form. In
particular, we have that
\begin{equation}
\left(\frac{1}{wi}\right)^{3/2} {\bf G}(-\frac{1}{w})+[{\bf SM}(p_+,p_-)] {\bf G}(w)=[{\bf SM}(p_+,p_-)] g(w),
\end{equation}
where $g(w)=- \sqrt{2i} \int_{0}^{i \infty} \frac{{\bf f}(u)}{(u-w)^{3/2}} d u $.
and $[{\bf SM}(p_+,p_-)]$ is the $S$-matrix of $(p_+,p_-)$ minimal models.
\end{thm}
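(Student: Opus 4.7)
The plan is to mirror the strategy of Theorem \ref{qm-1p} while accounting for the fact that the relevant modular weight is now $\tfrac{3}{2}$ rather than $\tfrac{1}{2}$. The first step is to rewrite each component $\tilde{\chi}_{r,s}(\tau)$ as a linear combination of partial theta functions and their $u$-derivatives at $u=0$, using the identity
\begin{equation*}
\sum_{k\geq 0}(k+1)q^{a(k+\tfrac{b}{2a})^2}=\left(1-\tfrac{b}{2a}\right)\ptheta{a,b}{\tau}+\ptheta{a,b}{\tau}^\prime,
\end{equation*}
applied to each of the four partial-theta blocks appearing in $\tilde{\chi}_{r,s}$. The presence of $\ptheta{a,b}{\tau}^\prime$ is precisely what will promote the transformation law from weight $\tfrac{1}{2}$ to weight $\tfrac{3}{2}$.

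Next, I would compute $\mathbf{G}(-1/w)$ by differentiating the regularised transformation law of Proposition \ref{prop:S-trafo} with respect to $\epsilon$ and then setting $\epsilon=0$. More concretely, since $\ptheta{a,b}{\tau}^\prime$ can be obtained (up to a normalisation) as $-\tfrac{1}{2\pi\sqrt{2a}}\partial_\epsilon$ of the regularised $\regptheta{-\sqrt{2a}\epsilon}{a,b}{\tau}$ at $\epsilon=0$, the modular transformation of $\tilde\chi_{r,s}$ decomposes into a continuous integral piece and a discrete piece. For the integral piece, after rescaling the integration variable and integrating by parts (which converts the weight $\tfrac{1}{2}$ kernel $(z-\tau)^{-1/2}$ into the weight $\tfrac{3}{2}$ kernel $(z-\tau)^{-3/2}$, cf.\ the derivation in \cite{BM}), the continuous contribution matches the Eichler integral $\mathbf{G}^*(w)=\sqrt{2i}\int_{\bar w}^{i\infty}\mathbf{f}(z)(z-\tau)^{-3/2}\,dz$ rotated to the path from $0$ to $i\infty$, producing the term $[{\bf SM}(p_+,p_-)]\,g(w)$ in the claimed identity. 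For the discrete piece, the $\partial_\epsilon|_{\epsilon=0}$ of $\theta_{p_+p_-,m}(i\sqrt{2p_+p_-}\epsilon\tau,\tau)$ introduces a factor proportional to $\partial_u\theta_{p_+p_-,m}(u,\tau)|_{u=0}$, which combines with the sine-weighted sums in $Y^\epsilon_{r,s;n}$ to produce exactly the minimal model characters $\ch{\mathcal{L}_{r',s'}}$ multiplied by the entries $S^{\mathrm{Vir}}_{(r,s),(r',s')}$; this is the $\epsilon \to 0$ mechanism already flagged in Remark \ref{cor:corVirS}.

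The third step is to identify the discrete contribution with $[{\bf SM}(p_+,p_-)]\,\mathbf{G}(w)$. Using the definition of $\eta(\tau)\ch{\mathcal{L}_{r',s'}}$ as a difference of theta functions $\theta_{p_+p_-,r'p_--s'p_+}-\theta_{p_+p_-,r'p_-+s'p_+}$ and the expansion of $\sin(\pi rm/p_+)\sin(\pi sm/p_-)$ into exponentials, a direct rearrangement of sums over $m \in \{0,\ldots,2p_+p_--1\}$ with reindexing $m \mapsto r'p_-\pm s'p_+$ recovers the Virasoro $S$-matrix \eqref{S-matrix} exactly, up to the sign conventions already accounted for in the second identity of the proof of Theorem \ref{S-atypical}. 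Matching the resulting coefficient with the $(r,s)$-indexed entry of $[{\bf SM}(p_+,p_-)]$ completes the identification.

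Finally, the quantum modular property reduces to showing that $g(w)$ extends to a $C^\infty$-function on $\mathbb{R}$ and that $\mathbf{G}(h/k)$ agrees with $\mathbf{G}^*(h/k)$ to all orders at every $h/k \in \mathbb{Q}$. I would handle this exactly as in \cite{BM}: since $\mathbf{f}(z)$ is bounded and decays rapidly on the imaginary axis, the integral defining $g$ converges absolutely and can be deformed to any path from $0$ to $i\infty$ missing $w$, giving a holomorphic continuation to $\mathbb{C}\setminus L$ that is smooth across $\mathbb{R}\setminus\{0\}$; the asymptotic Euler--Maclaurin expansion at cusps, applied to the weight $\tfrac{3}{2}$ setting of \cite{BM}, gives the required agreement of limits. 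The main obstacle will be rigorously pushing through the $\partial_\epsilon|_{\epsilon=0}$ derivative in the continuous part — in particular, justifying the exchange of integration and differentiation and carrying out the integration by parts that converts the weight $\tfrac{1}{2}$ kernel into the weight $\tfrac{3}{2}$ one without picking up unwanted boundary terms — since the resulting kernel is more singular than in Theorem \ref{qm-1p}.
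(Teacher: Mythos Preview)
Your route differs substantially from the paper's, and it has two genuine gaps.

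The paper does not go through the regularised transformation of Proposition~\ref{prop:S-trafo} at all. Instead it parallels Theorem~\ref{qm-1p} directly: one works in the lower half-plane with the Eichler integral $\mathbf{G}^*(\tau)=\sqrt{2i}\int_{\bar\tau}^{i\infty}\mathbf{f}(z)(z-\tau)^{-3/2}\,dz$, performs the substitution $z\mapsto -1/u$ in $\mathbf{G}^*(-1/\tau)$, and uses that the integrand $\mathbf{f}(z)=\eta(z)\,\ch{\mathcal{L}_{r,s}}(z)$ is a weight-$\tfrac12$ vector-valued modular form transforming exactly with the minimal-model $S$-matrix $[\mathbf{SM}(p_+,p_-)]$. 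Splitting the resulting contour at $0$ gives the claimed identity immediately. The matching of radial limits at rationals is then obtained by decomposing $\mathbf{G}^*$ into four Eichler integrals of partial-theta type and invoking \cite{BM,BCR}; real analyticity of $g$ is handled as in Theorem~\ref{qm-1p}.

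Your proposal instead tries to obtain the upper-half-plane transformation of $\tilde\chi_{r,s}$ from Proposition~\ref{prop:S-trafo} by differentiating in $\epsilon$ at $0$. The first gap is in your identification of the continuous part with $[\mathbf{SM}]g(w)$. The continuous term in Proposition~\ref{prop:S-trafo} is a Gaussian-type integral over the Fock label $x\in\mathbb{R}$, namely $\int_{\mathbb{R}}(\cdots)\,q^{x^2/2}\,dx$; there is no kernel $(z-\tau)^{-1/2}$ present to be ``integrated by parts'' into $(z-\tau)^{-3/2}$. Passing from an $x$-integral of this form to the $\tau$-contour Eichler integral $g(w)$ is not an integration by parts, and you have supplied no identity that bridges the two.

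The second gap is in your step~3. The discrete correction $Y^\epsilon_{r,s;0}$ and its $\epsilon$-derivatives produce combinations of minimal-model characters $\ch{\mathcal{L}_{r',s'}}$ (or, after one $\partial_\epsilon$, terms of the form $\tau\cdot\theta'_{p_+p_-,m}(0,\tau)$), weighted by $S^{\mathrm{Vir}}$; this is the content of Remark~\ref{cor:corVirS}. But $\mathbf{G}(w)=\tilde\chi_{r',s'}(w)$ in $\mathbb{H}$ is the false-theta combination, not $\eta\,\ch{\mathcal{L}_{r',s'}}$ and not $\tau\theta'$. So the discrete piece does not equal $[\mathbf{SM}]\mathbf{G}(w)$, and the claimed decomposition of $(wi)^{-3/2}\mathbf{G}(-1/w)$ into $[\mathbf{SM}]g(w)-[\mathbf{SM}]\mathbf{G}(w)$ does not follow from the regularised formula in the way you describe.
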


\begin{proof} It was  already proven in \cite{BM} that each component of ${\bf G}(z)$ is a quantum modular form. 
In particular, to prove that radial limits of ${\bf G}(z)$ and ${\bf G}^*(z)$
agree as $z$ approaches a rational value from each side, it is  
sufficient to decompose ${\bf G}^*(\tau)$  into a sum of 4 Eichler integrals
associated to partial thetas and to use results obtained in    
\cite{BCR} to compute these limits which are expressed in terms of
$L$-function values. The remaining computations follow as in the previous section by
using modular transformation properties of Eichler's integrals and those of the characters
of the minimal models. Real analyticity is handled as in Theorem \ref{qm-1p}. 

\end{proof}

\begin{remark} 
Of course, we could have worked with the ``more natural'' basis $\ch{\mathcal{I}_{r,s;0}}(\tau)$ instead of $\tilde{\chi}_{r,s}(\tau)$. But it turns out that $\eta(\tau) \ch{\mathcal{I}_{r,s;0}}(\tau)$ is a quantum modular 
form of mixed weight $\frac{3}{2}$ and $\frac{1}{2}$, so the formulation of
Theorem \ref{32} would be significantly messier.

We should also remark  that Theorem \ref{32} was independently discovered in
\cite{torus} in the context of quantum knot invariants.
\end{remark}

\hspace*{1cm}

\noindent Department of Mathematical and Statistical Sciences, University of Alberta,
Edmonton, Alberta  T6G 2G1, Canada. 
\emph{email: creutzig@ualberta.ca}

\hspace*{1cm}

\noindent Department of Mathematics and Statistics, SUNY-Albany, 1400 Washington Avenue, Albany, NY 12222, USA.
\emph{email: amilas@albany.edu}

\hspace*{1cm}

\noindent Department of Theoretical Physics, Research School of Physics and
Engineering; and Mathematical Sciences Institute; 
The Australian National University, Acton, ACT 2601, Australia. 
\emph{email:simon.wood@anu.edu.au}


\begin{thebibliography}{FGST2}

\bibitem[AM1]{AdM0} D. Adamovic and A. Milas, {Logarithmic intertwining operators and $W(2,2p-1)$-algebras}, J. Math. Phys., \textbf{48} (2007), p. 073503.


\bibitem[AM2]{AdM1}D. Adamovic and A. Milas, On $W$-algebras associated to $(2,p)$ minimal models and, International Mathematics Research Notices (2010) 20, 3896-3934.



\bibitem[AM3]{AdM2} D. Adamovic and A.Milas, On $W$-algebra extensions of $(2,p)$ minimal models: $p > 3$ ,
{\em Journal of Algebra}  {\bf 344} (2011) 313-332.

\bibitem[AC]{AC} C. Alfes and T. Creutzig,  The mock modular data of a family of superalgebras, Proc. Amer. Math. Soc. 142 (2014), 2265-2280

\bibitem[AP]{AP} H. H. Anderson and J. Paradowski, Fusion categories arising from semisimple Lie algebras, {\em Comm. Math. Physics} {\bf 169}  (1995),  563-588.

\bibitem[BM1]{BM} K. Bringmann and A.Milas, $W$-algebras, False Theta Series and Quantum Modular forms,  I, {\em I.M.R.N.}, (2015), doi:10.1093/imrn/rnv033.

\bibitem[BM2]{BM2} K. Bringmann and A.Milas, $W$-algebras, False Theta Series and Quantum Modular forms, Part II, preprint.

\bibitem[BCR]{BCR}  K. Bringmann, T. Creutzig and L. Rolen, Negative index Jacobi forms and quantum modular forms, Research in the Mathematical Sciences, 1 (2014), 1:11.

\bibitem[BCGP]{BCGP}  C. Blanchet, F.  Costantino, N.  Geer, and B. Patureau-Mirand,  Non semi-simple TQFTs, Reidemeister torsion and Kashaev's invariants, 
{\tt arXiv:1404.7289}

\bibitem[BR]{BR} P. Bouwknegt and D. Ridout, Presentations of
  Wess-Zumino-Witten Fusion Rings,  {\em Rev. Math. Phys.} {\bf 18} 201 (2006).

\bibitem[CGP]{CGP} F.  Costantino, N.  Geer, and B. Patureau-Mirand, Some remarks on the unrolled quantum group of $sl(2)$, J. Pure Appl. Algebra 219 (2015) 3238-3262.

\bibitem[CM1]{CM} T. Creutzig and A.Milas,  False Theta Functions and the Verlinde formula, {\em Advances in Mathematics} {\bf 520} (2014), 520-545.
 
\bibitem[CM2]{CM2}  T. Creutzig and A.Milas, in preparation.
 
\bibitem[CR1]{CR1} T. Creutzig and D. Ridout, Modular Data and Verlinde Formulae for Fractional Level WZW Models I,
{\em Nucl. Phys. B} {\bf 865} (2012) 83  
  
\bibitem[CR2]{CR2} T. Creutzig and D. Ridout, Modular Data and Verlinde Formulae for Fractional Level WZW Models II,
 {\em Nucl. Phys. B} {\bf 875} (2013) 423.  

\bibitem[CR3]{CR3}
  T. Creutzig and D. Ridout, Logarithmic Conformal Field Theory: Beyond an Introduction, J. Phys. A 46 (2013), no. 49, 494006.
  
\bibitem[CRW]{CRW} T. Creutzig, D. Ridout and S. Wood, Coset Constructions of Logarithmic (1, p) Models,
  Lett.\ Math.\ Phys.\  {\bf 104} (2014) 553.  
  
\bibitem[DJX]{DJX} C. Dong, X. Jiao and F. Xu, Quantum dimensions and quantum Galois theory, {\em Trans Amer Math Soc} 365 (2013) 12, 6441--6469.

\bibitem[DV]{DV} R. Dijkgraaf and E. Verlinde, Modular Invariance and the Fusion Algebra, {\em Nuclear Physics B} (Proc. Suppl.) 5B (1988) 87-97.   
  
\bibitem[Fel]{Fel} G. Felder, BRST approach to minimal models, {\em Nucl Phys}
  B {\bf 317} (1989), 215–236


\bibitem[FHST]{FHST} J. Fuchs, S. Hwang, A.M. Semikhatov, and I. Yu. Tipunin,
Nonsemisimple Fusion Algebras and the Verlinde Formula, {\em Comm.
Math. Phys.} {\bf 247} (2004), no. 3, 713--742.
 

\bibitem[FGST1]{FGST1} B.L. Feigin, A.M. Gainutdinov, A.M. Semikhatov and
  I.Yu. Tipunin, Logarithmic extensions of minimal models: Characters and
  modular transformations, {\em Nucl Phys} B {\bf 757} (2006), 303–343

\bibitem[FGST2]{FGST2} B.L. Feigin, A.M. Gainutdinov, A.M. Semikhatov and
  I.Yu. Tipunin, Kazhdan-Lusztig-dual quantum group for logarithimic
  extensions of Virasoro minimal models, {\em J Math Phys} {\bf 48} (2007),
  032303


\bibitem[GK]{GK} M.R. Gaberdiel and H.G. Kausch,  A Rational Logarithmic
  Conformal Field Theory, {\em Phys.Lett.} B {\bf 386} (1996) 131-137.
  
\bibitem[Ge]{Ge} D. Gepner, Fusion rings and geometry, {\em Comm. Math. Phys.} {\bf 141}  (1991), 381-411.
  
  
\bibitem[HK]{torus} K. Hikami and A. Kirillov, Torus Knot and Minimal Model, {\em Phys.Lett.} B {\bf 575} (2003) 343-348
  
  
  \bibitem[Hu1]{Hu}  Y.-Z. Huang, Vertex operator algebra and the Verlinde conjecture, Commun. Contemp. Math., {\bf 10} (2008), 108.

\bibitem[Hu2]{Hu2} Y.-Z. Huang, Rigidity and modularity of vertex tensor categories, Comm. Contemp.
Math. 10 (2008), 871--911.  

  \bibitem[IK]{IK} K. Iohara and  Y. Koga, Representation theory of the Virasoro algebra,
Springer Monographs in Mathematics, Springer-Verlag (2011).



\bibitem[KW1]{KW1} V. Kac and M. Wakimoto,  Integrable highest weight modules over affine superalgebras
and Appell's function, {\em Comm. Math. Phys.}  {\bf 215} (2001) 631--682.

\bibitem[KW2]{KW2} V. Kac and M. Wakimoto, Modular invariant representations of infinite-dimensional
  {L}ie algebras and superalgebras, {\em Proc. Nat. Acad. Sci. } {\bf 85} (1988) 4956--4960.

\bibitem[Kau]{Kau} H.G. Kausch, Curiosities at $c=-2$, preprint DAMTP 95-52, {\tt arXiv:hep-th/9510149}.
 
\bibitem[M]{M} A.Milas, Characters of modules of irrational vertex algebras, {\em Proceedings of the Conference on Vertex Algebras and Automorphic Forms},  Heidelberg, 2011 {\bf 8} MATCH, Springer, (2014).
  
\bibitem[Miy]{Miy}  M. Miyamoto,   {\em Modular invariance of vertex operator algebras satisfying $C\sb
2$-cofiniteness}. {Duke Math. J.} {\bf 122} (2004), 51--91.
  
\bibitem[MS]{MS} G. W. Moore and N. Seiberg, Classical and Quantum Conformal Field Theory, {\em Commun Math Phys}  {\bf 123} (1989) 177.

\bibitem[NT]{NT} K. Nagatomo and A. Tsuchiya, The triplet vertex operator
  algebra W(p) and the restricted quantum group at root of unity, {\em
    Advanced Studies in Pure Mathematics 61: Exploring New Structures and Natural
    Constructions in Mathematical Physics}, 
  American Mathematical Society (2011), 1-49.

\bibitem[O]{special}  F. W. J. Olver,  {\em Asymptotics and Special Functions}, Academic Press, New
York, 1974.

  
\bibitem[RW]{RW} D. Ridout and S. Wood, Modular Transformations and Verlinde Formulae for Logarithmic $(p_+,p_-)$-Models, {\em Nucl Phys} B {\bf 880} (2014), 175-202.

\bibitem[SS]{SS} N. Seiberg and D. Shih, Branes, rings and matrix models in minimal (super)string theory, JHEP {\bf 0402} (2004) 021.

\bibitem[T]{T} V. G.. Turaev, Quantum invariants of knots and 3-manifolds, de Gruyter Studies in Mathematics 18, Second revised edition, 
Walter de Gruyter \& Co., Berlin (2010) xii+592.

\bibitem[TW]{TW} A. Tsuchiya and S. Wood, On the extended W-algebra of type $\mathfrak{sl}_2$ at positive rational level, Int. Math. Res. Not. 2015 (14), 5357-5435.

\bibitem[V]{V} E. P. Verlinde, Fusion Rules and Modular Transformations in 2D Conformal Field Theory, {\em Nucl Phys} B {\bf 300} (1988) 360.

\bibitem[Za]{Za} D. Zagier, Quantum modular forms, In Quanta of Maths: Conference in honor of Alain Connes, Clay Mathematics Proceedings 11, AMS and Clay Mathematics Institute 2010, 659-675.

 

\bibitem[Z]{Z} Y. Zhu, Modular invariance of characters of vertex operator algebras, {\em J Amer Math Soc} 9 (1996) 237--302.

\end{thebibliography}
\end{document}